\def\a{\alpha}
\def\l{\lambda}
\def\g{\gamma}
\def\0{\bar{0}}
\def\e{\epsilon}
\def\d{\delta}
\def\g{\mathfrak{g}}
\newtheorem{lemma}{Lemma}[section]
\newtheorem{theorem}[lemma]{Theorem}
\newtheorem{proposition}[lemma]{Proposition}
\newtheorem{definition}[lemma]{Definition}
\newtheorem{corollary}[lemma]{Corollary}
\title{\bf  On simple modules for the  restricted Lie superalgebra $gl(m|n)$}
\author{Chaowen Zhang \\College of Sciences,\\
 China University of Mining and Technology, \\Xuzhou 221008, China}
\date{ }
\begin{document}
\maketitle

\begin{abstract}  In this paper, we study the simple modules for the restricted Lie superalgebra
$gl(m|n)$. A condition for the simplicity of the induced modules is
given, and an analogue of Kac-Weisfeiler theorem is proved.
\end{abstract}

{\it  Mathematics Subject Classification (2000)}: 17B50; 17B10.

\smallskip\bigskip

\section{Introduction}

The notion Lie superalgebras was defined by Kac in \cite{k1}.  It is
a generalization of Lie algebras.  Let $\mathbf F$ be an
algebraically closed field with  char $\mathbf{F}=p>0$. Lie
superalgebras over $\mathbf F$ were studied in \cite{kl} and
\cite{p}. Also the notion of restricted Lie superalgebras was given.
\par  Let $\g=\g_{\0}\oplus \g_{\bar 1}$ be a restricted Lie
superalgebra over $\mathbf F$. Then  the p-characters were
introduced for the $\g$-modules in \cite{zl}.   It is simply a
linear function on the even part $\g_{\0}$. Each simple $\g$-module
has a p-character $\chi$(\cite{zl}).
\par
 The present paper is addressing the following
questions: \par 1. What is the possible maximal dimension of each
simple $u(\mathfrak{g},\chi)$-module?\par 2. How is a simple
$u(\mathfrak{g},\chi)$-module  related to its simple
$u(\mathfrak{g}_{\bar{0}},\chi)$-submodules? \par 3. In studying a
simple $u(\mathfrak{g},\chi)$-module, can its p-character $\chi$, as
in the case of classical Lie algebras, be reduced to a nilpotent
one?
\par
 The
paper is organized as follows: Section 2 gives the preliminaries. In
Section 3, we work on Question 1 and study the condition for the
simple $u(\g,\chi)$-module to attain the maximal possible dimension.
Section 4 deals with Question 2. We study the simple modules for
$gl(m|n)$ and determine the condition for the {\it graded baby verma
module} to be simple.\par In  Section 5  we work on Questions 3. We
have proved a version of the Kac-Weisfeiler Theorem for
$\g=gl(m|n)$: Each simple $u(\g,\chi)$-module is induced by a simple
$u(\mathfrak{l},\chi)-submodule$, where $\chi_{|\mathfrak{l}}$ is
nilpotent. Then we discuss the properties of $Z^{\chi}(\l)$ when
$\chi$ is in the standard Levi form.\par After the first submission
of the present paper, I received from Weiqiang Wang
 their independent work on modular representations of
Lie superalgebras: {\sl Representations of Lie superalgebras in
prime characteristic I}. I would like to thank him for the comments.
\section{Preliminaries}
\subsection{Basic definitions} \noindent Let $\mathbf{F}$ be an
algebraically closed field with char $\mathbf{F}=p>0$. Assume
$\mathbf{Z}_{2}=\mathbf{Z}/2\mathbf{Z}=\{\overline{0},
\overline{1}\}$.
  Let $V=V_{\overline{0}}\oplus V_{\overline{1}}$ be a $\mathbf{Z}_{2}$-graded
vector space over $\mathbf{F}$. We denote by $\mathrm{p}(a)=\theta$
the parity of a homogeneous element $a\in V_{\theta}, \theta\in
\mathbf{Z}_{2}.$ We assume throughout that the symbol
$\mathrm{p}(x)$ implies that $x$ is $\mathbf{Z}_2$-homogeneous.

 A \textit{superalgebra} \label{superalgebra} is a $\mathbf{Z}_{2}$-graded vector
space $\mathcal{A}=\mathcal{A}_{\overline{0}}\oplus
\mathcal{A}_{\overline{1}}$ endowed with an algebra structure such
that $\mathcal{A}_{\theta}\mathcal{A}_{\mu}\subset
\mathcal{A}_{\theta+\mu} $ for all $\theta,\mu\in \mathbf{Z}_{2}.$
 A superalgebra  $\mathfrak{g}=\mathfrak{g}_{\overline{0}}\oplus \mathfrak{g}_{\overline{1}}$
 over $\mathbf{F}$  is called a {Lie superalgebra}
provided that
\begin{enumerate}
 \item[$\mathrm{(i)}$]

  $[a,b]=-(-1)^{\mathrm{p}(a)\mathrm{p}(b)}[ b,a],$

\item[$\mathrm{(ii)}$] $[a,[b,c]]=[[a,b],c]+(-1)^{\mathrm{p}(a)\mathrm{p}(b)}[b,[a,c]],$
\end{enumerate}
 for  $a,b\in \mathfrak{g}_{\overline{0}}\cup \mathfrak{g}_{\overline{1}}, c\in \mathfrak{g}.$

Let $\mathfrak{g}=\mathfrak{g}_{\overline{0}}\oplus
\mathfrak{g}_{\overline{1}} $ be a Lie superalgebra. Then the even
part $\mathfrak{g}_{\overline{0}}$ is a Lie algebra and the odd part
$\mathfrak{g}_{\overline{1}}$ is a
$\mathfrak{g}_{\overline{0}}$-module
 under the adjoint action. Note that in the case {char} $\mathbf{F}=2,$ a Lie
superalgebra is a $\mathbf{Z}_{2}$-graded Lie algebra. Thus one
usually adopts the convention that $\mathrm{char}\ \mathbf{F}=p>2 $
in the  modular case.

Let $V=V_{\overline{0}}\oplus V_{\overline{1}} $ be a
$\mathbf{Z}_{2}$-graded vector space, $\dim V_{\overline{0}}=m,$
$\dim V_{\overline{1}}=n.$ The algebra
$\mathrm{End}_{\mathbf{F}}(V)$
  becomes an associative superalgebra if one
defines
$$\mathrm{End}_{\mathbf{F}} (V)_{\theta }
:=\{A\in \mathrm{End}_{\mathbf{F}} (V)\mid A (V_{\mu})\subset
V_{\theta +\mu}, \mu\in \mathbf{Z}_{2}\}
$$
for  $\theta\in \mathbf{Z}_{2}.$ On the vector superspace
$\mathrm{End}_{\mathbf{F}} (V) =\mathrm{End}_{\mathbf{F}}
(V)_{\overline{0}}\oplus \mathrm{End}_{\mathbf{F}}
(V)_{\overline{1}}$, we define a new multiplication $[\ ,\ ]$ by
$$ [A,B]:=AB-(-1)^{\mathrm{p}(A)\mathrm{p}(B)}BA
\quad \mbox{for  }\ A, B\in \mathrm{End}_{\mathbf{F}} (V).$$ This
superalgebra endowed with the new multiplication is a Lie
superalgebra, denoted by
 $$ {gl}(V)={gl}_{\overline{0}}(V)\oplus {gl}_{\overline{1}}(V)
 \mbox{or}\quad
 gl(m|n)=gl_{\overline{0}}(m|n)\oplus gl_{\overline{1}}(m|n).$$ For
  \[A=\begin{pmatrix}
 \alpha & \beta\\
 \gamma & \delta
       \end{pmatrix}\in gl(m|n),\] define the supertrace:
$$\mathrm{str}(A)=\mathrm{tr}(\alpha)-\mathrm{tr}(\delta).$$ Then
$$sl(m|n):=\{A\in gl(m|n)\mid\mathrm{str}(A)=0\}$$ is an ideal in
$gl(m|n)$ of codimension 1.  One can find further information on Lie
superalgebras in \cite{k1,k2,sc}.

A Lie superalgebra $\mathfrak{g}=\mathfrak{g}_{\overline{0}}\oplus
\mathfrak{g}_{\overline{1}} $ is called restricted if
$\mathfrak{g}_{\overline{0}}$ is a restricted Lie algebra and if
$\mathfrak{g}_{\overline{1}}$ is a restricted
$\mathfrak{g}_{\overline{0}}$-module (see \cite{kl,p}). Let
$\mathfrak{g}=\mathfrak{g}_{\overline{0}}\oplus
\mathfrak{g}_{\overline{1}}$ be a restricted Lie superalgebra. The
$p$-mapping $[p]:\mathfrak{g}_{\overline{0}}\rightarrow
\mathfrak{g}_{\overline{0}} $ is also called the $p$-mapping of  the
Lie superalgebra $\mathfrak{g}$. Note that $gl(m|n)$ is restricted
with the usual $p$-mapping ($p$-th power taken in
$\mathrm{End}(V)$).

\subsection{General properties }

Let $\g=\g_{\bar{0}}\oplus \g_{\bar{1}}$ be a Lie superalgebra and
$V=V_{\bar{0}}\oplus V_{\bar 1}$  a $\mathbf{Z}_2$-graded vector
space. If there is an even homomorphism $\rho$ from $\g$ to
$gl(V)$(by "even" we mean that $\rho(\g_{\bar i})\subseteq
gl(V)_{\bar i}$, $\bar i\in \mathbf Z_2$), then we call $V$ a
$\g$-module. A $\g$-module is called simple if it does not have any
proper $\mathbf Z_2$-graded submodules.

\begin{lemma}\cite{zl}
(1) Let $\mathfrak{g}$ be a restricted Lie superalgebra and $M$ a
simple $\mathfrak{g}$-module. Then there is a unique
$\chi\in\mathfrak{g}_{\overline{0}}^{\ast}$ such that
$(x^{p}-x^{[p]}-\chi(x)^p \cdot  1)    M=0$ for all $x\in
\mathfrak{g}_{\overline{0}}.$\par
 (2) Let $I$ be a finite dimensional $\mathbf Z_2$-graded
ideal of a Lie superalgebra $\g$ and $M$ a simple $\g$-module. If
$\rho(x)$ is nilpotent for all $x\in I$, then $IM=0$.
\end{lemma}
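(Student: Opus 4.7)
For part~(1), I would first observe that for every $x \in \g_{\0}$ the element $z_x := x^p - x^{[p]}$ is central in $U(\g)$. On $\g_{\0}$ this is the standard property of restricted Lie algebras, and on $\g_{\bar 1}$ it follows from the fact that $\g_{\bar 1}$ is a restricted $\g_{\0}$-module: the identity $(\mathrm{ad}\,x)^{p} = \mathrm{ad}(x^{[p]})$ on $\g_{\bar 1}$ translates into $[x^{p},y] = [x^{[p]},y]$ for all $y \in \g_{\bar 1}$. Since $\mathbf F$ is algebraically closed and $M$ is simple in the $\mathbf Z_{2}$-graded sense, the graded Schur lemma forces any even central element to act on $M$ by a scalar; write $z_{x}m = \psi(x)\cdot m$. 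Jacobson's formula together with the noncommutative binomial expansion in characteristic~$p$ yields $z_{x+y} = z_{x} + z_{y}$, and $z_{\lambda x} = \lambda^{p} z_{x}$ is immediate, so $\psi$ is additive and $p$-homogeneous. Then $\chi(x) := \psi(x)^{1/p}$, well defined because $\mathbf F$ is perfect, is the required $\mathbf F$-linear map, and uniqueness is clear from injectivity of the Frobenius map on $\mathbf F$.

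For part~(2), I would set $M^{I} := \{m \in M : Im = 0\}$, a $\mathbf Z_{2}$-graded subspace because $I$ is graded. The key observation is that $M^{I}$ is $\g$-stable: for homogeneous $x \in I$, $y \in \g$, and $m \in M^{I}$, the supercommutator relation gives
\[ x(ym) = (-1)^{\mathrm{p}(x)\mathrm{p}(y)} y(xm) + [x,y]\,m = 0, \]
using that $[x,y] \in I$. Simplicity of $M$ then reduces the lemma to the statement $M^{I} \neq 0$.

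To produce a common null vector I would invoke a super-analogue of Engel's theorem for the finite-dimensional graded image $\rho(I) \subset gl(M)$, every homogeneous element of which acts nilpotently by hypothesis. An induction on $\dim I$ is natural here: the one-dimensional base case is handled either by the standard fact that a single nilpotent endomorphism has nonzero kernel (when $I = I_{\0}$), or by noting that an odd generator $x$ satisfies $[x,x] \in I_{\0} = 0$ whence $\rho(x)^{2} = \tfrac{1}{2}\rho([x,x]) = 0$, so a nonzero element of $\mathrm{im}\,\rho(x)$ lies in $\ker\rho(x)$. For the inductive step one selects a graded ideal $J \subsetneq I$ of supercodimension one, invokes the induction hypothesis to obtain $M^{J} \neq 0$, verifies that $M^{J}$ is $I$-stable because $J$ is an ideal of $I$, and finishes by finding a null vector for a remaining generator of $I/J$ acting nilpotently on $M^{J}$. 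Producing a supercodimension-one graded ideal cleanly—especially in the case where $I$ is generated by its odd part—is the main technical point I would expect to have to work out carefully.
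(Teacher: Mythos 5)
The paper does not prove this lemma; it cites \cite{zl}, so there is no internal argument to compare against. On its own terms, your treatment of part~(1) is the standard one and is correct: centrality of $z_x=x^p-x^{[p]}$ in $U(\g)$ (with the supercommutator against $\g_{\bar 1}$ handled by restrictedness of $\g_{\bar 1}$ as a $\g_{\0}$-module), graded Schur, Jacobson's formula for additivity of $x\mapsto z_x$, and extraction of a $p$-th root over the perfect field $\mathbf F$. Your reduction in part~(2) is also sound: $M^I$ is a $\mathbf Z_2$-graded $\g$-submodule (the computation with $[x,y]\in I$ is exactly right), so simplicity reduces everything to the super Engel statement $M^I\neq 0$.

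The step you flag as delicate is indeed where the naive plan fails, and not merely for lack of polish. Starting from a maximal proper graded subalgebra $J\subsetneq I$ and a homogeneous normalizing vector $x$ with $[J,x]\subseteq J$, the enlargement $J+\mathbf Fx$ is a graded subalgebra only when $x$ is even; for odd $x$ one must pass to $J+\mathbf Fx+\mathbf F[x,x]$ because $[x,x]$ need not lie in $J$. So supercodimension one cannot always be arranged, and an induction predicated on it has a gap. The repair is to settle for codimension at most two: maximality of $J$ gives $I=J+\mathbf Fx$ or $I=J+\mathbf Fx+\mathbf F[x,x]$; in either case $J$ is a graded \emph{ideal} of $I$ (Jacobi gives $[[x,x],J]\subseteq J$ from $[x,J]\subseteq J$), so $M^J$ is $I$-stable, $M^J\neq 0$ by the dimension induction, and a null vector $v$ for the nilpotent operator $\rho(x)|_{M^J}$ exists; when $x$ is odd that same $v$ is automatically killed by $[x,x]$ because $\rho([x,x])=2\rho(x)^2$, so $v\in M^I$. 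Beware also the tempting shortcut of first passing to $M^{I_{\0}}$ via classical Engel and then treating $I_{\bar 1}$ as anticommuting square-zero operators: $I_{\0}$ is \emph{not} an ideal of $I$ (one has $[I_{\0},I_{\bar 1}]\subseteq I_{\bar 1}$), so $M^{I_{\0}}$ need not be $I$-stable, and that route does not close.
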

Let $\mathfrak{g}$ be a restricted Lie sueralgebra and $U(\g)$ its
universal enveloping algebra. For each $\chi\in
\mathfrak{g}_{\0}^*$, define the $\chi$-reduced enveloping algebra
of $\mathfrak{g}$ by
$u(\mathfrak{g},\chi)=U({\mathfrak{g}})/I_{\chi}$, where $I_{\chi}$
is the $\mathbf{Z}_2$-graded two sided ideal of $U({\mathfrak{g}})$
generated by elements $\{x^p-x^{[p]}-\chi(x)^p1|x\in
\mathfrak{g}_{\overline{0}}\}$. When $\chi=0,$ $u(\mathfrak{g},0)$
is called the restricted universal enveloping algebra of
$\mathfrak{g}$ and simply denoted  by $u(\g)$.

Let $(\mathfrak{g},[p]) $ be a restricted Lie superalgebra. Suppose
that $ u_{1},\ldots, u_{m} $ and $v_{1},\ldots,v_{n}$ are ordered
bases of $\mathfrak{g}_{\overline{0}}$ and
$\mathfrak{g}_{\overline{1}}$ respectively.  Applying a similar
arguments as that for the modular Lie algebra case, we have that
$u(\mathfrak{g},\chi)$ has the following PBW-basis:
$$ \{ v_{1}^{b(1)}\cdots v_n^{b(n)}u_1^{a(1)}\cdots u_m^{a(m)} \mid   0\leq a(i)\leq p-1;
 b(j)=0 \ \mbox{or}\ 1 \}.$$

 A $\mathfrak{g}$-module $M$ is called  having $p$-character
$\chi\in \mathfrak{g}^*_{\overline{0}}$
 provided that
$$x^{p}\cdot m-x^{[p]}\cdot m=\chi (x) ^{p}\ m\quad \mbox{for  all}\,\, x\in
\mathfrak{g}_{\overline{0}},\, m\in M.
$$ Clearly, $u(\mathfrak{g},\chi)$-modules
may be identified with  $\mathfrak{g}$-modules having character
$\chi.$

Let $\Phi=\Phi^+\cup \Phi^-$ be the root system of $\g$. We also
write $\Phi=\Phi_0\cup \Phi_1$, where $\Phi_0$ and $\Phi_1$ are the
set of even and odd roots respectively(\cite{k1}). We denote
$$
\mathfrak{N}^+=\oplus_{\alpha\in \Phi^+} \g_{\alpha},
\mathfrak{N}^-=\oplus_{\alpha\in \Phi^+} \g_{-\alpha}\quad
\text{and}\quad  \mathfrak{B}={H}+\mathfrak{N}^+.$$ As a subalgebra,
$\g$ inherits a natural $p$-mapping from $gl(m|n)$. We denote
$$\rm{Aut}(\mathfrak{g})=\{\theta|\theta \text{ is an even automorphism
of}\quad \g, \theta(x^{[p]})=\theta(x)^{[p]}\quad \text{for
any}\quad x\in \g_{\0}\}.$$

Let $\g$ be one of the following classical Lie superalgebras
\cite[Prop. 2.1.2]{k1}:
$$
\begin{pmatrix}
\g& gl(m|n) & B(m|n) & C_n \\
\g_{\0}& \mathrm{gl}(m)\oplus \mathrm{gl}(n)&\mathrm{B}_m\oplus
\mathrm{C}_n&\mathrm{C}_{n-1}\oplus
\mathbf{F}\\
(m,n)& (m,n)\neq (1,1)&n\geq 2&n\geq 3
\end{pmatrix}
$$
$$
\begin{pmatrix} \g& D(m|n)& F(4) &G(3)\\
   \g_{\0}& \mathrm{D}_m\oplus \mathrm{C}_n&\mathrm{B}_3\oplus \mathrm{A}_1&\mathrm{G}_2\oplus \mathrm{A}_1\\
   (m,n)&m\geq 4,n\geq 2&--&--
\end{pmatrix}
$$
 By
\cite[Prop. 2.5.5]{k1}, we have $\text{dim}\g_{\a}=1$ for any $\a\in
\Phi$. Let
$$ \g'_{\0}=\begin{cases}
C_{n-1}, &\text{if $\g=C_n$}\\
\g_{\0},&\text{otherwise.} \end{cases}$$ Let $G$ be the subgroup of
$\text{Aut}(\g'_{\0})$ such that $Lie(G)=\g'_{\0}$. In particular,
we take $G=GL(m)\times GL(n)$ for $gl(m|n)$. If $\g\neq gl(m|n)$,
let $G$  be the Chevalley group of $\g'_{\0}$.  Each element of $G$
can be  extended naturally to an even automorphism of $\g$. So we
identify $G$ with the subgroup of $\text{Aut}(\g)$ consisting of
elements extended from those of $G$.  By \cite[p.14]{j}, our
restriction on $(m,n)$ ensures that there is a non-degenerate
$G$-invariant bilinear form on $\g'_{\0}$ if we assume $p>3$.\par
 Let
$\chi\in \mathfrak{g_{\0}}^{\ast}$. For the convenience, we
sometimes regard  $\chi$ as a linear function on $\g$ by letting
$\chi(\g_{\bar{1}})=0$.
\begin{lemma}\cite[p.14]{j}
With the assumption on $p$ as above,  let $\g$ be one of the Lie
superalgebras listed. Each $\chi\in \g_{\0}^*$ is conjugate under
$G$ to an element $\chi'\in \g^*_{\0}$ with
$\chi'(\mathfrak{N}^+)=0$.
\end{lemma}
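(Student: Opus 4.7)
The plan is to reduce the statement from $\g_{\0}^{\ast}$ to $\g'_{\0}$ via the invariant form and then apply the standard conjugacy machinery for the reductive group $G$. Using the non-degenerate $G$-invariant bilinear form $\kappa$ on $\g'_{\0}$ guaranteed by the hypothesis $p>3$, I would identify $(\g'_{\0})^{\ast}\cong\g'_{\0}$ as $G$-modules and let $x\in\g'_{\0}$ correspond to $\chi|_{\g'_{\0}}$. When $\g=C_n$, the one-dimensional central complement of $\g'_{\0}$ in $\g_{\0}$ is $G$-fixed and lies outside $\mathfrak{N}^+$, so it contributes nothing to the condition $\chi(\mathfrak{N}^+)=0$ and may be set aside. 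From the $H$-invariance of $\kappa$, the usual root-space calculation gives $\kappa(\g_{\alpha},\g_{\beta})=0$ unless $\alpha+\beta=0$ and $\kappa(\g_{\alpha},H)=0$, so $\mathfrak{N}^+$ is orthogonal to $H\oplus\mathfrak{N}^+$. Hence the desired condition $\chi'(\mathfrak{N}^+)=0$ translates into: the corresponding element of $\g'_{\0}$ should lie in $H\oplus\mathfrak{N}^-$.

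It therefore suffices to show that every $x\in\g'_{\0}$ is $G$-conjugate to an element of $H\oplus\mathfrak{N}^-$. I would take the Jordan decomposition $x=x_s+x_n$ with $[x_s,x_n]=0$ and use conjugacy of maximal tori in $G$ to move $x_s$ into $H$ by some $g\in G$. After this step $x_n$ sits in the Levi subalgebra $\mathfrak{z}_{\g'_{\0}}(x_s)$, which inherits a triangular decomposition from that of $\g'_{\0}$. A further conjugation by an element of the Levi subgroup $Z_G(x_s)^{0}$ (which fixes $x_s$) carries the nilpotent $x_n$ into the nilpotent radical of the opposite Borel of this Levi, hence into $\mathfrak{N}^-$. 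The resulting conjugate of $x$ lies in $H\oplus\mathfrak{N}^-$, as desired.

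The main obstacle is to justify that each of the classical structural inputs in the above argument, namely Jordan decomposition, conjugacy of maximal tori, conjugacy of a nilpotent element into the nilradical of any Borel, and the existence of a non-degenerate $G$-invariant form on $\g'_{\0}$, remains valid in characteristic $p>3$. This is exactly why the hypothesis on $p$ is imposed, and the case-by-case verification for each of the listed Lie superalgebras is the content of the discussion in \cite[p.\,14]{j}.
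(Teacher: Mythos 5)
The paper gives no proof of this lemma — it is cited directly from Jantzen \cite{j}, where the argument is exactly the one you reconstruct: identify $\chi$ with an element $x\in\g'_{\0}$ via the non-degenerate $G$-invariant form, observe via the $H$-invariance of the form that the condition $\chi'(\mathfrak{N}^+)=0$ amounts to placing $x$ in $H\oplus\mathfrak{N}^-$, and achieve this by Jordan decomposition together with conjugacy of maximal tori and conjugacy of a nilpotent into the nilradical of a Borel of the centralizing Levi. Your proposal is correct and follows the same route as the cited source.
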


  Let  $\Psi\in G$ such that $\chi^{\Psi}(\mathfrak{N}^{+})=0$.
   If $M=M_{\bar{0}}\oplus M_{\bar{1}}$ is a $\mathfrak{g}$-module, we
denote by $M^{\Psi}$ the $\mathfrak{g}$-module having  $M$ as its
underlying vector space and a new $\mathfrak{g}$-action given by
$xm=\Psi (x)m$ for $x\in \mathfrak{g}$ and $m\in M$, where the
action on the right is the given one. Then
$M^{\Psi}_{\bar{i}}=M_{\bar{i}}$, $i=1,2$, and  $M$ is simple if and
only if $M^{\Psi}$ is. So we may assume $\chi (\mathfrak{N}^+)=0$ in
the following. \par  For each $\chi\in
\mathfrak{g}_{\overline{0}}^{*}$, let $\lambda \in {H}^*$ satisfy
$\lambda(h)^p-\lambda(h)=\chi^p(h)$ for every $h\in {H}$. Define
$\mathbf{F} v$ to be the one dimensional
$u(\mathfrak{B},\chi)$-module as follows:\par
$$
\mathfrak{N}^+ v=0, \quad  h v=\lambda(h)v\quad \text{for
every}\quad   h\in {H}.
$$
 Denote $Z^{\chi}(\lambda)=:U(\mathfrak{g},\chi)\otimes_{U(\mathfrak{B},\chi)}
 \mathbf{F}v.$ $Z^{\chi}(\lambda)$ is called a baby Verma module
 with character $\chi$.  Let $\mathfrak{N}^-=\oplus_{\a<0}\g_{\a}
 =\mathfrak{N}^-_{\bar{0}}+
 \mathfrak{N}^-_{\bar{1}}$, where $\mathfrak{N}^-_{\bar{i}}=\oplus
 _{\a<0,a\in \Phi_i}\g_{\a}$, $i=0,1$.

 Assume $\mathfrak{N}^-_{\bar{1}}$ has a basis $
 f_1,\dots,f_{k}$. Then the PBW theorem shows that
 $$Z^{\chi}(\lambda)=\sum_{1\leq i_1<\dots<i_s\leq k}f_{i_1}\cdots
 f_{i_s}u(\mathfrak{N}^-_{\bar{0}},\chi)v.$$

 We denote $$Z_{\bar{0}}=:\sum_{s \quad\text{is even}}f_{i_1}\cdots
 f_{i_s}u(\mathfrak{N}^-_{\bar{0}},\chi)v,\quad  Z_{\bar{1}}=:\sum_{s \quad\text{is odd}}f_{i_1}\cdots
 f_{i_s}u(\mathfrak{N}^-_{\bar{0}},\chi)v.$$
 Then clearly $Z^{\chi}(\lambda)=Z_{\bar{0}} \oplus Z_{\bar{1}}$ is
a  $\mathbf Z_2$-graded $u(\g,\chi)$-module.

\begin{definition}Let $M=M_{\bar{0}}\oplus M_{\bar{1}}$ be a $u(\mathfrak{g},\chi)$-module.
Let  ${H}$ be a maximal torus contained in the Borel subalgebra
$\mathfrak{B}={H}\oplus \mathfrak{N}^+$. For $\lambda\in H^*$, if
there is a nonzero $v\in M_{\bar{0}}\cup M_{\bar{1}}$ such that
$$\mathfrak{N}^+\cdot v=0, h\cdot v=\lambda (h) v\quad \text{for any}\quad
h\in H,$$ then we say that $v$ is  a maximal vector of highest
weight $\lambda$.
\end{definition}

By \cite{zl}, any simple $u(\g,\chi)$-module contains at least one
maximal vector.

\begin{lemma}\cite{zl}The following are equivalent:\par
(1) $M$ is a simple $u(\g,\chi)$-module.\par (2) Any maximal vector
$v\in M$ generates $M$.
\end{lemma}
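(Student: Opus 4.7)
The plan is to prove the equivalence by handling each direction separately. For $(1)\Rightarrow(2)$, the argument is immediate from simplicity: a maximal vector $v$ is by definition a nonzero element of $M_{\bar{0}}\cup M_{\bar{1}}$, hence $\mathbf{Z}_2$-homogeneous, so the cyclic submodule $u(\g,\chi)\cdot v$ is a nonzero $\mathbf{Z}_2$-graded submodule of the simple module $M$. Simplicity then forces $u(\g,\chi)\cdot v=M$.

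For $(2)\Rightarrow(1)$, I would show that every nonzero $\mathbf{Z}_2$-graded submodule $N\subseteq M$ must equal $M$. Pick any nonzero homogeneous $w\in N$; since $u(\g,\chi)$ is finite-dimensional (its PBW basis displayed in Section~2 is finite), the cyclic submodule $u(\g,\chi)\cdot w\subseteq N$ is finite-dimensional and $\mathbf{Z}_2$-graded. Hence, by descending on graded dimension, it contains a simple $\mathbf{Z}_2$-graded submodule $N'$. By the result from \cite{zl} cited just before the lemma, $N'$ contains a maximal vector $v$ of some weight $\lambda$. Because the defining conditions $\mathfrak{N}^+v=0$ and $hv=\lambda(h)v$ are properties of the ambient $\g$-action, $v$ is also a maximal vector in $M$. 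The hypothesis in (2) then gives $u(\g,\chi)\cdot v=M$; but $v\in N'\subseteq N$ yields $u(\g,\chi)\cdot v\subseteq N$, so $N=M$, proving simplicity.

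I do not anticipate a serious obstacle in the argument; it is essentially formal once one has the existence statement for maximal vectors in simple modules quoted from \cite{zl}. The only point requiring a little care is the reduction from an arbitrary nonzero graded submodule $N$ to one containing a maximal vector, which proceeds by taking a finite-dimensional cyclic subsubmodule and then a simple graded subquotient inside it to invoke the cited existence result.
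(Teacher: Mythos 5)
The paper does not supply its own proof of this lemma; it is stated with a citation to \cite{zl} and used as a black box, so there is no in-text argument to compare against. Your proof is correct and self-contained given the surrounding material. The $(1)\Rightarrow(2)$ direction is exactly right: a maximal vector is homogeneous by Definition~2.3, so it generates a nonzero $\mathbf{Z}_2$-graded submodule, which must be all of $M$. Your $(2)\Rightarrow(1)$ argument, reducing a nonzero graded submodule $N$ to a simple graded submodule $N'$ of a finite-dimensional cyclic piece and then invoking the existence of a maximal vector in $N'$, is sound; the observation that maximality of $v$ is intrinsic to the $\g$-action and hence passes from $N'$ to $M$ is the key point and you make it explicitly.

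One remark worth noting: you could avoid the detour through a simple subquotient. Since the paper has already normalized to $\chi(\mathfrak{N}^+)=0$, the nilpotent ideal $\mathfrak{N}^+$ of $\mathfrak{B}$ acts nilpotently on any $u(\g,\chi)$-module, so any nonzero graded submodule $N$ has $N^{\mathfrak{N}^+}\neq 0$; this space is $H$-stable, and decomposing it into $H$-weight spaces (which exist because $H$ is a torus and acts semisimply on a $u(\g,\chi)$-module) produces a homogeneous maximal vector in $N$ directly. That route is slightly more elementary in that it does not rely on the cited existence statement for simple modules, only on the same ingredients underlying it; your version is equally valid and arguably cleaner since it leans on a result the paper has already declared available.
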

\begin{lemma}(1)\cite{zl} If $M$ is a simple $u(\g,\chi)$-module, then $M$ is the
quotient of some $Z^{\chi}(\lambda)$.\par (2) If $\chi$ is
semisimple, then $Z^{\chi}(\lambda)$ has a unique $\mathbf{Z}_2$-
graded maximal submodule.
\end{lemma}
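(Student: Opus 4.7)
For part (1), the argument is essentially formal. By the fact cited just before the statement, the simple module $M$ contains a maximal vector $v$ of some weight $\lambda\in H^*$; the requirement that $M$ be $\chi$-reduced forces $\lambda$ to satisfy the compatibility $\lambda(h)^p-\lambda(h)=\chi(h)^p$, so $\mathbf{F}v$ is a genuine one-dimensional $u(\mathfrak{B},\chi)$-module. The universal property of the induced module $Z^{\chi}(\lambda)=u(\g,\chi)\otimes_{u(\mathfrak{B},\chi)}\mathbf{F}v$ furnishes a $u(\g,\chi)$-homomorphism $\varphi\colon Z^{\chi}(\lambda)\to M$ sending $1\otimes v$ to $v$. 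By Lemma~2.3 the vector $v$ generates the simple module $M$, hence $\varphi$ is surjective and $M$ is a quotient of $Z^{\chi}(\lambda)$.

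For part (2), the strategy is to produce a weight decomposition, show that the $\lambda$-weight space is one-dimensional, and deduce uniqueness of the maximal submodule. First, after a $G$-conjugation (which preserves both the isomorphism class of $Z^{\chi}(\lambda)$ and its lattice of graded submodules), we may arrange $\chi$ to vanish on $\mathfrak{N}^+\oplus\mathfrak{N}^-$, so that in $u(\g,\chi)$ every toral $h\in H$ satisfies $h^p-h=\chi(h)^p$, a polynomial with $p$ distinct roots in $\mathbf{F}$. Hence $H$ acts semisimply on $Z^{\chi}(\lambda)$ and we obtain a weight decomposition $Z^{\chi}(\lambda)=\bigoplus_{\mu\in H^*}Z^{\chi}(\lambda)_\mu$ refining the $\mathbf{Z}_2$-grading.

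I then claim $Z^{\chi}(\lambda)_\lambda=\mathbf{F}v$. A PBW monomial $f_{i_1}\cdots f_{i_s}u_1^{a(1)}\cdots u_m^{a(m)}v$ has weight $\lambda-\eta$, where $\eta$ is a non-negative sum of positive roots $\epsilon_i-\epsilon_j$ of $gl(m|n)$; since $\epsilon_1,\dots,\epsilon_{m+n}$ form a basis of $H^*$, an ``inflow equals outflow'' argument going through the indices in order forces every exponent to vanish, leaving only $v$. Any $\mathbf{Z}_2$-graded submodule $N\subseteq Z^{\chi}(\lambda)$ is a sum of weight spaces, so if $\lambda$ is a weight of $N$ then $v\in N$ and $N=Z^{\chi}(\lambda)$; thus every proper graded submodule lies in $\bigoplus_{\mu\neq\lambda}Z^{\chi}(\lambda)_\mu$, and so does their sum, which is then a proper graded submodule containing all proper graded submodules, i.e., the unique maximal one. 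The step I expect to be most delicate is this one-dimensionality claim: it is clean for $gl(m|n)$ thanks to the $\mathbf{F}$-independent $\epsilon_k$'s, but for the other classical superalgebras listed earlier (where trace-style relations cut down the Cartan) one would need a separate combinatorial check or a uniform argument to rule out extra $\lambda$-weight vectors.
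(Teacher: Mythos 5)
Your treatment of part (1) is correct and is the standard argument.

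Your proof of part (2), however, has a genuine gap at the key claim that $Z^{\chi}(\lambda)_{\lambda}=\mathbf{F}v$. The weight of a PBW monomial $\prod_{\alpha} f_{\alpha}^{n_{\alpha}}v$ is $\lambda-\sum_{\alpha} n_{\alpha}\alpha$ computed inside $H^{*}$ \emph{over} $\mathbf{F}$, so the integer exponents enter only modulo $p$. Since even exponents range up to $p-1$, nontrivial relations $\sum n_{\alpha}\alpha=0$ in $H^{*}$ do occur, and they occur already for $gl(m|n)$. For instance, for $gl(3|n)$ with $p=5$ the even positive roots $\alpha_{12}=\epsilon_1-\epsilon_2$, $\alpha_{23}=\epsilon_2-\epsilon_3$, $\alpha_{13}=\epsilon_1-\epsilon_3$ satisfy
$$2\alpha_{12}+2\alpha_{23}+3\alpha_{13}=5\epsilon_1-5\epsilon_3=0$$
over $\mathbf{F}$, with all exponents $\leq p-1$, so $f_{\alpha_{12}}^{2}f_{\alpha_{23}}^{2}f_{\alpha_{13}}^{3}v$ is a $\lambda$-weight vector independent of $v$. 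The telescoping ``inflow equals outflow'' count is valid over $\mathbf{Z}$ but collapses modulo $p$; thus your worry about extra $\lambda$-weight vectors applies to $gl(m|n)$ itself, not only to the other classical types. Consequently the step ``if $\lambda$ is a weight of $N$ then $v\in N$'' is unjustified, and uniqueness does not follow by this route.

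The paper's argument bypasses weight spaces entirely. Write an arbitrary element of $Z^{\chi}(\lambda)$ as $cv+fv$ with $c\in\mathbf{F}$ and $f\in u(\mathfrak{N}^{-},\chi)\mathfrak{N}^{-}$. Since $\chi$ is semisimple, $\chi(\mathfrak{N}^{-})=0$, so the negative root vectors are nilpotent in $u(\g,\chi)$ and $u(\mathfrak{N}^{-},\chi)\mathfrak{N}^{-}$ is a nilpotent ideal; in particular $f$ is a nilpotent element of $u(\g,\chi)$. If $c\neq 0$ then $1+f/c$ is a unit of $u(\g,\chi)$ (its inverse is a polynomial in $f/c$), whence $v=(1+f/c)^{-1}\bigl(v+(f/c)v\bigr)$ lies in any submodule containing $cv+fv$, forcing that submodule to be all of $Z^{\chi}(\lambda)$. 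Hence every proper $\mathbf{Z}_2$-graded submodule is contained in the proper subspace $u(\mathfrak{N}^{-},\chi)\mathfrak{N}^{-}v$, and so is their sum, which is therefore the unique maximal graded submodule. You should replace your weight-space argument with this nilpotency argument.
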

\begin{proof}
(2) Let $V\subseteq Z^{\chi}(\lambda)$ be a proper submodule. Since
the maximal vector $v$ generates $Z^{\chi}(\l)$, we have $v\notin
V$. Suppose there is $f$ in
$u(\mathfrak{N}^-,\chi)\mathfrak{N}^-\subseteq u(\g,\chi)$ such that
$v+fv\in V$. Since $\chi$ is semisimple, Lemma 2.1 implies that $f$
acts nilpotently.  Thus $1+f$ is invertible in $gl(Z^{\chi}(\l))$.
This  leads to $v\in V$, a contradiction. So we have $V\subseteq
u(\mathfrak{N}^-,\chi)\mathfrak{N}^-v$. It follows that the sum of
all the
 proper $\mathbf Z_2$-graded submodules is the unique maximal $\mathbf Z_2$-graded submodule of $M$.

\end{proof}

 Applying a similar arguments as that used
in \cite[Prop.3]{r}, we get
\begin{proposition}If $\chi$ is semisimple, the any simple
$u(\g,\chi)$-module has a unique  maximal  vector.

 \end{proposition}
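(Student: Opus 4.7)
The plan is to install a $\mathbf{Z}$-grading on $\g$ coming from root height, descend it to $u(\g,\chi)$, and argue that (i) the top graded piece of the simple quotient $L(\l)$ is one-dimensional, and (ii) any maximal vector of the simple module must reach that top piece.

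Concretely, I would fix a base of simple roots $\alpha_1,\ldots,\alpha_r$ of $\g$ and place $\g_\alpha$ in degree $\mathrm{ht}(\alpha):=\sum n_i$ when $\alpha=\sum n_i\alpha_i$, with $H$ in degree $0$. This makes $\g$ a $\mathbf{Z}$-graded Lie superalgebra with $\mathfrak{N}^\pm$ in strictly positive/negative grades, and it is compatible with the $\mathbf{Z}_2$-grading because every root space has fixed parity. Since $\chi$ is semisimple we may assume $\chi(\mathfrak{N}^\pm)=0$, so every relation $x^p-x^{[p]}-\chi(x)^p$ is homogeneous and the height grading descends to $u(\g,\chi)$ and to $Z^\chi(\l)$, with $\bar v$ placed in grade $0$. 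As negative root vectors have strictly negative height, inspection of the PBW basis gives $Z^\chi(\l)_0=\mathbf{F}\bar v$.

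Next, fix any maximal vector $v\in M$ of weight $\l$. By Lemma~2.4, $v$ generates $M$, and by Lemma~2.5(1) this realizes $M$ as the quotient $L(\l)=Z^\chi(\l)/N(\l)$, where $N(\l)$ is the unique maximal graded submodule provided by Lemma~2.5(2). The computation in the proof of Lemma~2.5(2) places $N(\l)\subseteq u(\mathfrak{N}^-,\chi)\mathfrak{N}^-\bar v$, which lies in strictly negative grade; hence $L(\l)_0=\mathbf{F}\bar v$. Now let $w\in M$ be an arbitrary maximal vector and decompose $w=\sum_{i\le 0}w_i$ by grade. Because $\mathfrak{N}^+$ strictly raises grade and $H$ preserves it, the equations $\mathfrak{N}^+w=0$ and $hw=\mu(h)w$ split grade by grade, so each nonzero $w_i$ is itself a maximal vector (of the same parity as $w$). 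By Lemma~2.4 any nonzero $w_i$ generates $M$, but maximality of $w_i$ gives $u(\g,\chi)w_i=u(\mathfrak{N}^-,\chi)w_i$, which lies in grades $\le i$; this can meet the nonzero top piece $L(\l)_0$ only when $i=0$. Therefore $w=w_0\in L(\l)_0=\mathbf{F}\bar v$, so $w$ is a scalar multiple of $v$.

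The only delicate step I anticipate is verifying that the height grading descends to $u(\g,\chi)$: this requires $\chi(x)^p=0$ for every $x\in\g_{\bar 0}$ of nonzero height, which is exactly $\chi(\mathfrak{N}^\pm)=0$ -- already arranged by the $G$-conjugation preceding Lemma~2.2. Once this is in hand, the argument is the natural superalgebra adaptation of Rudakov's Prop.~3, since odd root vectors are graded by height just like the even ones and the splitting by height is unaffected by parity.
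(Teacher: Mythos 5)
Your approach via the height $\mathbf{Z}$-grading is the Rudakov-style argument that the paper invokes by reference, so in spirit the route matches the paper's. The setup (the relations $x^p-x^{[p]}-\chi(x)^p$ are height-homogeneous once $\chi(\mathfrak{N}^\pm)=0$, so $u(\g,\chi)$ and $Z^\chi(\l)$ are graded), the observation that each height component of a maximal vector is again a maximal vector, and the ``top-piece'' contradiction are all correct and are the right ideas.

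There is, however, a real gap at the step ``hence $L(\l)_0=\mathbf{F}\bar v$'' and the subsequent decomposition $w=\sum_{i\le 0}w_i$ \emph{inside} $M\cong L(\l)$. For $L(\l)$ to carry an induced height grading, it is not enough that $N(\l)\subseteq u(\mathfrak{N}^-,\chi)\mathfrak{N}^-\bar v$ (which only says $N(\l)\cap Z^\chi(\l)_0=0$); you need $N(\l)$ itself to be a height-graded subspace, i.e.\ closed under taking height-homogeneous components, and you do not establish this. One cannot get it for free from $H$-semisimplicity: in characteristic $p$ two PBW monomials of different height can have the same $H$-weight (e.g.\ in $gl(3)$ the monomials $f_{\a_1}^{p-1}f_{\a_1+\a_2}\bar v$ and $f_{\a_2}\bar v$ both have weight $\l-\a_2$), so the weight decomposition of $N(\l)$ does not refine to a height decomposition. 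The gap is fillable: form the associated graded $N'$ of $N(\l)$ with respect to the height filtration $F_kZ^\chi(\l)=\bigoplus_{i\ge -k}Z^\chi(\l)_i$, identified with a graded subspace of $Z^\chi(\l)$ via $F_k/F_{k-1}\cong Z^\chi(\l)_{-k}$. Since the filtration is by $\mathbf{Z}_2$-graded $u(\mathfrak{B},\chi)$-submodules and the algebra is height-graded, $N'$ is again a $\mathbf{Z}_2$-graded $u(\g,\chi)$-submodule, it avoids $\bar v$ (so is proper), and $\dim N'=\dim N(\l)$; by maximality $N'\subseteq N(\l)$, hence $N'=N(\l)$ and $N(\l)$ is height-graded. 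With this inserted before your decomposition of $w$, the rest of your argument goes through.
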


\section{The simplicity of $Z^{\chi}(\l)$}

Let $\g$ be one of the Lie superalgebras listed in last section. Let
$\Phi=\Phi_{0}\cup\Phi_{1}$ be the root system of $\g$. In this
section, we assume $\chi$ is semisimple. i.e., $\chi(\g_{\a})=0$ for
any $\a\in \Phi$. \par  Let $e_{\a_i}$(resp., $f_{\alpha_i}),
\alpha_i\in \Phi^+$ denote the positive (resp., negative) root
vectors. Define a function $\bar p$: $\Phi^+\longrightarrow \mathbf
Z$  such that
$$\bar{p}(\a_i)=\begin{cases}
p, &\text{if $\alpha_i\in \Phi_{\0}$}\\
2, &\text{if $\alpha_i\in \Phi_{\bar{1}}$.}\end{cases}$$ We write
$\bar p(\a_i)$ simply as $\bar p$.  We abuse the notation $\mathbf
F_2$ as the subset $\{\bar 0, \bar 1\}\subseteq \mathbf F_p$.\par
Assume $|\Phi^+|=n$. Put elements in $\Phi^+$ in the order of
ascending heights: $\a_1,\a_2,\dots, \a_n$. Let $(i_1,\dots i_n)$
and $(i_1,\dots,i_n)^-$, $0\leq i_k\leq \bar p-1$ denote the product
$$e^{i_1}_{\a_1}e^{i_2}_{\a_2}\dots e^{i_n}_{\a_n} \quad
\text{and}\quad f^{i_1}_{\a_1}f^{i_2}_{\a_2}\dots f^{i_n}_{\a_n}\in
u(\g,\chi)$$ respectively. Then using a similar argument as that for
\cite[Prop.4]{r}, we have
\begin{proposition}Any nonzero submodule of $Z^{\chi}(\lambda)$
contains $(\bar{p}-1,\dots,\bar{p}-1)^-v$.
\end{proposition}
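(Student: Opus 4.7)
Following Rudakov's strategy \cite[Prop.~4]{r} adapted to the super setting, the plan is to act on an arbitrary nonzero $w\in V$ by a well-chosen element $T\in u(\mathfrak{N}^-,\chi)$ and land in the one-dimensional lowest weight space of $Z^\chi(\l)$, which will be spanned precisely by $v_0 := (\bar{p}-1,\dots,\bar{p}-1)^- v$. Since $\chi$ is semisimple, $H$ acts semisimply on $Z^\chi(\l)$ and $V$ inherits a weight decomposition, so I take $w\in V$ to be a nonzero weight vector of some weight $\nu$; expand $w = \sum_I c_I (I)^- v$ in the PBW basis, with each occurring $I = (i_1,\dots,i_n)$ satisfying $\sum_k i_k\a_k = \l - \nu$. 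Choose $I^*$ with $c_{I^*}\neq 0$ of maximal total degree $|I^*| := \sum_k i_k^*$ and set
\[
T := f_{\a_1}^{\bar{p}-1-i_1^*} f_{\a_2}^{\bar{p}-1-i_2^*} \cdots f_{\a_n}^{\bar{p}-1-i_n^*}.
\]

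The operator $T$ carries weight $-\sum_k(\bar{p}-1-i_k^*)\a_k$, so $Tw$ has weight $\mu := \l - \sum_k(\bar{p}-1)\a_k$. This weight space is one-dimensional and spanned by $v_0$: any PBW monomial $(J)^- v$ of weight $\mu$ would yield a nonnegative relation $\sum_k(\bar{p}-1-j_k)\a_k = 0$ among the positive roots, which forces $j_k = \bar{p}-1$ for every $k$. Hence $Tw \in \mathbf{F}v_0$. To see $Tw \neq 0$, I would filter $u(\mathfrak{N}^-,\chi)$ by total PBW degree; the associated graded is supercommutative on the $f_\a$'s modulo $f_\a^{\bar{p}} = 0$, using $f_\a^p = f_\a^{[p]} = 0$ for even $\a$ (since $\chi$ is semisimple and root vectors have vanishing $[p]$-power), $f_\a^2 = 0$ for isotropic odd $\a$, and, in the non-isotropic odd case, the observation that the identity $f_\a^2 = \tfrac{1}{2} f_{2\a}$ has right-hand side of strictly lower total degree, so its top-degree part gives the clean relation $f_\a^2 = 0$. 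In the gr algebra, $T\cdot(I^*)^-$ equals $f_{\a_1}^{\bar{p}-1}\cdots f_{\a_n}^{\bar{p}-1}$ (acting on $v$ as $v_0$); for any other $I\neq I^*$ with $c_I\neq 0$, some $i_k > i_k^*$ (else $I\leq I^*$ componentwise together with the common weight would force $I = I^*$), so the exponent of $f_{\a_k}$ in the gr product exceeds $\bar{p}-1$ and that gr product vanishes. Lifting back, $T(I^*)^- v = v_0 + (\text{lower filtration})$ while $T(I)^- v$ lies in strictly lower filtration for $I\neq I^*$; since both elements have weight $\mu$ but the lower-filtration parts carry weights strictly above $\mu$, those parts must vanish, giving $T(I^*)^- v = v_0$ and $T(I)^- v = 0$. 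Therefore $Tw = c_{I^*}v_0\neq 0$, and $v_0\in V$.

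The main technical hurdle will be the identification of the associated graded in the non-isotropic odd-root cases (notably $B(m|n)$), where the defining relation $f_\a^2 = \tfrac{1}{2}f_{2\a}$ is inhomogeneous for the total-degree filtration. The key observation is that its top-degree part provides the uniform relation $f_\a^{\bar p} = 0$ needed to make the gr algebra supercommutative with the correct truncation; modulo this bookkeeping, the proof is a faithful translation of Rudakov's modular Lie algebra argument to the super setting.
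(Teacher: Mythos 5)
Your proof is correct and is essentially what the paper has in mind: the paper does not spell out the argument but simply refers to Rudakov's Prop.~4, and your write-up is a faithful adaptation of that argument to the super setting — pass to a weight vector, apply the complementary monomial $T$ to land in the one-dimensional bottom weight space, and verify nonvanishing via the total-degree filtration whose associated graded is supercommutative with $f_\a^{\bar p}=0$ (your handling of the non-isotropic odd roots, where $f_\a^2=\tfrac12 f_{2\a}$ drops degree and hence dies in $\mathrm{gr}$, is the right observation). One small simplification worth noting: since the $(\mu-\l)$-weight space of $u(\mathfrak{N}^-,\chi)$ is one-dimensional, each $T(I)^-$ is already a scalar multiple of $(\bar p-1,\dots,\bar p-1)^-$ with no lower-order terms, so the lifting-back step only needs to read off that scalar from $\mathrm{gr}$ rather than argue that lower-filtration pieces vanish by weight.
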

Let $\g$ be a restricted Lie superalgebra. Consider $U(\g)$ as a
$\g_{\0}$ -module with the adjoint action, then we get
$$U(\g)=\oplus _{\lambda \in H^*}U(\g)_{\lambda}.$$ We have
$U(\g)_{\lambda}U(\g)_{\mu}\subseteq U(\g)_{\lambda+\mu}$. Then
$U(\g)_0$ is a subalgebra of $U(\g)$.
\begin{lemma}
\cite[7.4.2]{d} Let $L=U(\g)\mathfrak{N}^+\cap U(\g)_0$. Then \par
(1) $L=\mathfrak{N}^-U(\g)\cap U(\g)_0$ and $L$ is a two-sided ideal
of $U(\g)_0$.\par (2) $U(\g)_0=U(H)\oplus L$.
\end{lemma}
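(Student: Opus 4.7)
The plan is to use the PBW decomposition $U(\g)=U(\mathfrak{N}^-)\,U(H)\,U(\mathfrak{N}^+)$ together with the weight grading of $U(\g)$ under the adjoint action of $H$. For a positive root $\a$, we have $[h,e_{\a}]=\a(h)e_{\a}$ and $[h,f_{\a}]=-\a(h)f_{\a}$, so a PBW monomial $f^{I}h^{J}e^{K}$ has $H$-weight $\mathrm{wt}(K)-\mathrm{wt}(I)$, where $\mathrm{wt}(I)$ and $\mathrm{wt}(K)$ denote the corresponding non-negative integral combinations of positive roots. Hence $U(\g)_{0}$ is spanned by the PBW monomials with $\mathrm{wt}(I)=\mathrm{wt}(K)$. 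These split into two classes: (a) those with $I=K=0$, which exhaust $U(H)$; and (b) those with $I\neq 0$ and $K\neq 0$, whose span I denote by $M$. A monomial of class (b) involves at least one positive root vector and therefore cannot lie in $U(H)$, so the sum $U(H)+M$ is already direct and equals $U(\g)_{0}$.

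Next I would identify $M$ with each of the two intersections. By PBW, the left ideal $U(\g)\mathfrak{N}^+$ is spanned by the PBW monomials with $K\neq 0$; intersecting with $U(\g)_{0}$ forces $\mathrm{wt}(I)=\mathrm{wt}(K)\neq 0$, so also $I\neq 0$, placing the monomial in $M$. The reverse inclusion $M\subseteq U(\g)\mathfrak{N}^+$ is immediate because each spanning monomial of $M$ ends with a nontrivial factor from $U(\mathfrak{N}^+)\mathfrak{N}^+$. A symmetric argument, using that $\mathfrak{N}^-U(\g)$ is the right ideal spanned by PBW monomials with $I\neq 0$, gives $M=\mathfrak{N}^-U(\g)\cap U(\g)_{0}$. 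Combining the two equalities proves the first identity of (1) as well as the decomposition (2).

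To show that $L$ is a two-sided ideal of $U(\g)_{0}$, I would exploit the two descriptions just obtained. Given $y\in U(\g)_{0}$ and $x\in L$, the fact that $U(\g)\mathfrak{N}^+$ is a left ideal of $U(\g)$ yields $yx\in U(\g)\mathfrak{N}^+$; and since weights add under multiplication, $yx\in U(\g)_{0}$, hence $yx\in L$. Symmetrically, $xy\in \mathfrak{N}^-U(\g)\cap U(\g)_{0}=L$. The only delicate point in the super setting is that odd factors occur in PBW monomials with exponents restricted to $\{0,1\}$; however, the adjoint action of $H\subseteq\g_{\0}$ introduces no sign subtleties, since each root vector (even or odd) is a weight vector of weight equal to its root. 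The weight-grading argument therefore runs identically to the classical Lie-algebra case, and the main obstacle is notational bookkeeping rather than any genuine mathematical difficulty.
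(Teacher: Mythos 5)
Your proof is correct, and it follows the same route as the result cited by the paper (Dixmier, {\it Enveloping Algebras}, 7.4.2): the PBW triangular decomposition $U(\g)=U(\mathfrak{N}^-)U(H)U(\mathfrak{N}^+)$ combined with the $\mathrm{ad}\,H$-weight grading. The paper gives no proof of its own here, only the citation, so your write-up is essentially supplying the missing argument and, usefully, confirming that it transfers to the super setting. The key point you identify --- that since $H\subseteq\g_{\0}$, the operators $\mathrm{ad}\,h$ are ordinary (not super) derivations of $U(\g)$, so every root vector, even or odd, is an $H$-weight vector and weights add under multiplication --- is exactly the reason the classical argument carries over verbatim. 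One small thing worth making explicit if you write this out formally: the step ``intersection of the two PBW-spanned subspaces is the span of the common monomials'' relies on both subspaces being spanned by subsets of one fixed basis; and the dichotomy ``$I=K=0$ or both $I\neq 0$ and $K\neq 0$'' uses that $\mathrm{wt}(K)=0$ forces $K=0$ because positive roots are linearly independent over $\mathbf{Z}_{\geq 0}$ in the sense that a nonnegative integral combination vanishes only when all coefficients vanish. Both are standard, but stating them keeps the argument airtight.
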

Let $\pi$: $U(\g)\longrightarrow u(\g,\chi)$ be the canonical
epimorphism of associative algebras. Then $$\pi(L)\subseteq
\pi(U(\g)\mathfrak{N}^+)=u(\g,\chi)\mathfrak{N}^+.$$ Take a basis of
$\g$:
$$f_{\a_1},\dots,f_{\a_n},h_1,\dots,h_s,e_{\a_1},\dots,e_{\a_n},$$
where the positive roots $\a_1,\dots,\a_n$ is in the order of
descending heights. Then $u(\g,\chi)$ has a PBW basis $$
f^{i_1}_{\a_1}\cdots f^{i_n}_{\a_n}h^{k_1}_1\cdots h^{k_s}_s
e^{j_1}_{\a_1}\cdots e^{j_n}_{\a_n}, 0\leq i_l, j_l\leq \bar p-1,
0\leq k_i\leq p-1.$$ Since $\chi(\mathfrak{N}^+)=0$, so that
$e^{\bar p}_{\a_l}=0$ for $l=1,\dots,n$, each nonzero element of
$\pi(U(\g)\mathfrak{N}^+)$ is a linear combination of the basis
vectors of $u(\g,\chi)$ with $\sum j_l>0$. Thus we get
$$\pi(L)\cap \pi(U(H))=\pi(L)\cap u(H,\chi)=0.$$ It follows from Lemma
3.2(2) that $\pi(U(\g)_0)=u(H,\chi)\oplus \pi(L)$. We call the
projection map from $\pi(U(\g)_0)$ to $u(H,\chi)$ with the kernel
$\pi(L)$ the Harish-Chandra homomorphism, denoted by $\gamma$.\par
For the maximal vector $v$ which defines $Z^{\chi}(\lambda)$,  we
have
$$
(\bar{p}-1,\dots, \bar{p}-1)(\bar{p}-1,\dots, \bar{p}-1)^-v
$$
$$
=\gamma (\Pi e^{\bar{p}-1}_{\alpha_i}\Pi f^{\bar{p}-1}_{\alpha_i})v
$$
$$
=f({\lambda})v.
$$
Note that the definition of $f(\lambda)$ is dependant on the
positive root system $\Phi^+$ and the corresponding Borel
subalgebra.
\begin{theorem}  $Z^{\chi}(\lambda)$ is simple
if and only if  $f(\lambda)\neq 0$.\end{theorem}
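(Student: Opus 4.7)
The plan is to prove both directions using Proposition~3.1 together with the identity $(\bar p-1,\dots,\bar p-1)(\bar p-1,\dots,\bar p-1)^- v = f(\l)v$ stated just above the theorem, and the Harish--Chandra decomposition of Lemma~3.2.

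For the direction $f(\l)\neq 0 \Rightarrow$ simplicity, I would take any nonzero submodule $V$ of $Z^{\chi}(\l)$. Proposition~3.1 places $(\bar p-1,\dots,\bar p-1)^- v$ in $V$, and multiplying by $(\bar p-1,\dots,\bar p-1)\in u(\g,\chi)$ forces $f(\l)v\in V$. Since $f(\l)\neq 0$, this gives $v\in V$, and as $v$ generates $Z^{\chi}(\l)$ we conclude $V = Z^{\chi}(\l)$.

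For the converse, I would prove the contrapositive: assume $f(\l)=0$ and exhibit a proper nonzero submodule. The natural candidate is $V := u(\g,\chi)\cdot w$ with $w=(\bar p-1,\dots,\bar p-1)^- v$. This is a nonzero $\mathbf Z_2$-graded submodule (since $u(\g,\chi)$ is a superalgebra and $w$ is homogeneous), so it suffices to show $v\notin V$. I would compare weights: $v$ has weight $\l$ while $w$ has weight $\mu = \l - \sum_i (\bar p-1)\a_i$, so the $\l$-weight subspace of $V$ is spanned by vectors $uw$ with $u\in u(\g,\chi)$ of weight $\nu = \sum_i (\bar p-1)\a_i$. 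For such $u$, the product $u\cdot(\bar p-1,\dots,\bar p-1)^-$ lies in $u(\g,\chi)_0$, and by Lemma~3.2 decomposes as its Harish--Chandra image plus an element of $\pi(L)\subseteq u(\g,\chi)\mathfrak{N}^+$; since $\mathfrak{N}^+ v = 0$, only the $u(H,\chi)$-part survives, giving $uw = \gamma(u\cdot(\bar p-1,\dots,\bar p-1)^-)(\l)\,v$. Expanding $u$ in the PBW basis, any monomial beginning with a negative root vector lies in $\mathfrak{N}^- U(\g)\cap U(\g)_0 = L$ and is killed by $\gamma$. The surviving monomials have the form $\prod h_j^{k_j}\prod e_{\a_i}^{b_i}$ with $\sum b_i\a_i=\sum(\bar p-1)\a_i$ and $0\leq b_i\leq \bar p-1$; since positive roots admit no nontrivial nonnegative-integer null relation, this forces $b_i=\bar p-1$ for every $i$. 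Commuting the $h_j^{k_j}$ past $\prod e_{\a_i}^{\bar p-1}$ using $[h,e_\a]=\a(h)e_\a$, each such monomial reduces to a scalar multiple of $f(\l)v$, which vanishes by hypothesis. Hence the $\l$-weight space of $V$ is zero, $v\notin V$, and $V$ is proper.

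The main obstacle is this last weight-space analysis: one must verify that every PBW contribution surviving the Harish--Chandra projection is controlled by the single polynomial $f(\l)$, which rests on the positivity of the root system and the PBW ordering of Lemma~3.2. Extra care is also needed for the $\mathbf Z_2$-graded setting, where odd root vectors have $\bar p=2$, so one should check that the relevant nilpotency ($e_\a^{\bar p}=0$ under the semisimplicity hypothesis on $\chi$) and the weight-counting argument remain valid in the super case.
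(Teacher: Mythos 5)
Your forward direction is identical to the paper's: Proposition 3.1 puts $w=(\bar p-1,\dots,\bar p-1)^-v$ in every nonzero submodule, and hitting it with $(\bar p-1,\dots,\bar p-1)$ yields $f(\lambda)v$, hence $v$, hence the whole module.

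Your converse, however, is a genuinely different argument from the paper's. The paper argues \emph{from} simplicity: it observes that $w$ is a minimal vector, so $Z^{\chi}(\lambda)=u(\mathfrak N^+)w$; by comparing $\dim u(\mathfrak N^+,\chi)=\bar p^n$ with $\dim Z^{\chi}(\lambda)$ it concludes the monomials $(i_1,\dots,i_n)w$ form a basis, so in particular $(\bar p-1,\dots,\bar p-1)w=f(\lambda)v$ is nonzero. You instead prove the contrapositive: you exhibit $V=u(\g,\chi)w$ as a proper nonzero graded submodule when $f(\lambda)=0$, by pushing every element of $V$ in the top degree through the Harish--Chandra projection of Lemma 3.2 and showing it is a scalar multiple of $f(\lambda)v$. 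The paper's route is shorter (pure dimension count), while yours is more explicit about \emph{which} submodule obstructs simplicity and makes the role of the Harish--Chandra decomposition transparent. Both uses of the PBW/Harish--Chandra machinery are consistent with the paper's setup.

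One point needs to be made precise in your argument, and you should state it explicitly rather than call it ``weight.'' The functional $\lambda\in H^*$ and the shifts $\lambda-\sum i_j\alpha_j$ take values in $\mathbf F$ of characteristic $p$; so when $p$ is small relative to the rank there can be nontrivial relations $\sum i_j\alpha_j\equiv 0\ (\mathrm{mod}\ p)$ with $0\le i_j\le\bar p-1$, and the $H^*$-weight-$\lambda$ subspace of $Z^{\chi}(\lambda)$ may have dimension greater than one. Your deduction that $\sum b_i\alpha_i=\sum(\bar p-1)\alpha_i$ forces $b_i=\bar p-1$ (``no nonnegative-integer null relation among positive roots'') is correct only over $\mathbf Z$. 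The fix is to use the $\mathbf Z\Phi$-grading (equivalently the $\mathbf Z_{\le 0}$-grading by total height) on $Z^{\chi}(\lambda)\cong u(\mathfrak N^-,\chi)\otimes\mathbf Fv$: its degree-$0$ piece is exactly $\mathbf Fv$, the submodule $V=u(\g,\chi)w$ is graded since $w$ is homogeneous and $u(\g,\chi)$ is $\mathbf Z\Phi$-graded (the ideal $I_\chi$ is generated by the homogeneous elements $e_\alpha^p$, $\alpha\in\Phi_{\bar 0}$, and $h^p-h^{[p]}-\chi(h)^p$, $h\in H$, since $\chi$ is semisimple), and then your PBW argument closes. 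With that substitution your proof is sound.
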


\begin{proof} Suppose $f(\lambda)\neq 0$. By Proposition 3.1, any nonzero submodule
$V\subseteq Z^{\chi}(\l)$ contains the maximal vector $v$, hence
$V=Z^{\chi}(\l)$. So $Z^{\chi}(\l)$ is simple. \par
 Conversely, assume $Z^{\chi}(\lambda)$ is simple. We see that $w=:(\bar p-1,\dots,
 \bar p-1)^-v$ is a minimal vector in $Z^{\chi}(\lambda)$. i.e., $f_{\a_i}w=0$
 for every $\a_i\in \Phi^+$. Let $\g=\mathfrak N^-\oplus H\oplus\mathfrak  N^+$. Then we get
 $Z^{\chi}(\lambda)=u(\mathfrak N^+)w$. Since
 $\text{dim}Z^{\chi}(\lambda)=2^{\text{dim}\mathfrak N^+_{\bar
 1}}p^{\text{dim}\mathfrak N^+_{\bar 0}}$, the set $\{(i_1,i_2,\dots,
 i_n)w|0\leq i_j\leq \bar p-1,j=1,\dots,n\}$ is a basis of $Z^{\chi}(\lambda)$.
 Therefore,
 $$0\neq (\bar p-1,\dots,\bar p-1)w=(\bar p-1,\dots,\bar p-1)(\bar p-1,\dots,\bar p-1)^-v
 =f(\lambda)v.$$ Hence we get $f(\l)\neq 0$.
\end{proof}
Fixing a simple root system  $\Delta$ and  the positive root system
$\Phi^+$,  let $\mathfrak{N}^+=\oplus
_{\alpha\in{\Phi^+}}\g_{\alpha}$. For $\alpha\in \Delta_0$, the
refection $s_{\a}$ can be extended to an automorphism of $\g$, also
denoted by $s_{\a}$.  We have \cite[p.277]{ja} $$s_{\a}=\text{exp}
ade_{\a}\text{exp} ad(-f_{\a})\text{exp} ade_{\a}.$$ Assume $p>3$.
Then $s_{\a}$ is well defined in all cases. Each $s_{\a}$ can be
naturally extended to an automorphism of the tensor algebra $T(\g)$
such that $s_{\a}(1)=1$. Let $\mathfrak{I}$ be the two sided ideal
of $T(\g)$ generated by $$ \{x\otimes
y-(-1)^{\mathrm{p}(x)\mathrm{p}(y)}y\otimes
x-[x,y],z^p-z^{[p]}-\chi(z)^p\cdot 1|x, y\in \g_{\0}\cup\g_{\bar 1},
z\in\g_{\0}\}.$$ Then it is clear that $s_{\a}$ stabilizes
$\mathfrak{I}$. Hence $s_{\a}$ induces an automorphism of
$u(\g,\chi)=U(\g)/\mathfrak{I}$, also denoted by $s_{\a}$.\par

The induced action of $s_{\a}$ on $H^*$ is defined as:
$$
s_{\a}\cdot\lambda (h)=\lambda(s^{-1}_{\a} h), \text{for}\quad
\lambda\in H^*, h\in H.$$

 We identify $H$
with $(H^*)^*$ by letting $h(\lambda)=\lambda(h)$ for any
$\lambda\in H^*$ and $h\in H$. So $U(H)$ can be identified with the
polynomial function ring on $H^*$.  If $\text{dim}H=s$, regard $H^*$
as the affine variety $\mathbf F^s$. Then all weights related to
$\chi$ makes a closed subset $X$ defined by  the relation
$\l^p-\l=\chi$ and $u(H,\chi)$ is  the coordinate algebra. \par Put
elements of  $\Delta$ in a fixed order: $\a_1,\a_2,\dots, \a_r$. For
$\g\neq gl(m|n)$, we take a basis of $H$:
$h_1=:h_{\a_1},\dots,h_r=:h_{\a_r}$. For $\g=gl(m|n)$, fix a  $h$
such that $H=\{h_{\a}|\a\in \Delta\}\oplus \mathbf Fh$. We take the
basis of $H$: $h_1=:h_{\a_1},\dots,h_r=:h_{\a_r}, h_{r+1}=h$. Then
we have $X=\{(\l_1,\dots,\l_s)|\l_i^p-\l_i=\chi(h_i)^p\}\subseteq
F^s$ and $|X|=p^s$. For our purposes, it is sufficient to work on
$X$ instead of $H^*$.

For $f(h)=f(h_{\a_1},\dots,h_{\a_n})\in u(H,\chi)$, we have
$$s_{\a}f(h)(\lambda)=
s_{\a}f(h_{\a_1},\dots,h_{\a_n})(\lambda)$$$$=f(s_{\a}h_{\a_1},\dots,s_{\a}h_{\a_n})
(\lambda)$$$$=f(\lambda(s_{\a}h_{\a_1}),\dots,\lambda(s_{\a}h_{\a_n}))$$$$=
f(h_{\a_1},\dots,h_{\a_n})(s_{\a}^{-1}\lambda).$$   Let $v'$ be the
maximal vector defined by the Borel subalgebra
$s_{\a}(\mathfrak{B})$ of weight $\lambda\in H^*$ and character
$\chi$. i.e., $\mathbf Fv'$ is a 1-dimensional
$u(s_{\a}(\mathfrak{B}),\chi)$-module. Let $f^{\a}(\lambda)$
 be the polynomial defined with respect to the new positive root system
  $s_{\a}(\Phi^+)$.   Then we
have
$$
f^{\a}(\l)v'=s_{\a}(e_{\a_1})\dots
s_{\a}(e_{\a_n})s_{\a}(f_{\a_1})\dots s_{\a}(f_{\a_n})v'
$$
$$
=s_{\a}(e_{\a_1}\dots e_{\a_n}f_{\a_1}\dots f_{\a_n})v'.
$$
Assume $$e_{\a_1}\dots e_{\a_n}f_{\a_1}\dots f_{\a_n}=t(h)+\sum_i
u_in_i^+\in \pi(U(\g)_0),$$ where $u_i\in u(\g,\chi)$, $n_i^{+}\in
\mathfrak{N}^{+}$ and $t(h)=\gamma (e_{\a_1}\dots
e_{\a_n}f_{\a_1}\dots f_{\a_n})\in u(H,\chi).$\par We denote
$\mathfrak{N}^{'+}=s_{\a}(\mathfrak{N}^+)$. Then we get
$$s_{\a}( e_{\a_1}\dots e_{\a_n}f_{\a_1}\dots
f_{\a_n})=s_{\a}(t(h))+\sum_i u'_in_i^{'+},$$  where $u'_i\in
u(\g,\chi)$, $n_i^{'+}\in \mathfrak{N}^{'+}$. Thus, we have
$$f^{\a}(\l)v'=s_{\a}(t(h))v'=\lambda(s_{\a}(t(h)))v' =t(h)(s^{-1}_{\a}(\lambda))v'.$$
Since $t(h)(\lambda)=f(\lambda)$,  we get
$f^{\a}(\l)=f(s^{-1}_{\a}(\l))$.\par
\begin{lemma} For each $\a\in \Delta_0$, $f(\l)$ and $f(s_{\a}^{-1}(\l+\a))$
 have the same set of zeros. \end{lemma}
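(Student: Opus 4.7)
The plan is to recast the claim as a statement about simplicity of two baby Verma modules and to prove it by exhibiting an explicit homomorphism between them. By Theorem 3.3, $f(\l)\ne 0$ if and only if $Z^{\chi}(\l)=Z^{\chi}_{\mathfrak{B}}(\l)$ is simple, and the same theorem applied with respect to the Borel $s_{\a}(\mathfrak{B})$ gives $f^{\a}(\mu)\ne 0$ if and only if $Z^{\chi}_{s_{\a}(\mathfrak{B})}(\mu)$ is simple. Combined with the identity $f^{\a}(\l)=f(s_{\a}^{-1}\l)$ just established, the lemma is therefore equivalent to the assertion that $Z^{\chi}_{\mathfrak{B}}(\l)$ and $Z^{\chi}_{s_{\a}(\mathfrak{B})}(\l+\a)$ are simultaneously simple.

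Let $v$ denote the maximal vector generating $Z^{\chi}(\l)$ and set $w:=f_{\a}^{p-1}v$. The relation $[h,f_{\a}]=-\a(h)f_{\a}$ together with $-(p-1)\equiv 1\pmod{p}$ gives $h\cdot w=(\l+\a)(h)w$ for every $h\in H$, and since $\a(h_{i})\in\mathbf{F}_{p}$ on the chosen basis of $H$, the weight $\l+\a$ satisfies the same $p$-power condition as $\l$ and is thus compatible with $\chi$. Nilpotence of $f_{\a}$ and semisimplicity of $\chi$ further give $f_{\a}\cdot w=f_{\a}^{p}v=f_{\a}^{[p]}v+\chi(f_{\a})^{p}v=0$.

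The crucial remaining step is to show $e_{\beta}w=0$ for every $\beta\in\Phi^{+}\setminus\{\a\}$, which together with the previous paragraph exhibits $w$ as annihilated by all of $s_{\a}(\mathfrak{N}^{+})=\mathbf{F}f_{\a}\oplus\bigoplus_{\beta\in\Phi^{+}\setminus\{\a\}}\g_{\beta}$. Expanding
$$
e_{\beta}f_{\a}^{p-1}v=\sum_{j=0}^{p-1}(-1)^{j}\binom{p-1}{j}f_{\a}^{p-1-j}(\mathrm{ad}\,f_{\a})^{j}(e_{\beta})\,v,
$$
and observing that $(\mathrm{ad}\,f_{\a})^{j}(e_{\beta})$ is a scalar multiple of $e_{\beta-j\a}$ (zero when $\beta-j\a\notin\Phi$), the task reduces to checking $e_{\beta-j\a}v=0$ for every $j\ge 0$ with $\beta-j\a\in\Phi$. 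Because $\a$ is a simple root and $\beta$ is not a multiple of $\a$, the coefficients of $\beta-j\a$ with respect to $\Delta$ cannot be of mixed signs, so every such $\beta-j\a$ lies in $\Phi^{+}$ and hence each $e_{\beta-j\a}v=0$. I expect this positivity-of-the-$\a$-string verification to be the principal obstacle; super signs cause no trouble because $\a\in\Delta_{0}$ is even, and the same argument covers the strings of length two in type $A$ as well as the longer strings appearing in the remaining classical types.

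With these checks in hand, $v'\mapsto w$ extends to a well-defined $u(\g,\chi)$-module homomorphism $\phi\colon Z^{\chi}_{s_{\a}(\mathfrak{B})}(\l+\a)\to Z^{\chi}(\l)$, where $v'$ is the maximal vector of the source. It is nonzero because $w$ is a PBW basis element of $Z^{\chi}(\l)$, and both modules share the common dimension $2^{|\Phi_{1}^{+}|}p^{|\Phi_{0}^{+}|}$. Consequently: if $Z^{\chi}(\l)$ is simple, the image of $\phi$ exhausts it and the dimension forces $\phi$ to be an isomorphism; if instead the source is simple, then $\ker\phi=0$ and again $\phi$ is an isomorphism. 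In either case, both modules are simple together, which yields $f(\l)=0\Longleftrightarrow f(s_{\a}^{-1}(\l+\a))=0$.
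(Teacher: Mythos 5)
Your proposal is correct and follows essentially the same route as the paper: both show that $w=f_{\a}^{p-1}v\in Z^{\chi}(\l)$ is a maximal vector for the Borel $s_{\a}(\mathfrak{B})$ of weight $\l+\a$, use the universal property to obtain a nonzero $u(\g,\chi)$-homomorphism from the baby Verma module attached to $s_{\a}(\Phi^{+})$ and weight $\l+\a$ into $Z^{\chi}(\l)$, and then combine equality of dimensions with Theorem 3.3 to conclude that simplicity holds on one side if and only if it holds on the other. The only real difference is that you spell out (via the $\a$-string/positivity argument and the weight and nilpotency checks) what the paper dismisses with "it is easy to see that $f_{\a}^{p-1}v$ is a maximal vector," which is a reasonable amount of added detail.
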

\begin{proof}Let $\Phi^+$ be the fixed positive root system with which
 $Z^{\chi}(\l)$ and $f(\l)$ are defined.  Let $v$ be the maximal
vector that defines $Z^{\chi}(\l)$. Assume $Z_1^{\chi}(\l)$ is the
baby verma module defined with respect to the new positive root
system $s_{\a}(\Phi^+)$. It is easy to see that
$v'=:f^{p-1}_{\a}v\in Z^{\chi}(\lambda)$ is a maximal vector with
respect to $s_{\a}(\mathfrak{B})$ and has weight $\l+\a$.
\par Since $v'$ is homogeneous, the inclusion map $\mathbf
Fv'\rightarrow Z^{\chi}(\l)$ induces a nonzero $\mathbf Z_2$-graded
$u(\g,\chi)$-homomorphism $\kappa$: $Z_1^{\chi}(\l)\longrightarrow
Z^{\chi}(\l)$. So $\text{Im}\kappa$ is a nonzero $\mathbf
Z_2$-graded submodule of $Z^{\chi}(\l)$.  Note that
$\text{dim}Z^{\chi}_1(\l)=\text{dim}Z^{\chi}(\l)$.\par If $f(\l)\neq
0$, then the simplicity of $Z^{\chi}(\l)$ shows that $\kappa$ is
onto, hence an isomorphism, so that $Z_1^{\chi}(\l)$ is simple and
hence $f^{\a}(\l)\neq 0$. On the other hand, if $f^{\a}(\l)\neq 0$,
then $Z^{\chi}_1(\l)$ is simple, hence the $\mathbf Z_2$-graded
submodule $\text{ker}\kappa$ must be zero. Therefore $\kappa$ is an
isomorphism. This implies that $f(\lambda)\neq 0$, since
$Z^{\chi}(\l)$ is simple.
\end{proof}

Let $\a\in \Delta$ be  a simple root. Consider the subalgebra  $A$
of $\g$ with basis $e_{\a}$,$f_{\a}$, $h_{\a}$. If $\a\in \Delta_0$,
then $A\cong sl_2$. If $\a\in \Delta_1$, then $A$ is a restricted
Heisenberg algebra of rank 1 (see \cite[p. 149]{s}). By the standard
results, for each semisimple character $\chi\in A^*$. A simple
$u(A,\chi)$-module of highest weight $a\in \mathbf F$  has dimension
less than $\bar{p}$ if and only if $a+1\in
\mathbf{F}_{\bar{p}}\setminus 0$.\par For each simple root $\a\in
\Delta$, we take a parabolic subalgebra of $\g$:
$\mathfrak{P}=\mathfrak{B}+\mathbf{F}f_{\a}$. Let
$\mathfrak{N}=\mathfrak{N}_{\bar{0}}\oplus \mathfrak{N}_{\bar{1}}$
be the nilradical of $\mathfrak{P}$. Let $\chi\in \g_{\bar 0}^*$ be
semisimple. For each $\l\in X$, a simple $u(A,\chi_{|A})$-module $V$
can be extended to a $u(\mathfrak{P},\chi)$-module as follows:
$$
e_{\beta}V=0, \beta\in \Phi ^+\setminus {\a}, hv=\l (h)v \text{ for
each}\quad  h\in H.
$$
Then $u(\g,\chi)\otimes _{u(\mathfrak{P},\chi)}V$  has dimension
$$2^{\text{dim}\mathfrak{N}_{\bar{1}}}p^{\text{dim}\mathfrak{N}_{\bar{0}}}\text{dim}V.$$
Obviously  $u(\g,\chi)\otimes _{u(\mathfrak{P},\chi)}V$ is an
epimorphic image of $Z^{\chi}(\l)$.  If $\l(h_{\a})+1\in \mathbf
F_{\bar p}\setminus 0$, so that $\text{dim}V<\bar{p}$, then we have
$\text{dim}u(\g,\chi)\otimes
_{u(\mathfrak{P},\chi)}V<\text{dim}Z^{\chi}(\l)$, so $Z^{\chi}(\l)$
is not simple, hence we have $f(\l)=0$.\par Recall that
$f(\l)=t(h)(\l)$. Let $\mathcal {V}(g)$ denote the set of zeros in
$X$ of $g\in u(H,\chi)$. Then we get
$\mathcal{V}(h_{\a}+1-c)\subseteq \mathcal{V}(t(h))$ for every
$c\in\mathbf F_{\bar p}\setminus 0$.\par By the earlier discussion,
each element in $u(H,\chi)$ is a polynomial of the variables
$h_1,\dots,h_r$ ($h_1,\dots, h_{r+1}$ for $\g=gl(m|n)$). It is no
loss of generality to assume $h_{\a}=h_1$. For every $c\in
\mathbf{F}_{\bar{p}}\setminus 0$, using the Remainder Theorem, we
have $$t(h)=(h_1+1-c)q(h)+r(h),\quad  q(h),r(h) \in u(H,\chi),$$
where the variable $h_1$ is not appearing in $r(h)$. Taking the
value at a point in $\mathcal{V}(h_{\a}+1-c)$, we get $r(h)=0$. Thus
$f(\l)=t(h)(\l)$ is divisible by the polynomial
$\l(h_{\a})+1-c$.\par For the basis of $H$ given earlier, we define
$\rho\in H^*$ such that $\rho(h_{i})=1$, $i=1,\dots,r$, and
$\rho(h_{r+1})=0$ if $\g=gl(m|n)$. Note that $\l+\rho\in X$ for
 any $\l\in X$.
\begin{corollary}(1) For each $\a\in \Delta_0$, $[(\l(h_{\a})+\rho (h_{\a}))^{{p}-1}-1]|f(\l)$.\par
(2) For each $\a\in \Delta_1$, $\l(h_{\a})|f(\l)$. \end{corollary}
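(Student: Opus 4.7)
My plan is to apply the divisibility established in the paragraph immediately preceding the statement: for each simple root $\alpha \in \Delta$ and each $c \in \mathbf{F}_{\bar p}\setminus 0$, the element $\lambda(h_\alpha)+1-c$ of $u(H,\chi)$ divides $f(\lambda)$. Both parts of the corollary will then follow by combining these one-factor divisibilities over all admissible $c$.

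For part (2), where $\alpha \in \Delta_1$ and hence $\bar p = 2$, the set $\mathbf{F}_2\setminus 0$ contains only the element $c=1$, and that single divisibility gives $\lambda(h_\alpha) = \lambda(h_\alpha)+1-1 \mid f(\lambda)$ at once, with nothing else required.

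For part (1), where $\alpha \in \Delta_0$ and hence $\bar p = p$, the index $c$ ranges over all of $\mathbf{F}_p^* = \{1,\dots,p-1\}$. Writing $y = \lambda(h_\alpha)+1$, I would multiply the $p-1$ divisibilities $y-c \mid f(\lambda)$ together and invoke the classical factorization
$$\prod_{c\in\mathbf{F}_p^*}(y-c) \;=\; y^{p-1}-1,$$
which holds because every nonzero element of $\mathbf{F}_p$ is a simple root of $x^{p-1}-1$. This yields $\bigl((\lambda(h_\alpha)+1)^{p-1}-1\bigr) \mid f(\lambda)$. Finally, the definition of $\rho$ given just above the statement gives $\rho(h_\alpha) = 1$ for every $\alpha \in \Delta$ (since $h_\alpha = h_i$ for some $1\le i \le r$), so $\lambda(h_\alpha)+1 = \lambda(h_\alpha)+\rho(h_\alpha)$, matching the divisor claimed in (1).

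The only technicality is to justify combining divisibilities in $u(H,\chi)$, which is not an integral domain. For distinct $c,c'\in\mathbf{F}_p^*$ one has $(y-c)-(y-c') = c'-c \in \mathbf{F}^\times$, so the linear factors are pairwise coprime in $u(H,\chi)$, and divisibility by each implies divisibility by the product, either by a routine induction on the number of coprime factors or by observing that $u(H,\chi)$ is the coordinate algebra of the reduced finite set $X$, hence isomorphic to $\mathbf{F}^{|X|}$, in which divisibility is tested pointwise. I do not anticipate a real obstacle here: the substantive content of the corollary was already packaged into the preceding paragraph via parabolic induction from the rank-one subalgebra $A$, and what remains is this combinatorial repackaging.
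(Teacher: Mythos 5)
Your proof is correct and it supplies exactly the details the paper leaves implicit: the paper states the corollary without a separate proof, relying on the immediately preceding paragraph, which establishes divisibility of $f(\lambda)$ by each linear factor $\lambda(h_\alpha)+1-c$ for $c\in\mathbf{F}_{\bar p}\setminus 0$. Your combination of these factors via the identity $\prod_{c\in\mathbf{F}_p^*}(y-c)=y^{p-1}-1$, together with the pairwise-coprimality argument (valid in $u(H,\chi)$ even though it is not a domain, e.g.\ because it is the coordinate ring of the reduced finite set $X$), and the observation that $\rho(h_\alpha)=1$ for $\alpha\in\Delta$, is precisely what the paper intends.
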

\begin{lemma}
Let $\Phi=\Phi_0\cup \Phi_1$,
$\Phi^+=\Phi^+_0\cup\Phi_1^+,\Delta=\Delta_0\cup\Delta_1$ be the
root, positive root, simple root system of $\g$ respectively. Let
$W_0$ be the weyl group of $\g_{\0}$. Then for each $\beta\in
\Phi_1$, there is $\tau\in W_0$  such that $\tau (\beta)\in
\Delta_1$.
\end{lemma}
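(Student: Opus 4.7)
My plan is to exploit that $\g_{\bar{1}}$ is a $\g_{\bar{0}}$-module under the adjoint action, so $W_0$ permutes $\Phi_1$ (the odd roots are exactly the nonzero $\g_{\bar{0}}$-weights of $\g_{\bar{1}}$). It therefore suffices to prove that every $W_0$-orbit on $\Phi_1$ meets $\Delta_1$.

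First I would decompose $\g_{\bar{1}}$ into $\g_{\bar{0}}$-irreducibles $V_1\oplus\cdots\oplus V_k$; for the classical Lie superalgebras listed in Section~2 (from \cite{k1}), each $V_i$ is minuscule, so its weights form a single $W_0$-orbit. Given $\beta\in\Phi_1$ lying in some $V_i$, I would move $\beta$ by an element $\tau\in W_0$ into the closed fundamental Weyl chamber for $\Delta_0$ and call the result $\beta^{*}$. I would then argue $\beta^{*}\in\Delta_1$ by checking indecomposability in $\Phi^+$: a decomposition $\beta^{*}=\gamma_1+\gamma_2$ with $\gamma_i\in\Phi^+$ would force at least one $\gamma_i$ to be even, hence a nonzero non-negative integer combination of $\Delta_0$, contradicting dominance of $\beta^{*}$ against $\Delta_0$ (since $\beta^{*}-\alpha\notin\Phi^+$ for every $\alpha\in\Delta_0$). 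Hence $\beta^{*}\in\Delta$, and since it is odd, $\beta^{*}\in\Delta_1$.

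The main obstacle is the case $\g=gl(m|n)$: here $\g_{\bar{1}}$ splits into two non-isomorphic $\g_{\bar{0}}$-summands of opposite sign with respect to the distinguished Borel, and the highest-weight procedure yields a simple odd root only from the positive component, while the other yields the negative of a simple odd root. In practice one resolves this by invoking the natural involution $E_{ij}\mapsto -E_{ji}$ exchanging the two summands, or by observing that downstream the lemma is applied only for $\beta\in\Phi_1^{+}$, where a single component suffices. For the remaining types $B(m|n), C_n, D(m|n), F(4), G(3)$, the even Weyl group $W_0$ already contains the sign reflections needed to fuse $\Phi_1^{+}$ and $\Phi_1^{-}$ into a single $W_0$-orbit, and the argument closes by direct inspection from the tables in \cite{k1}.
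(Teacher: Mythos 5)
Your argument has several genuine gaps. First, the claim that each $\g_{\bar 0}$-irreducible summand of $\g_{\bar 1}$ is minuscule fails already for $B(m|n)$ with $m\geq 1$: there $\g_{\bar 1}$ is the tensor product of the natural modules of $so(2m+1)$ and $sp(2n)$, and its nonzero weights $\{\pm\e_i\pm\d_j\}\cup\{\pm\d_j\}$ split into two $W_0$-orbits of different length. The short odd roots $\pm\d_j$ are not $W_0$-conjugate to the odd simple root $\d_n-\e_1$ of the distinguished simple system, so the "single orbit" reduction breaks down (this in fact exposes a difficulty in the statement itself). Second, moving $\beta$ into the \emph{dominant} chamber for $\Delta_0$ produces the \emph{highest} odd root of the summand, which is generally \emph{not} simple; the simple odd root sits at the antidominant end. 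For $\g=gl(2|1)$ with $\Delta=\{\e_1-\e_2,\e_2-\d_1\}$, the dominant odd root is $\e_1-\d_1=(\e_1-\e_2)+(\e_2-\d_1)$, which decomposes, whereas the simple odd root $\e_2-\d_1$ is the lowest weight of $\g_1$. Your indecomposability test $\beta^{*}-\alpha\notin\Phi^{+}$ for all $\alpha\in\Delta_0$ holds at the lowest weight, not the highest, so the argument runs backwards. Third, the proposed $gl(m|n)$ repair via $E_{ij}\mapsto -E_{ji}$ is not an element of $W_0$ (it interchanges $\g_1$ and $\g_{-1}$ and is an outer automorphism), so it cannot furnish the required $\tau\in W_0$; your fallback of restricting to $\beta\in\Phi_1^{+}$ changes the statement.

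For comparison, the paper's proof is essentially an appeal to direct inspection of the explicit root data in Kac's tables, with the reflection formula $s_\alpha(\l)=\l-\l(h_\alpha)\alpha$ as the only tool. Your attempt at a structural, case-free argument is more ambitious, but in its present form the key minuscule and dominance steps are incorrect, so the proof does not close.
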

\begin{proof}
Let $\g_{\0}=\g_{\bar 0}^1\oplus \g_{\bar 0}^2$ as given in Section
2. Then $\Delta_0$ consist of simple roots of $\g^1_{\bar
0}$(denoted in terms of $\e_i$'s in \cite[p.51-53]{k1}) and those of
$\g^2_{\bar 0}$(denoted in terms of $\delta_i$'s in
\cite[p.51-53]{k1}). By definition, it is easy to get
$s_{\a}(\lambda)=\l-\l(h_{\a})\a$ for $\l\in H^*$ and $\a\in
\Delta_0$. Using the root system given in \cite[p.51-53]{k1}, one
sees that there is $\tau\in W_0$ such that $\tau (\beta)\in
\Delta_1$.
\end{proof}

\begin{theorem}$$
f(\lambda)=\pm\Pi _{\a\in \Phi^+_0}[(\l(h_{\a})+\rho
(h_{\a}))^{{p}-1}-1]\Pi_{\a\in \Phi^+_1}[\l(h_{\a})+\rho
(h_{\a})-1].
$$
\end{theorem}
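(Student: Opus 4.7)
The plan is to show each factor on the right-hand side divides $f(\l)$ in $u(H,\chi)$, then upgrade divisibility to equality (up to sign) by a PBW-degree comparison on the Harish-Chandra projection $t(h)\in U(H)$.

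For the simple roots, Corollary 3.4 already gives $(\l(h_\a)+1)^{p-1}-1 \mid f(\l)$ for $\a\in \Delta_0$ and $\l(h_\a)\mid f(\l)$ for $\a\in \Delta_1$; these match the claimed factors because $\rho(h_\a)=1$ on every simple coroot. To handle a general positive root $\beta$, I would recast Lemma 3.5 as the statement that $\mathcal V(f)\subseteq X$ is stable under the dot action $w\cdot\l=w(\l+\rho)-\rho$ of $W_0$: for $\a\in\Delta_0$ the identity $s_\a\rho=\rho-\a$ gives $s_\a^{-1}(\l+\a)=s_\a\cdot\l$, and iterating yields invariance under all of $W_0$ (generated by the simple reflections, since $\Delta_0$ is the simple system of $\g_{\0}$). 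Lemma 3.6, together with the standard fact that every root of $\g_{\0}$ is $W_0$-conjugate to a simple one, provides for each $\beta\in\Phi^+$ some $\tau\in W_0$ with $\tau\beta\in\Delta$. Using $\tau^{-1}h_{\tau\beta}=h_\beta$ and $\rho(h_{\tau\beta})=1$, one computes $(\tau\cdot\l)(h_{\tau\beta})=\l(h_\beta)+\rho(h_\beta)-1$, so the simple-root vanishing condition at $\tau\cdot\l$ becomes precisely the claimed condition at $\l$, and the factor for $\beta$ divides $f(\l)$ in $u(H,\chi)$.

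Since $u(H,\chi)$ is the coordinate ring of the finite set $X$, hence a product of copies of $\mathbf{F}$, divisibility is equivalent to inclusion of zero sets, and the full product of factors divides $f(\l)$. To force the quotient to be a scalar I would pass to the polynomial $t(h)\in U(H)$ and compare polynomial degrees. A PBW count bounds $\deg t(h)\le N:=(p-1)|\Phi_0^+|+|\Phi_1^+|$: in bringing $\prod e_{\a_i}^{\bar p-1}\prod f_{\a_i}^{\bar p-1}$ into PBW normal form, every commutator replaces two Lie elements with one, and for a monomial to survive the Harish-Chandra projection $\gamma$ every one of the $N$ factors of $e$ and $N$ factors of $f$ must be absorbed via commutators whose cumulative image lies in $H$, forcing at least $N$ absorptions and leaving at most $N$ factors in $H$. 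The right-hand side has the same degree $N$. Lifting the divisibility from each $u(H,\chi)$ to the UFD $U(H)$ — which works because Corollary 3.4 holds uniformly for every semisimple $\chi$, every $\l\in H^*$ lies in some $X_\chi$, and the relevant linear factors are radical — forces the quotient in $U(H)$ to be a constant. Finally, the rank-one identities $e_\a^{p-1}f_\a^{p-1}v_\l=-[(\l(h_\a)+1)^{p-1}-1]v_\l$ (Wilson's theorem $(p-1)!\equiv -1\pmod{p}$) and $e_\a f_\a v_\l=\l(h_\a)v_\l$ for odd $\a$ identify this constant as $(-1)^{|\Phi_0^+|}=\pm 1$.

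The main obstacle is the tight PBW degree bound. Cross-contractions $[e_\a,f_\beta]$ with $\a\ne\beta$ produce further root vectors that can be contracted in cascade, and one must verify combinatorially that no such cascade inflates the $h$-count beyond $N$. This is a weight/filtration argument, tracking that each absorption step both decreases the total Lie element count by one and contributes at most one $h$-factor to $\gamma$, so the final $h$-degree is bounded by half the original count of $2N$ Lie elements, yielding the bound $\deg t(h)\le N$ that matches the right-hand side exactly.
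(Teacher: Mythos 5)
Your proposal matches the paper's approach: the paper's own proof is essentially a citation, reducing Theorem 3.7 to Corollary 3.5, Lemma 3.6, and ``a similar argument as that for [Rudakov, Prop.~8]'', and your argument is precisely the unfolding of Rudakov's proof in the super setting — simple-root divisibility from Corollary 3.5, $W_0$-dot-action invariance of the zero set from Lemma 3.4, conjugation of every positive root to a simple one (Lemma 3.6 for odd roots), and a PBW degree bound $\deg t(h)\le (p-1)|\Phi_0^+|+|\Phi_1^+|$ to force equality up to the constant fixed by rank-one computations. Two small slips: you have swapped the numbering (what you call Corollary 3.4 and Lemma 3.5 are the paper's Corollary 3.5 and Lemma 3.4), and the degree-bound phrasing is loose, but the intended counting is sound — if $a$ commutations land in $H$ and $b$ produce root vectors, then $2N-2a-b=0$ and the surviving $H$-degree is $a\le N$, exactly matching the right-hand side.
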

\begin{proof} By Corollary 3.5,$$
\Pi _{\a\in \Delta_0}[(\l(h_{\a})+\rho
(h_{\a}))^{{p}-1}-1]\Pi_{\a\in \Delta_1}[\l(h_{\a})+\rho
(h_{\a})-1]|f(\l).
$$
 Then the theorem follows from  Lemma 3.6 and  a similar argument
  as that for \cite[Prop. 8]{r}.
\end{proof}

\section{The simplicity of the graded baby verma modules for
$\g=gl(m|n)$} Let $\g=gl(m|n)$. Then $\g=\g_{-1}\oplus \g_{\0}\oplus
\g_{1}$, where $$\g_{-1}=\oplus _{j>m,i\leq m}\mathbf Fe_{ji}\quad
\text{and}\quad \g_1=\oplus _{j>m,i\leq m}\mathbf Fe_{ij}.$$ The
reader may refer to \cite{k1} for more detailed description of
$\g_{-1}$ and $\g_{1}$.\par

 Let $\chi \in \g^*_{\bar{0}}$ and assume  $\chi(\mathfrak{N}_0^+)=0$, where
 $\mathfrak{N}^+_0=\oplus \sum_{i<j}\mathbf{F}e_{ij}\subseteq \g_{\bar{0}}$. Denote
$\g^+=\g_0+\g_1$. Let $M$ be a simple $u(\g_{\bar{0}},\chi)$-module.
We regard $M$ as a $u(\g^+,\chi)$-module by letting $\g_1M=0$. As in
\cite{zl}, we define the induced module
$Z^{\chi}(M)=u(\g,\chi)\otimes _{u(\g^+,\chi)}M$. We call it the
graded baby verma module.
\begin{lemma}Any simple $u(\g,\chi)$-module $\mathfrak{M}=\mathfrak{M}_{\0}\oplus\mathfrak{M}_{\bar{1}}$
is a quotient of $Z^{\chi}(M)$ for some simple
$u(\g_{\0},\chi)$-submodule $M\subseteq \mathfrak{M}_{\0}$ or
$M\subseteq\mathfrak{M}_{\bar 1}$.
\end{lemma}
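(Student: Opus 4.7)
The plan is to find a simple $u(\g_{\0},\chi)$-submodule $M$ of $\mathfrak{M}$ on which $\g_1$ acts trivially, and then invoke Frobenius reciprocity to obtain a surjection $Z^{\chi}(M)\twoheadrightarrow \mathfrak{M}$.

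First I would establish that the subspace
$$\mathfrak{M}^{\g_1}:=\{v\in\mathfrak{M}\mid \g_1\cdot v=0\}$$
is nonzero. The key point is that $\g_1$ is abelian (the bracket $[\g_1,\g_1]$ lies in $\g_2=0$), and every $x\in \g_1\subseteq \g_{\bar 1}$ satisfies $2x^2=[x,x]=0$ in $u(\g,\chi)$, hence $x^2=0$ thanks to $p>2$. Consequently the subalgebra of $u(\g,\chi)$ generated by $\g_1$ is a finite-dimensional algebra of exterior type whose augmentation ideal is nilpotent, and so this ideal must kill a nonzero vector of the finite-dimensional module $\mathfrak{M}$.

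Next I would verify that $\mathfrak{M}^{\g_1}$ is actually a $u(\g^+,\chi)$-submodule of $\mathfrak{M}$. It is $\mathbf{Z}_2$-graded because $\g_1$ is homogeneous; it is stable under $\g_{\0}$ since for $y\in\g_{\0}$, $x\in\g_1$, $v\in\mathfrak{M}^{\g_1}$ one has $x(yv)=y(xv)+[x,y]v=0$, using $[\g_{\0},\g_1]\subseteq\g_1$; and $\g_1$ acts by zero on it by construction. Inside $\mathfrak{M}^{\g_1}$ pick any simple $u(\g_{\0},\chi)$-submodule $M$. Because $\mathfrak{M}^{\g_1}$ is $\mathbf{Z}_2$-graded and $\g_{\0}$ preserves parity, $M$ can be arranged to lie entirely in $\mathfrak{M}_{\0}$ or entirely in $\mathfrak{M}_{\bar 1}$.

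Finally, Frobenius reciprocity turns the inclusion $M\hookrightarrow\mathfrak{M}$ of $u(\g^+,\chi)$-modules into a nonzero even $u(\g,\chi)$-homomorphism
$$Z^{\chi}(M)=u(\g,\chi)\otimes_{u(\g^+,\chi)}M\longrightarrow \mathfrak{M},$$
which by simplicity of $\mathfrak{M}$ must be surjective; this exhibits $\mathfrak{M}$ as a quotient of $Z^{\chi}(M)$. The only genuine obstacle is the first step of producing a nonzero common annihilator of $\g_1$; once that is in hand (from $p>2$ together with the abelianness of the purely odd subspace $\g_1$), the rest is the standard induction-restriction formalism. As an alternative route, one could deduce $\g_1\cdot M=0$ directly from Lemma~2.1(2) applied to $\g_1$ as a graded ideal of $\g^+$ acting nilpotently.
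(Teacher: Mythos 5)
Your proposal is correct and follows essentially the same route as the paper: find a simple submodule $M$ of $\mathfrak{M}_{\0}$ or $\mathfrak{M}_{\bar 1}$ on which $\g_1$ acts trivially, then use Frobenius reciprocity together with the ($\mathbf Z_2$-graded) simplicity of $\mathfrak{M}$ to obtain the surjection $Z^{\chi}(M)\twoheadrightarrow\mathfrak{M}$. The only difference is one of packaging: the paper gets the existence of such an $M$ by citing Lemma~3.12 of \cite{zl}, whereas you re-derive it in place, either from the nilpotence of the augmentation ideal of the exterior subalgebra generated by $\g_1$ (since $x^2=0$ for $x\in\g_1$ when $p>2$ and $[\g_1,\g_1]=0$), or from Lemma~2.1(2) applied to the graded ideal $\g_1\triangleleft\g^+$ acting nilpotently on a simple $u(\g^+,\chi)$-submodule. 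Your self-contained derivation is a nice supplement, but the logical structure is the same.
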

\begin{proof} By \cite[Lemma 3.12]{zl}, there is
a simple $u(\g^+,\chi)$-submodule $M\subseteq \mathfrak{M}_{\0}$ or
$M\subseteq\mathfrak{M}_{\bar 1}$. Then the inclusion map
$M\rightarrow \mathfrak M$ induces a $\mathbf Z_2$-graded
$u(\g,\chi)$-module homomorphism $f: Z^{\chi}(M)\rightarrow
\mathfrak{M}$. Since $\mathfrak{M}$ is simple $f$ is an epimorphism.
\end{proof}
 Let $v \in M$ be a maximal
vector of weight $\l$. Let $e_1,\dots, e_l$ (resp., $f_1,\dots,f_l$)
denote the following basis of $\g_1$(resp., $\g_{-1}$):
$$\{e_{ij}|i\leq m,j>m\}(resp., \{e_{ij}|i>m,j<m\}).$$ We put them
in the order of ascending heights. For $e_1\dots e_lf_1\dots f_l\in
\pi(U(\g)_0)$, assume $e_1\dots e_lf_1\dots f_lv=f_1(\lambda)v$.

\begin{proposition}
(1) If $f_1(\l)\neq 0$, then  $Z^{\chi}(M)$ is simple.\par (2) If
 $Z^{\chi}(M)$ is simple, then
$f_1(\lambda)\neq 0$.
\end{proposition}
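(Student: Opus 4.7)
The plan is to mirror the two-direction structure of Theorem 3.3, exploiting the fact that $[\g_1,\g_1]=[\g_{-1},\g_{-1}]=0$ in $gl(m|n)$, so that $u(\g_1)$ and $u(\g_{-1})$ are Grassmann algebras of dimension $2^l$ on the odd generators $e_1,\dots,e_l$ and $f_1,\dots,f_l$ respectively. By PBW one has $Z^{\chi}(M)=\bigoplus_{I\subseteq\{1,\dots,l\}} f_I\otimes M$, so $\dim Z^{\chi}(M)=2^l\dim M$. The key identity driving both directions is
$$ e_1\cdots e_l\cdot (f_1\cdots f_l\otimes v)=f_1(\lambda)\,v, $$
which follows by applying the Harish-Chandra decomposition $e_1\cdots e_l f_1\cdots f_l=t(h)+L'$ with $L'\in u(\g,\chi)\mathfrak{N}^+$ to $1\otimes v$: here $\mathfrak{N}^+=\mathfrak{N}_0^+\oplus\g_1$ annihilates $1\otimes v$ in $Z^{\chi}(M)$ (the $\mathfrak{N}_0^+$ part because $v$ is the maximal vector of $M$, the $\g_1$ part by construction of the induced module), so only $t(h)$ contributes and $t(\lambda)=f_1(\lambda)$ by the very definition of $f_1(\lambda)$.

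For part (1), assume $f_1(\lambda)\neq 0$ and let $V$ be a nonzero submodule of $Z^{\chi}(M)$; I would argue $V=Z^{\chi}(M)$ in three steps. First, pick $0\neq y=\sum_I f_I\otimes m_I\in V$ and choose $I_0$ of minimal cardinality among indices with $m_{I_0}\neq 0$; multiplying by $f_{I_0^c}$ kills every other term via the Grassmann relations $f_i^2=0$ and $f_if_j+f_jf_i=0$ (any such $I$ must overlap $I_0^c$, either by cardinality count or, when $|I|=|I_0|$ and $I\neq I_0$, by direct comparison), leaving a nonzero scalar multiple of $f_1\cdots f_l\otimes m_{I_0}$ in $V$. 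Second, since $\Lambda^l\g_{-1}$ is one-dimensional as a $\g_{\bar 0}$-module with character $c(u)=\mathrm{tr}(\mathrm{ad}\,u|_{\g_{-1}})$, the subspace $f_1\cdots f_l\otimes M$ is a $u(\g_{\bar 0},\chi)$-submodule of $Z^{\chi}(M)$ whose submodule lattice matches that of $M$, hence is simple; so $u(\g_{\bar 0},\chi)\cdot(f_1\cdots f_l\otimes m_{I_0})=f_1\cdots f_l\otimes M$, and in particular $V$ contains $f_1\cdots f_l\otimes v$. Third, the key identity produces $e_1\cdots e_l\cdot(f_1\cdots f_l\otimes v)=f_1(\lambda)v\in V$, and since $v$ generates $Z^{\chi}(M)$ (from $u(\g_{\bar 0},\chi)v=M$ together with PBW), we conclude $V=Z^{\chi}(M)$.

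For part (2), suppose $Z^{\chi}(M)$ is simple; then $w=f_1\cdots f_l\otimes v\neq 0$ generates $Z^{\chi}(M)$. By PBW $u(\g,\chi)=u(\g_1)u(\g_{\bar 0},\chi)u(\g_{-1})$; the relations $f_iw=0$ give $u(\g_{-1})w=\mathbf{F}w$, and the simplicity of $f_1\cdots f_l\otimes M$ as a $u(\g_{\bar 0},\chi)$-module gives $u(\g_{\bar 0},\chi)w=f_1\cdots f_l\otimes M$. Therefore
$$ Z^{\chi}(M) = u(\g_1)\cdot(f_1\cdots f_l\otimes M). $$
Fixing a basis $\{m_1=v,m_2,\dots,m_d\}$ of $M$, the spanning set $\{e_J\cdot(f_1\cdots f_l\otimes m_k):J\subseteq\{1,\dots,l\},\,1\leq k\leq d\}$ has exactly $2^l\dim M=\dim Z^{\chi}(M)$ elements, so it must be a basis. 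In particular the element for $J=\{1,\dots,l\}$, $k=1$ is nonzero; by the key identity this element is $f_1(\lambda)v$, forcing $f_1(\lambda)\neq 0$.

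The main obstacle I expect is Step 1 of Part (1)---the Grassmann top-reduction---which requires careful tracking of signs and overlap patterns in $u(\g_{-1})$; once that combinatorial step is in place, Steps 2 and 3, together with the clean dimension count of Part (2), follow as outlined.
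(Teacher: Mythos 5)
Your proof is correct and tracks the paper's argument closely; the three load-bearing pieces are the same: the Grassmann top-reduction in $u(\g_{-1})$ to produce $f_1\cdots f_l\otimes m\in V$, the Harish-Chandra identity $e_1\cdots e_l f_1\cdots f_l\cdot(1\otimes v)=f_1(\lambda)(1\otimes v)$ (justified exactly as you say, since $\mathfrak{N}_0^+\oplus\g_1$ kills $1\otimes v$), and the $2^l\dim M$ dimension count in part (2). The one genuine divergence is the middle step of part (1). Having reached $e_1\cdots e_l f_1\cdots f_l\,m\in V$, the paper picks $f\in u(\g_{\bar 0},\chi)$ with $fm=v$ and checks by a direct commutation computation that $[f,\;e_1\cdots e_l f_1\cdots f_l]=0$ (the adjoint action of $e_{ij}$, $i\neq j$, has trace zero on $\g_{\pm 1}$, and the $H$-traces on $\Lambda^l\g_1$ and $\Lambda^l\g_{-1}$ cancel), then applies $f$. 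You instead stop at $f_1\cdots f_l\otimes m$, observe that $f_1\cdots f_l\otimes M$ is a $u(\g_{\bar 0},\chi)$-submodule of $Z^{\chi}(M)$ (because $\Lambda^l\g_{-1}$ is one-dimensional, so the adjoint action is a scalar determinant twist), invoke its simplicity to land on $f_1\cdots f_l\otimes v\in V$, and only then apply $e_1\cdots e_l$. Both variants hinge on the same determinant-line fact about $\Lambda^l\g_{\pm 1}$ as a $\g_{\bar 0}$-module; yours is arguably a little more uniform, since the simple $\g_{\bar 0}$-submodule $f_1\cdots f_l\otimes M$ you exploit in (1) is precisely the $M'=f_1\cdots f_l M$ that the paper only introduces in part (2). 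Your part (2) is essentially verbatim the paper's.
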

\begin{proof}
(1) Let $V$ be any proper submodule of $Z^{\chi}(M)$. Taking $$0\neq
\sum f_{i_1}\dots f_{i_t} m_{i_1,\dots,i_t}\in V,\quad
m_{i_1,\dots,i_t}\in M,$$ let $t$ be the smallest such that
$f_{i_1}\dots f_{i_t} m_{i_1,\dots,i_t}\neq 0$. Let
$$\{j_1,\dots,j_{l-t}\}=\{1,\dots,l\}-\{i_1,\dots,i_t\}.$$ By
applying $f_{j_1}\dots f_{j_{l-t}}$, we  get $0\neq f_1\dots f_l
m\in V$ for some $0\neq m\in M$. Hence
$$
e_1\dots e_lf_1\dots f_l m\in V.
$$
 Let
$\g_{\bar{0}}=H\oplus \mathfrak{N}^-_{0}\oplus \mathfrak{N}^+_{0}$.
Since $M$ is a simple $u(\g_{\0},\chi)$-module, there is a
polynomial
$$f=\sum n^-_ih_in^+_i\in u(\g_{\0},\chi)=u(\mathfrak{N}^-_0,\chi)
u(H,\chi)u(\mathfrak{N}^+_0,\chi)$$ such that $fm$ is the maximal
vector $v\in M$. Note that for $i,j\leq m$ with $i\neq j$,
$$[e_{ij}, e_1\dots e_l]=[e_{ij}, f_1\dots f_l]=0, [h,e_1\dots e_lf_1\dots f_l]=0
\quad\text{for all}\quad h\in H.$$ So we get
$$[f,e_1\dots e_lf_1\dots
f_l]=[\Sigma n^-_ih_in^+_i, e_1\dots e_lf_1\dots f_l]=0.$$ This
shows that $e_1\dots e_lf_1\dots f_l v\in V$ and hence $v\in V$. So
(1) follows immediately.\par (2) Let $v_1=v,v_2,\dots,v_s$ be a
basis of $M$. Since $\g_1M=0$ and
$\text{dim}Z^{\chi}(M)=2^{\text{dim}\g_{-1}}\text{dim}M$. $$
\{f_{i_1}\dots f_{i_k}v_i|0\leq i_1<\dots <i_k\leq l,
i=1,\dots,s\}$$ is a basis of $Z^{\chi}(M)$. Let $M'=f_1\dots f_lM$,
then $\text{dim}M'=\text{dim}M=s$. \par Since $\g_{-1}M'=0$,
$$Z^{\chi}(M)=u(\g_1)M'=\langle e_{i_1}\dots e_{i_k}f_1\dots f_lv_i|
0\leq i_1<\dots<i_k\leq l, i=1,\dots, s\rangle.$$ Therefore,
$e_1\dots e_lf_1\dots f_l v\neq 0$, so that $f_1(\l)\neq 0$.
\end{proof}
Denote $f_0(\l)=\Pi_{\a\in \Phi^+_0}[(\l(h_{\a})+\rho
(h_{\a}))^{p-1}-1].$

\begin{lemma}Let $\chi$ be semisimple. Then $f_1(\l)=c
f(\l)/f_0(\l)$, $0\neq c\in\mathbf F$.
\end{lemma}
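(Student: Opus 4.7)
The plan is to mirror the proof of Theorem 3.7 with $f_1$ in place of $f$. By Theorem 3.7 we have $f(\lambda)/f_0(\lambda)=\pm\prod_{\a\in\Phi_1^+}(\lambda(h_\a)+\rho(h_\a)-1)$, so the lemma reduces to showing that $f_1(\lambda)$ equals a nonzero scalar multiple of this product.

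First I record that $f_1(\lambda)=t_1(h)(\lambda)$, where $t_1(h):=\gamma(e_1\cdots e_lf_1\cdots f_l)\in u(H,\chi)$ is the Harish--Chandra image of the weight-zero element $e_1\cdots e_lf_1\cdots f_l$. A PBW-style degree count---since each super-commutator $[e_i,f_j]$ lies in $\g_{\bar 0}$ and contributes at most one $h$-factor after the Harish--Chandra projection---shows that $t_1(h)$, written as a polynomial of degree $<p$ in each $h_k$, has total degree at most $l=mn=|\Phi_1^+|$.

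Second, I show that each linear factor $(\lambda(h_\a)+\rho(h_\a)-1)$, $\a\in\Phi_1^+$, divides $t_1(h)$. Start with $\a\in\Delta_1$, where the factor collapses to $\lambda(h_\a)$ because $\rho(h_\a)=1$. On the subset $\{f_0(\lambda)\neq 0\}\subseteq X$, the dimension matching $\dim Z^\chi(\lambda)=2^{mn}p^{|\Phi_0^+|}=\dim Z^\chi(M)$ forces the canonical surjection $Z^\chi(\lambda)\twoheadrightarrow Z^\chi(M)$ to be an isomorphism, so Theorem 3.3, Theorem 3.7, and Proposition 4.2 together yield
$$f_1(\lambda)\neq 0\iff Z^\chi(\lambda)\text{ is simple}\iff\prod_{\a\in\Phi_1^+}(\lambda(h_\a)+\rho(h_\a)-1)\neq 0.$$
In particular $f_1(\lambda)=0$ whenever $\lambda(h_\a)=0$ and $f_0(\lambda)\neq 0$. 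Writing $t_1(h)=h_\a q(h)+r(h)$ via the Remainder Theorem (with $r(h)$ independent of the variable $h_\a$) and using the key structural fact that $f_0$ is also independent of $h_\a$ for $\a\in\Delta_1$---the coroots of positive even roots are spanned by $h_{\a_k}$, $k\neq m$---one concludes $r(h)=0$ in $u(H,\chi)$, giving $h_\a\mid t_1(h)$. Lemma 3.6, together with the $W_0$-equivariance of $t_1(h)$ (since $W_0$ preserves $\Phi_1^+$ and permutes $\{e_1,\dots,e_l,f_1,\dots,f_l\}$ up to sign, the argument of Lemma 3.4 transfers), then extends divisibility from $\Delta_1$ to all of $\Phi_1^+$.

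Assembling the $l$ divisibilities gives $\prod_{\a\in\Phi_1^+}(\lambda(h_\a)+\rho(h_\a)-1)\mid t_1(h)$, and matching total degrees forces the quotient to be a constant $c\in\mathbf F$; the constant is nonzero because the equivalence displayed above already supplies some $\lambda$ with $f_1(\lambda)\neq 0$. The main obstacle I expect is the polynomial-divisibility step---upgrading from ``vanishing on $\{h_\a=0\}\cap\{f_0\neq 0\}$'' to full vanishing on $\{h_\a=0\}\subseteq X$ inside the finite coordinate ring $u(H,\chi)$---which hinges on the structural independence of $f_0$ from the simple-odd coordinate $h_\a$, together with a careful verification inside $u(H,\chi)$ that the Remainder Theorem genuinely closes.
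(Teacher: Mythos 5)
Your overall strategy matches the paper's: establish that $f_1(\lambda)=0$ exactly when $f(\lambda)/f_0(\lambda)=0$ on the open set $\{f_0\neq 0\}$ via the identification $Z^{\chi}(\lambda)\cong Z^{\chi}(M(\lambda))$, deduce $[f/f_0]\mid f_1$, and close with the degree count $\deg t_1\leq l=\deg(f/f_0)$. The paper does the divisibility in one stroke ($f/f_0$ is a product of linear factors, $f_1$ vanishes where it does on $\{f_0\neq 0\}$, so it divides), whereas you proceed factor-by-factor, invoking the Remainder Theorem for the simple odd coroot and then a $W_0$-equivariance argument to propagate to $\Phi_1^+$. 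That extra machinery is not what the paper uses, and more importantly it does not close the gap you yourself flag.

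The gap is real and your fix does not plug it. After the Remainder Theorem you know $r(h)$ vanishes on $\{f_0\neq 0\}\subseteq X$, and you observe that both $r$ and $f_0$ lie in the subring not involving $h_\alpha$. But this only reformulates the problem in that smaller coordinate ring: you still need $r$ to vanish on the finite set $\{f_0=0\}$ before you may conclude $r=0$ in $u(H,\chi)$, and nothing forces that. When $\chi(h_\beta)=0$ for some $\beta\in\Phi_0^+$ the set $\{f_0=0\}$ is genuinely nonempty, and a function on a finite set can certainly vanish on a proper subset without vanishing everywhere, since $u(H,\chi)\cong \mathbf{F}^{p^s}$ is a product of fields and divisibility there is literally containment of zero-sets. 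The structural independence of $f_0$ from $h_\alpha$ buys nothing toward bridging $\{f_0\neq 0\}$ to all of $X$. (The paper glides over exactly the same point.) The correct closure is to lift $t_1$, $\tilde F_0$, $\tilde F_1$ to $\chi$-independent elements of $U(H)=\mathbf{F}[h_1,\ldots,h_s]$ via the $\chi$-independent Harish--Chandra projection $U(\g)_0\to U(H)$ of Lemma 3.2; as $\chi$ ranges over semisimple characters the sets $X_\chi$ cover all of $H^*$, so $\tilde t_1$ vanishes on the Zariski-dense subset $\mathcal V(\ell_\alpha)\cap\{\tilde F_0\neq 0\}$ of the irreducible hyperplane $\mathcal V(\ell_\alpha)$, hence on all of it, giving divisibility in the polynomial ring where the degree count $\deg\tilde t_1\leq l$ finishes. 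As a secondary point, the claimed $W_0$-equivariance of $t_1$ is also not automatic: $W_0$ does not stabilize $\mathfrak N^+$, so it does not commute with the Harish--Chandra projection; the transfer needs a $\rho$-shift as in the paper's Lemma 3.4, which you do not carry out.
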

\begin{proof}If $\l$ is
such that $f_0(\l)\neq 0$ but $f(\l)=0$, then by \cite[Th. 3]{r},
$M(\l)$ is simple, where $M(\lambda)$ is the  simple
$u(\g_{\0},\chi)$-module with the unique maximal vector $v$ of
weight $\lambda$. Since $Z^{\chi}(M(\l))$ is not simple, we get
$f_1(\l)=0$.  By Theorem 3.7, $f(\l)/f_0(\l)\in u(H,\chi)$ is the
product of linear factors. So we have $[f(\l)/f_0(\l)]|f_1(\l)$. A
direct computation on $e_1\cdots e_lf_1\cdots f_lv$ shows that the
highest term of $\gamma(e_1\cdots e_lf_1\cdots f_l)$ is
$h_{\beta_1}\cdots h_{\beta_l}$, where $\beta_1,\dots,\beta_l$ are
all the positive odd roots of $\g$. So we get
$\text{deg}(f_1(\l))=l$. That is
$$\text{deg}(f_1(\l))=\text{deg}(f(\l))-\text{deg}(f_0(\l)).$$ Hence we get
$f_1(\l)=c f(\l)/f_0(\l)$ for some $c\neq 0$.\end{proof} By Theorem
3.7,  $f_1(\l)=\Pi_{\a\in\Phi^+_1}((\l+\rho)(h_{\a})-1)$(up to a
constant multiple). For $\a=\e_i-\delta_j\in \Phi_1^+$, it is easy
to see that $h_{\a}= e_{ii}+e_{i+j,i+j}$. So we get
$\rho(h_{\a})=m-i+j\in \mathbf F_p$. Let $\chi$ be semisimple. If
$\chi(h_{\a})\neq 0$, then $\l(h_{\a})\notin \mathbf{F}_p$.  So we
have

\begin{theorem} Let $\g=gl(m|n)$ and let $\chi$ be semisimple. If
$\chi(h_\a)\neq 0$ for every $\a\in \Phi^+_1$, then $Z^{\chi}(M)$ is
simple.\end{theorem}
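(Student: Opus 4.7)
The strategy is to combine the reduction in Proposition 4.2 (simplicity of $Z^{\chi}(M)$ is equivalent to $f_1(\l) \neq 0$), the explicit factorization of $f_1(\l)$ furnished by Lemma 4.3 and Theorem 3.7, and a Frobenius-type observation on the $p$-character relation. First I would let $v \in M$ be a maximal vector of weight $\l \in H^*$; since $M$ is a $u(\g_{\0},\chi)$-module, Lemma 2.1(1) forces the Frobenius relation
$$
\l(h)^p - \l(h) = \chi(h)^p \qquad \text{for every } h \in H.
$$
By Proposition 4.2, it suffices to establish $f_1(\l) \neq 0$.

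By Lemma 4.3 together with Theorem 3.7, up to a nonzero scalar one has
$$
f_1(\l) = \prod_{\a \in \Phi^+_1} \bigl( \l(h_\a) + \rho(h_\a) - 1 \bigr).
$$
Fix an odd positive root $\a = \e_i - \d_j \in \Phi^+_1$. As recorded just before the theorem, $\rho(h_\a) = m - i + j$, which lies in $\mathbf{F}_p$. Suppose for contradiction that the corresponding factor of $f_1(\l)$ vanishes: then $\l(h_\a) = 1 - (m - i + j) \in \mathbf{F}_p$, so $\l(h_\a)^p = \l(h_\a)$, and the Frobenius relation above gives $\chi(h_\a)^p = \l(h_\a)^p - \l(h_\a) = 0$, hence $\chi(h_\a) = 0$. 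This contradicts the hypothesis $\chi(h_\a)\neq 0$.

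Therefore every factor of $f_1(\l)$ is nonzero, so $f_1(\l) \neq 0$, and Proposition 4.2 delivers the simplicity of $Z^\chi(M)$. There is no substantive obstacle here once the preceding machinery is granted; the content of the theorem is exactly that the constants $1 - \rho(h_\a)$ appearing as the roots of the linear factors of $f_1$ all lie in the prime field, so the Frobenius condition on $\l$ prevents $\l(h_\a)$ from meeting any of them as long as $\chi(h_\a) \neq 0$.
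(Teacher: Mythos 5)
Your proposal is correct and follows essentially the same route as the paper: you combine Proposition 4.2 with the factorization $f_1(\l)=\prod_{\a\in\Phi^+_1}\bigl(\l(h_\a)+\rho(h_\a)-1\bigr)$ from Lemma 4.3 and Theorem 3.7, note that $\rho(h_\a)\in\mathbf F_p$, and use the Frobenius relation $\l(h_\a)^p-\l(h_\a)=\chi(h_\a)^p$ to conclude that $\chi(h_\a)\neq0$ forces $\l(h_\a)\notin\mathbf F_p$, so no linear factor vanishes. The paper compresses this to one sentence before the theorem statement, but the underlying argument is identical; your only addition is an explicit citation of Lemma 2.1(1) to justify the Frobenius relation, which the paper leaves implicit.
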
  Recall $Z^{\chi}(\lambda)=Z_{\0}\oplus Z_{\bar
1}$. Let $v\in Z_0$ be the maximal vector that defines
$Z^{\chi}(\lambda)$. Let $V_0:=u(\g_{\0},\chi)v\subseteq Z_{\0}$.
Since $[e_{ij}, \g_1]\subseteq \g_1$ and $[e_{ij},\g_{-1}]\subseteq
\g_{-1}$ for any $e_{ij}\in \g_{\0}$, we have $\g_1 V_0=0$. Let
$$V_0\supseteq V_1\supseteq \dots \supseteq V_s\supseteq 0$$ be a
composition series of $u(\g_{\0},\chi)$-submodules. Then it can also
be regarded as a composition series of $u(\g^+,\chi)$-modules. This
induces a filtration of $\mathbf Z_2$-graded $u(\g,\chi)$-submodules
of $Z^{\chi}(\lambda)$:
$$Z^{\chi}(\lambda)=u(\g,\chi)\otimes _{u(\g^+,\chi)}V_0\supseteq
u(\g,\chi)\otimes _{u(\g^+,\chi)}V_1\supseteq \dots \supseteq
u(\g,\chi)\otimes _{u(\g^+,\chi)}V_s\supseteq 0.$$ We see that each
factor module $$u(\g,\chi)\otimes
_{u(\g^+,\chi)}V_{i-1}/u(\g,\chi)\otimes V_i\cong u(\g,\chi)\otimes
_{u(\g^+,\chi)}V_{i-1}/V_i=Z^{\chi}(V_{i-1}/V_i), i=1,\dots, s$$ is
a graded baby verma module.
\begin{corollary} Let $\g=gl(m|n)$. If $f_1(\lambda)\neq 0$, then
the baby verma module $Z^{\chi}(\lambda)$ has a composition series
induced by that of the $u(\g_{\0},\chi)$ baby verma module
$u(\g_{\0},\chi)v$. In addition, the loewy length equals one if and
only if $f_0(\lambda)\neq 0$.
\end{corollary}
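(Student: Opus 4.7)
My plan is to use the filtration
$Z^{\chi}(\lambda) = u(\g,\chi) \otimes_{u(\g^+,\chi)} V_0 \supseteq u(\g,\chi) \otimes_{u(\g^+,\chi)} V_1 \supseteq \cdots \supseteq 0$
already exhibited before the statement, whose successive quotients are the graded baby Verma modules $Z^{\chi}(V_{i-1}/V_i)$. The first assertion reduces to showing that, under the hypothesis $f_1(\lambda) \neq 0$, each of these quotients is a simple $u(\g,\chi)$-module, so the filtration is itself a composition series. By Proposition 4.2 this amounts to verifying $f_1(\lambda_i) \neq 0$ for every $i$, where $\lambda_i$ denotes the highest weight of the simple $u(\g_{\0},\chi)$-module $V_{i-1}/V_i$.

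The key step will be to establish that $f_1$ is invariant under the dot action of $W_0 = S_m \times S_n$. Combining Lemma 4.3 with Theorem 3.7, one writes, up to a nonzero scalar, $f_1(\mu) = \prod_{\alpha \in \Phi_1^+} \bigl( (\mu+\rho)(h_\alpha) - 1 \bigr)$. Let $\rho_0$ denote the half-sum of positive even roots; the normalisation $\rho(h_\beta) = \rho_0(h_\beta) = 1$ for every even simple root $\beta$ forces $\rho - \rho_0$ to vanish on all even simple coroots, and so $\rho - \rho_0$ is $W_0$-invariant under the ordinary linear action. The explicit form $h_{\epsilon_i - \delta_j} = e_{ii} + e_{m+j,m+j}$ shows that each even reflection $s_\beta$ permutes $\Phi_1^+$ (it only permutes the $\epsilon_i$'s or the $\delta_j$'s among themselves), and a short substitution then yields $f_1(s_\beta \cdot \mu) = f_1(\mu)$ for every even simple $\beta$. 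Since $\chi$ is semisimple, the standard linkage principle for the reductive Lie algebra $\g_{\0}$ places each $\lambda_i$ in the $W_0$-dot orbit of $\lambda$, so $f_1(\lambda_i) = f_1(\lambda) \neq 0$ and each graded Verma factor is simple.

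For the Loewy length claim, if $f_0(\lambda) \neq 0$ then Lemma 4.3 gives $f(\lambda) \neq 0$ and Theorem 3.3 makes $Z^{\chi}(\lambda)$ simple, so the Loewy length equals one. Conversely, if $f_0(\lambda) = 0$, then the classical criterion of Rudakov applied to $\g_{\0}$ implies that $V_0 = u(\g_{\0},\chi)v$ is not simple, so the filtration above is nontrivial and produces a proper nonzero $u(\g,\chi)$-submodule of $Z^{\chi}(\lambda)$. Because $\chi$ is semisimple, Lemma 2.5(2) forces $Z^{\chi}(\lambda)$ to have a unique maximal $\mathbf{Z}_2$-graded submodule, so it cannot be semisimple unless it is already simple; but simplicity would force $f(\lambda) \neq 0$ and so $f_0(\lambda) \neq 0$, a contradiction. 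Thus the Loewy length strictly exceeds one. The main obstacle I anticipate is cleanly justifying the linkage principle in the middle paragraph in the present modular, semisimple-$\chi$ setting for the reductive Lie algebra $\g_{\0} = gl(m) \oplus gl(n)$, so that the $\lambda_i$ genuinely lie in the $W_0$-dot orbit of $\lambda$; once this is in hand, everything else is direct bookkeeping.
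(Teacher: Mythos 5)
The paper states this corollary without a written proof, leaving the argument implicit in the paragraph preceding it, so there is no ``paper proof'' to compare against directly. Your reconstruction is the natural one and, as far as I can tell, the intended one: use the filtration
$Z^{\chi}(\lambda)=u(\g,\chi)\otimes_{u(\g^+,\chi)}V_0\supseteq\cdots\supseteq 0$
already exhibited, note that its factors are the graded baby Verma modules $Z^{\chi}(V_{i-1}/V_i)$, and apply Proposition 4.2 to reduce simplicity of those factors to the nonvanishing of $f_1$ at the highest weight $\lambda_i$ of each $V_{i-1}/V_i$. Your verification that $f_1$ is $W_0$-dot-invariant is correct and complete: $W_0=S_m\times S_n$ permutes $\Phi_1^+$, one checks $s_\beta(h_\alpha)=h_{s_\beta\alpha}$ from $h_{\epsilon_i-\delta_j}=e_{ii}+e_{m+j,m+j}$, and $s_\beta\cdot\mu+\rho=s_\beta(\mu+\rho)$ then rearranges the product over $\Phi_1^+$. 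The Loewy-length argument is also correct: when $f_1(\lambda)\neq 0$, $f(\lambda)\neq 0\Leftrightarrow f_0(\lambda)\neq 0$, so Theorem 3.3 gives simplicity in the forward direction, while in the reverse direction $f_0(\lambda)=0$ forces $V_0$ to be non-simple (Rudakov) hence the filtration nontrivial, and Lemma 2.5(2) rules out a non-simple semisimple $Z^\chi(\lambda)$.

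The one real gap is exactly the one you flag: you need to know that each $\lambda_i$ lies in $W_0\cdot\lambda$, i.e.\ a linkage principle for $u(\g_{\0},\chi)$ with $\chi$ semisimple. Note this is genuinely needed and cannot be replaced by the cheaper observation that $\lambda_i-\lambda\in\mathbf Z\Phi_0$: if $\chi(h_\beta)=0$ for some $\beta\in\Phi_1^+$ then $(\lambda+\rho)(h_\beta)$ lies in $\mathbf F_p$, and a shift by an arbitrary element of $\mathbf F_p$ could land on $1$, so the nonvanishing of $f_1(\lambda)$ does not automatically propagate to $f_1(\lambda_i)$ without the orbit constraint. The linkage statement you need is however available in the cited sources (it is part of Rudakov's analysis of baby Verma modules for classical Lie algebras with semisimple $p$-character, cf.\ \cite{r}, and is also covered by the Friedlander--Parshall and Jantzen treatments under the paper's standing hypothesis $p>3$ for $\g_{\0}=\mathrm{gl}(m)\oplus\mathrm{gl}(n)$). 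You should cite it explicitly rather than leave it as an acknowledged loose end, but the logical architecture of your argument is sound and is, in my reading, the argument the author had in mind.
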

\section{$\g=gl(m|n)$}

\subsection{The Frobenius superalgebra}
\begin{definition}\cite{k1} Let $V=V_{\0}\oplus V_{\bar{1}}$ be a
$\mathbf{Z}_2$-graded space and $f$ is a bilinear form on $V$. Then
$f$ is called supersymmetric if
$f(a,b)=(-1)^{\mathrm{p}(a)\mathrm{p}(b)}f(b,a)$ for any $a,b\in
V_{\0}\cup V_{\bar{1}}$.
\end{definition}
\begin{definition}An associative $\mathbf{F}$-superalgebra $A$ is called a
Frobenius superalgebra if it has a non-degenerate invariant bilinear
form $f$. A Frobenius superalgebra is said to be supersymmetric if
$f$ is supersymmetric.
\end{definition}
Let $A=A_{\bar 0}\oplus A_{\bar 1}$ be a Frobenius superalgebra.  We
use the notation $A_L$ (resp., $A_R$) to denote the left (resp.,
right) regular $A$-module $A$. Let $$A_{\bar i}^*=:\{f\in
A^*|f(A_{\overline {i+1}})=0\}, \bar i\in \mathbf Z_2.$$ Then
$A^*=A^*_{\bar 0}\oplus A^*_{\bar 1}$. It is easy to check that
$A_L^*$ (resp., $A^*_R$) is a right (resp., left) $\mathbf
Z_2$-graded $A$-module.
\begin{lemma}
Let $A$ be a finite dimensional superalgebra. Then the following are
equivalent:\par (1) $A$ is Frobenious.\par (2) $A_L\cong (A_R)^*$,
$A_R\cong (A_L)^*$.
\end{lemma}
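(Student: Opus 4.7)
The plan is to mimic the classical Frobenius/duality argument, with the only super-flavored ingredient being compatibility with the $\mathbf{Z}_2$-grading. Recall that the invariant form $f$ on a Frobenius superalgebra is even, i.e., $f(A_{\bar{i}},A_{\bar{j}})=0$ whenever $\bar{i}+\bar{j}=\bar{1}$; this is precisely what is needed to match the definition $A^{*}_{\bar{i}}=\{g\in A^{*}\mid g(A_{\overline{i+1}})=0\}$ introduced in the excerpt. I also fix the convention that the left $A$-action on $(A_R)^{*}$ is $(x\cdot g)(u):=g(ux)$ and the right action on $(A_L)^{*}$ is $(g\cdot x)(u):=g(xu)$, which are the standard ones making the dualities referred to in the paragraph preceding the lemma functorial.

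For $(1)\Rightarrow(2)$, I would fix a non-degenerate invariant $f$ and define $\phi\colon A_L\to (A_R)^{*}$ by $\phi(a)(b):=f(b,a)$. Non-degeneracy of $f$ in the second argument makes $\phi$ injective, hence bijective by a dimension count. Evenness of $f$ gives $\phi(A_{\bar i})\subseteq A^{*}_{\bar i}$, so $\phi$ is $\mathbf{Z}_2$-graded. Finally, left $A$-linearity reduces to the associativity of $f$:
\[
(x\cdot\phi(a))(b)=\phi(a)(bx)=f(bx,a)=f(b,xa)=\phi(xa)(b).
\]
The companion isomorphism $A_R\cong(A_L)^{*}$ is obtained symmetrically from $\psi(a)(b):=f(a,b)$, which is then checked to be a $\mathbf{Z}_2$-graded right $A$-module map by the same computation read from the other side.

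For the converse, given a $\mathbf{Z}_2$-graded left $A$-module isomorphism $\phi\colon A_L\to (A_R)^{*}$, put $f(a,b):=\phi(b)(a)$. Bilinearity is immediate, injectivity and surjectivity of $\phi$ translate respectively into non-degeneracy of $f$ in the first and second argument, gradedness of $\phi$ recovers evenness of $f$ (via the same matching of parities as above), and invariance follows from the single computation
\[
f(xy,z)=\phi(z)(xy)=(y\cdot\phi(z))(x)=\phi(yz)(x)=f(x,yz),
\]
the middle step being precisely the left $A$-linearity of $\phi$. The second hypothesis $A_R\cong (A_L)^{*}$ is not really needed for this direction, but can be recovered from $f$ exactly as in the forward direction. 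No step is genuinely subtle; the only thing worth pinning down carefully is the sign/convention bookkeeping on the dual modules $A_L^{*}$ and $A_R^{*}$, so that a ``$\mathbf{Z}_2$-graded isomorphism'' corresponds exactly to evenness of the form. Once the conventions are fixed the classical Frobenius argument transfers verbatim.
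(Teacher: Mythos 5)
The paper does not actually spell out a proof of this lemma; it just refers to \cite[Th.~61.3, p.~414]{c} and says the argument is similar. Your proof reproduces precisely that classical Frobenius duality argument, carefully transported to the graded setting, so it is the intended approach and the computations (the associativity check $f(bx,a)=f(b,xa)$ for $(1)\Rightarrow(2)$ and the invariance check $f(xy,z)=\phi(yz)(x)=f(x,yz)$ for $(2)\Rightarrow(1)$) are correct with the conventions you fix.

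One point deserves a flag, though. You write ``\emph{Recall} that the invariant form $f$ on a Frobenius superalgebra is even.'' This is not recalled from anywhere: Definition~5.2 in the paper only asks for a non-degenerate invariant bilinear form and says nothing about its parity, and it is \emph{not} automatic that a non-degenerate invariant form can be replaced by an even one (if $f=f_0+f_1$ is the parity decomposition, both $f_0$ and $f_1$ are invariant, but neither piece need stay non-degenerate; a purely odd non-degenerate invariant form has $f_0=0$). So what you are really doing is adopting the convention that a Frobenius superalgebra carries an \emph{even} form. That convention is the right one here --- it is exactly what makes $\phi(a)=f(-,a)$ land in the subspace $A^*_{\bar i}=\{g\mid g(A_{\overline{i+1}})=0\}$ rather than in $A^*_{\overline{i+1}}$, and it is the parity that Proposition~5.4 produces for $u(\g,\chi)$ --- but it is an added hypothesis, not a consequence of the paper's wording. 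With that convention understood (equivalently, reading the isomorphisms in (2) as degree-preserving), your argument is complete and matches the cited classical proof.
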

The proof is similar to that of \cite[Th. 61.3, p.414]{c}.

 If we take
the polynomial ring $\mathcal {O}$ generated by $\{x^p-x^{[p]}|x\in
\g_{\bar{0}}\}$, then $\mathcal{O}$ is in the center of $U(\g)$. By
applying a similar argument as that for \cite[Prop.1.2]{f},
 one gets
\begin{proposition}Let $\g$ be a finitely dimensional restricted Lie
superalgebra. For any $\chi\in \g_{\bar{0}}$(considered as a linear
function on $\g$ by letting $\chi(\g_{\bar{1}})=0$),  $u(\g,\chi)$
is a Frobenius superalgebra. Moreover, $u(\g,\chi)$ is
supersymmetric if $str(ad x)=0$ for any $x\in \g$.
\end{proposition}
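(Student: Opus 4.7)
The plan is to adapt the classical Friedlander--Parshall argument (their Proposition~1.2, as cited in the statement) to the super setting, working with the PBW basis of $u(\g,\chi)$ exhibited in Section~2. Write
$$w_{0} := v_{1} v_{2} \cdots v_{n}\, u_{1}^{p-1} u_{2}^{p-1} \cdots u_{m}^{p-1}$$
for the ``top'' PBW monomial, and define a linear functional $\lambda : u(\g,\chi) \to \mathbf{F}$ by $\lambda(w_{0}) = 1$ and $\lambda(w) = 0$ on every other PBW monomial $w$. Setting $\beta(a,b) := \lambda(ab)$, the goal is to show that $\beta$ is a non-degenerate invariant bilinear form, whence $u(\g,\chi)$ is Frobenius by Lemma~5.3, and that $\beta$ is supersymmetric under the hypothesis $\mathrm{str}(\mathrm{ad}\, x) = 0$ for all $x\in\g$.

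For non-degeneracy, I would order PBW monomials lexicographically by exponent sequence. Given $0 \neq a \in u(\g,\chi)$, let
$$w_{a} = v_{1}^{b(1)} \cdots v_{n}^{b(n)}\, u_{1}^{a(1)} \cdots u_{m}^{a(m)}$$
be the largest monomial appearing in $a$ with non-zero coefficient $c_{a}$, and take
$$w' := v_{1}^{1-b(1)} \cdots v_{n}^{1-b(n)}\, u_{1}^{p-1-a(1)} \cdots u_{m}^{p-1-a(m)}.$$
A PBW rewriting in the associated graded super-commutative algebra shows that $w_{a} w'$ expands with leading PBW term $\pm w_{0}$ (with coefficient proportional to $c_{a}$), whereas every strictly smaller monomial $w_{1} < w_{a}$ in $a$, multiplied by $w'$, produces only PBW terms below $w_{0}$, since its total degree is strictly less. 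Hence the coefficient of $w_{0}$ in $aw'$ is non-zero, so $\beta(a, w') \neq 0$, proving non-degeneracy. Invariance $\beta(ab, c) = \beta(a, bc)$ follows at once from the definition $\beta(a,b) = \lambda(ab)$ and associativity in $u(\g,\chi)$; Lemma~5.3 then yields the Frobenius claim.

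For the supersymmetry statement, the condition $\lambda(ab) = (-1)^{\mathrm{p}(a)\mathrm{p}(b)}\lambda(ba)$ is equivalent to $\lambda$ annihilating every super-commutator $ab - (-1)^{\mathrm{p}(a)\mathrm{p}(b)}ba$. By bilinearity and the PBW basis, it suffices to verify this for $a = x$ a generator in $\{u_{i}, v_{j}\}$ and $b = w$ any PBW monomial. Commuting $x$ past the factors of $w$ by the super-Jacobi identity produces correction terms in which the coefficient of $w_{0}$ assembles into a scalar multiple of $\mathrm{str}(\mathrm{ad}\, x)$; the hypothesis then kills this coefficient, giving $\lambda(xw - (-1)^{\mathrm{p}(x)\mathrm{p}(w)} wx) = 0$ as required.

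The main obstacle is the sign bookkeeping in the super-Jacobi commutations and verifying that the coefficient of $w_{0}$ in the correction terms is exactly $\mathrm{str}(\mathrm{ad}\, x)$ rather than some other trace-like combination. In the classical Friedlander--Parshall computation the corresponding coefficient is $\mathrm{tr}(\mathrm{ad}\, x)$; the key super-theoretic point is to check that the odd basis vectors $v_{j}$ contribute with the opposite sign from the even basis vectors $u_{i}$ when one commutes $x$ past them, exactly recovering the supertrace. Once that sign identification is pinned down, the remainder of the argument transplants verbatim from the classical proof.
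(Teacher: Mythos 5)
Your proposal is the same argument the paper invokes: the paper's ``proof'' is literally the one-line citation ``by applying a similar argument as that for Friedlander--Parshall, Prop.~1.2,'' and what you have written out is precisely that adaptation (top PBW monomial $w_0$, the functional $\lambda$ picking out its coefficient, the associative form $\beta(a,b)=\lambda(ab)$, non-degeneracy by pairing with the complementary monomial, and supersymmetry reducing to $\mathrm{str}(\mathrm{ad}\,x)=0$). Two small points to tighten. First, for the non-degeneracy step you need a total order compatible with the PBW filtration degree (e.g.\ graded lexicographic), not plain lexicographic: if $w_a$ is only lex-largest, a lex-smaller monomial of $a$ could still have higher total degree and its product with $w'$ could then hit $w_0$; with a degree-refined order the passage to the associated graded super-commutative algebra makes the claim immediate, since there the top graded component is one-dimensional and the pairing $\mathrm{gr}_i\times\mathrm{gr}_{N-i}\to\mathrm{gr}_N$ is perfect. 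Second, invoking Lemma~5.3 is unnecessary: the definition of Frobenius superalgebra already asks for a non-degenerate invariant bilinear form, which is what $\beta$ is; Lemma~5.3 gives an equivalent characterization, not a needed step. The supertrace computation you leave as a sketch, which is also all the paper gives; the content is that commuting $x$ across $w_0$ through the $n$ odd generators $v_j$ contributes $-\mathrm{tr}((\mathrm{ad}\,x)|_{\g_{\bar 1}})$ while the $m(p-1)$ even factors contribute $(p-1)\mathrm{tr}((\mathrm{ad}\,x)|_{\g_{\bar 0}})\equiv-\mathrm{tr}((\mathrm{ad}\,x)|_{\g_{\bar 0}})$, so the net obstruction is $-\mathrm{str}(\mathrm{ad}\,x)$ as you anticipate.
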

\begin{definition}Let $\g$ be a restricted Lie superalgebra. We call
$\g$ unipotent if for any $x\in \g_{\0}$, there is $r>0$ such that
$x^{[p]^r}=0$.
\end{definition}

By Lemma 2.1(2), we get
\begin{corollary}If $\g$ is unipotent, then the trivial $\g$-module $\mathbf{F}$
 is the only simple $u(\g)$-module.
\end{corollary}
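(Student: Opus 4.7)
The plan is to show that every simple $u(\g)$-module $M$ is annihilated by all of $\g$, so that $M$ must be a simple $\mathbf{Z}_2$-graded vector space with trivial action, i.e., the $1$-dimensional trivial module $\mathbf{F}$. The target to aim for is Lemma 2.1(2) applied with $I=\g$: it suffices to show that the representation $\rho\colon \g\to gl(M)$ satisfies $\rho(x)$ nilpotent for every homogeneous $x\in \g$.

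First I would handle the even case. Let $x\in \g_{\0}$. Since $M$ has $p$-character $\chi=0$, the defining relations of $u(\g)=u(\g,0)$ give $\rho(x)^p=\rho(x^{[p]})$ on $M$. Iterating this identity yields
\[
\rho(x)^{p^r}=\rho(x^{[p]^r})
\]
for all $r\geq 1$. By the unipotency hypothesis, there is some $r$ with $x^{[p]^r}=0$, hence $\rho(x)^{p^r}=0$, so $\rho(x)$ is nilpotent.

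Next I would treat the odd case. For $x\in \g_{\bar 1}$, inside $U(\g)$ (and hence inside $u(\g)$) we have $2x^2=[x,x]\in \g_{\0}$, so $\rho(x)^2=\tfrac{1}{2}\rho([x,x])$ with $[x,x]\in\g_{\0}$. By the even case already established, $\rho([x,x])$ is nilpotent, and therefore $\rho(x)$ is nilpotent as well. This uses only $p>2$, which is standard in the modular super setting.

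With $\rho(x)$ nilpotent for every homogeneous $x\in \g$, Lemma 2.1(2) applied to the $\mathbf{Z}_2$-graded ideal $I=\g$ forces $\g\cdot M=0$. A simple $\g$-module with zero action is a simple $\mathbf{Z}_2$-graded vector space over $\mathbf{F}$, hence $1$-dimensional and (up to parity) isomorphic to the trivial module $\mathbf{F}$. The only genuine subtlety is the verification of $\rho(x)^2=\rho(x^2)$ for odd $x$, which is the mild sign check above; everything else is a direct application of Lemma 2.1(2) together with the unipotency hypothesis.
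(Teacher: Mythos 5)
Your proposal is correct and follows the route the paper itself intends: the paper's proof of Corollary 5.6 is the single line "By Lemma 2.1(2), we get," and you have simply filled in the omitted verification that $\rho(x)$ is nilpotent for homogeneous $x$ — iterating $\rho(x)^p=\rho(x^{[p]})$ for $x\in\g_{\0}$, and using $2x^2=[x,x]\in\g_{\0}$ for $x\in\g_{\bar 1}$ (where $p>2$ is assumed throughout). The concluding appeal to Lemma 2.1(2) with $I=\g$ and the reduction to a one-dimensional trivial module are exactly as the paper envisions.
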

\begin{lemma} If $\g$ is unipotent, then each $u(\g,\chi)$ has only
one simple module (up to isomorphism).
\end{lemma}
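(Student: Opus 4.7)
The plan is to show that any two simple $u(\g,\chi)$-modules $M$ and $N$ admit a nonzero $\g$-equivariant map between them, whence they are isomorphic by a Schur-type argument. The device is the $\mathbf Z_2$-graded $\g$-module $V=\mathrm{Hom}_{\mathbf F}(M,N)$, endowed with the natural super $\g$-action $(x\cdot\phi)(m)=\rho_N(x)\phi(m)-(-1)^{\mathrm{p}(x)\mathrm{p}(\phi)}\phi(\rho_M(x)m)$, where $\rho_M,\rho_N$ denote the structure maps of $M,N$.

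The first step is to verify that $V$ has $p$-character $0$, so that it becomes a $u(\g)$-module. For $x\in\g_{\0}$ the operator $x\cdot$ on $V$ is the difference $L_{\rho_N(x)}-R_{\rho_M(x)}$ of left and right multiplication, and these two commute; hence in characteristic $p$ we get $(x\cdot)^{p}\phi=\rho_N(x)^{p}\phi-\phi\rho_M(x)^{p}$. Since $M$ and $N$ both have $p$-character $\chi$, substituting $\rho_M(x)^{p}=\rho_M(x^{[p]})+\chi(x)^{p}\cdot\mathrm{id}_M$ and the analogous formula for $N$ makes the scalar contributions cancel, leaving $(x\cdot)^{p}\phi=x^{[p]}\cdot\phi$. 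Thus $V$ is a $u(\g,0)$-module.

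Next I would invoke Corollary 5.6: the only simple $u(\g)$-module is the trivial one $\mathbf F$. Since $V$ is nonzero and finite-dimensional, it contains a minimal nonzero $\mathbf Z_2$-graded submodule $W$, which must be simple, and hence by the corollary $W\cong\mathbf F$. This produces a nonzero homogeneous $\phi\in V$ annihilated by $\g$. Unpacking the definition of the action, the condition $\g\cdot\phi=0$ says precisely that $\phi\colon M\to N$ is a $\g$-equivariant map (of parity $\mathrm{p}(\phi)$). Because $M$ and $N$ are simple and $\phi$ is nonzero, $\ker\phi$ and $\mathrm{Im}\,\phi$ are proper graded sub/submodules of $M$ and $N$ respectively, forcing $\ker\phi=0$ and $\mathrm{Im}\,\phi=N$; thus $\phi$ is an isomorphism and $M\cong N$.

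The main obstacle is the $p$-character computation in the first step, but it reduces to the classical identity $(\mathrm{ad}\,a)^{p}=\mathrm{ad}(a^{p})$ valid in any associative algebra of characteristic $p$ (here applied to the associative superalgebra acting on $M\oplus N$); once that is in hand, the rest of the argument is formal and uses only Corollary 5.6 together with Schur's lemma in the $\mathbf Z_2$-graded setting.
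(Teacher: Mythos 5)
Your argument is correct and follows essentially the same route as the paper: form $\mathrm{Hom}_{\mathbf F}(M,N)$ as a $\mathbf Z_2$-graded $u(\g)$-module, use Corollary 5.6 to produce a nonzero homogeneous $\g$-invariant $\phi$, and conclude by a graded Schur argument. Your only addition is the explicit verification that this Hom-module has $p$-character $0$ (via $(L_{\rho_N(x)}-R_{\rho_M(x)})^p=L_{\rho_N(x)^p}-R_{\rho_M(x)^p}$ and cancellation of the $\chi(x)^p$ scalars), a step the paper leaves implicit.
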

\begin{proof}Let $M=M_{\0}\oplus M_{\bar 1}$  and $M'=M'_{\bar 0}\oplus M'_{\bar 1}$
be two simple $u(\g,\chi)$-modules. For $\bar i, \bar j\in \mathbf
Z_2$,  let $$\text{Hom}_{\mathbf F}(M,M')_{\bar i}=\{f\in
\text{Hom}_{\mathbf F}(M, M')| f(M_{\bar{j}})\subseteq M'_{\overline
{i+j}}\}.$$ Then
$$\text{Hom}_{\mathbf{F}}(M,M')=\text{Hom}_{\mathbf{F}}(M,M')_{\0}\oplus
\text{Hom}_{\mathbf{F}}(M,M')_{\bar{1}}$$ is a $u(\g)$-module. By
Corollary 5.6, $\text{Hom}_{\mathbf{F}}(M,M')_{\0}$ contains a
trivial submodule $\mathbf Fx$. Suppose $x=x_{\bar 0}+x_{\bar 1}$,
$x_{\bar i}\in \text{Hom}_{\mathbf{F}}(M,M')_{\bar i}$, $\bar i\in
\mathbf Z_2$. Then both $\mathbf Fx_{\bar i}$ are trivial
$u(\g)$-submodules. So we may assume $x$ is homogeneous. Therefore
$$\text{Hom}_{\g}(M,M')_{\0}\cup \text{Hom}_{\g}(M,M')_{\bar 1}\neq
0.$$ Take $$0\neq x \in \text{Hom}_{\g}(M,M')_{\0}\cup
\text{Hom}_{\g}(M,M')_{\bar 1},$$ then $x(M_{\bar 0})\oplus
x(M_{\bar 1})$ is a nonzero $\mathbf Z_2$-graded submodule of $M'$.
Since $M$ and $M'$ are simple,  $M\cong M'$.
\end{proof}
\begin{lemma}Let $\g$ be a finite dimensional unipotent Lie
superalgebra. Then the regular(both left and right) $u(\g)$-module
$u(\g)$ has  a unique trivial submodule $\mathbf Fv $. Therefore,
$v\in u(\g)_{\bar 0}\cup u(\g)_{\bar 1}$.
\end{lemma}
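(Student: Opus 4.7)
The approach is to combine the Frobenius property of $u(\g)$ with the fact that it is graded-local. Since $\g$ is unipotent, Corollary 5.6 says the trivial module $\mathbf F$ is the only simple $u(\g)$-module; so for any $u(\g)$-module $M$, $\mathrm{soc}(M)=\{v\in M:\g v=0\}$, and the trivial submodules of $u(\g)_L$ are exactly the $1$-dimensional (graded) subspaces of $\mathrm{soc}(u(\g)_L)$. Hence it suffices to show $\dim\mathrm{soc}(u(\g)_L)=1$.

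By Proposition 5.4 and Lemma 5.3, there is a graded left $u(\g)$-module isomorphism $u(\g)_L\cong (u(\g)_R)^*$. Under such a duality, the left socle of the dual of a right module is the dual of its top, so
$$\mathrm{soc}(u(\g)_L)\;\cong\;\bigl(u(\g)_R/\mathrm{rad}(u(\g)_R)\bigr)^*.$$
The radical of $u(\g)_R$ equals the augmentation ideal $J=u(\g)\cdot\g$ because Corollary 5.6 identifies $u(\g)/J\cong\mathbf F$ as the unique simple module. Therefore $\dim\mathrm{soc}(u(\g)_L)=\dim(u(\g)/J)=1$.

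So $\mathrm{soc}(u(\g)_L)$ is a $1$-dimensional $\mathbf Z_2$-graded subspace, and hence lies entirely in $u(\g)_{\bar 0}$ or in $u(\g)_{\bar 1}$. Its generator $v$ is therefore homogeneous, and $\mathbf F v$ is the unique trivial left submodule. The right regular case is handled symmetrically via the other half of Lemma 5.3, $u(\g)_R\cong (u(\g)_L)^*$. The main technical point to check is that the Frobenius isomorphism of Lemma 5.3 is truly compatible with the $\mathbf Z_2$-grading, so that graded socles correspond to duals of graded top quotients; this is routine from the explicit construction of the duality but requires a little bookkeeping with super signs.
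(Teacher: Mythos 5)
Your argument is correct and is essentially the same as the paper's, just phrased at a higher level of abstraction: the paper shows directly that any trivial vector $v$ satisfies $f(x,v)=f(1,xv)=0$ for all $x\in\g$, hence lies in the one-dimensional right orthogonal complement of $u(\g)\g$, while you repackage the same Frobenius pairing as the socle--top duality $\mathrm{soc}(u(\g)_L)\cong\bigl(u(\g)_R/\mathrm{rad}\bigr)^*$ together with the identification $\mathrm{rad}(u(\g))=u(\g)\g$ via Corollary 5.6 (a step the paper's concrete version sidesteps). Both proofs rest on exactly the same two facts --- $u(\g)$ is Frobenius and $u(\g)\g$ has codimension one --- and both deduce homogeneity of $v$ from uniqueness plus the fact that the set of trivial vectors is a $\mathbf Z_2$-graded subspace.
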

\begin{proof} We only give the proof for the left regular $u(\g)$-module.
By Corollary 5.6, there is $v\in u(\g)$ such that $\mathbf Fv$ is a
trivial $u(\g)$-submodule. Let $f$ be the non-degenerate invariant
bilinear form on $u(\g)$. By assumption, $f(x, v)=f(1, xv)=0$ for
any $x\in \g$. So $v$ is in the (right ) orthogonal complement of
$u(\g)\g$. Since $u(\g)\g$ has codimension 1 in $u(\g)$, its
orthogonal complement is 1-dimensional. This implies that $u(\g)$
has a unique trivial submodule $\mathbf Fv$. The uniqueness implies
that $v$ is homogeneous.
\end{proof}
\begin{lemma}Let $\g$ be a finite dimensional unipotent Lie
superalgebra. If $str(ad x)=0$ for all $x\in \g$. Let $\mathbf
Fv_L$(resp., $\mathbf Fv_R$) be  the unique trivial
$u(\g)$-submodule of $u(\g)_L$(resp., $u(\g)_R$). Then $\mathbf
Fv_L=\mathbf Fv_R$.
\end{lemma}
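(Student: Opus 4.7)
The plan is to give both $\mathbf{F}v_L$ and $\mathbf{F}v_R$ the same intrinsic description as the orthogonal complement of the augmentation ideal under the supersymmetric Frobenius form. First I would invoke Proposition 5.4: the hypothesis $\mathrm{str}(\mathrm{ad}\,x)=0$ for all $x\in\g$ guarantees that $u(\g)$ carries a non-degenerate, supersymmetric, invariant bilinear form $f$. By PBW the augmentation ideal satisfies $u(\g)\g=\g u(\g)$ and has codimension one, so both the left and right orthogonal complements $(u(\g)\g)^{\perp_L}$ and $(u(\g)\g)^{\perp_R}$ are one-dimensional by the non-degeneracy of $f$.

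Next I would retrace the orthogonality argument of Lemma 5.8 in order to pin down $v_L$ and $v_R$ intrinsically. For $v_L$, given $u\in u(\g)$ and $x\in\g$, invariance yields $f(ux,v_L)=f(u,xv_L)=0$ since $xv_L=0$, so $v_L$ lies in the right orthogonal complement of $u(\g)\g$, and hence $\mathbf{F}v_L=(u(\g)\g)^{\perp_R}$ by dimension. The same computation on the opposite side, exploiting $v_Rx=0$, gives $f(v_R,xu)=f(v_Rx,u)=0$, and combined with the identity $\g u(\g)=u(\g)\g$ this forces $\mathbf{F}v_R=(u(\g)\g)^{\perp_L}$.

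The final and decisive step is to identify these two one-dimensional subspaces. This is where supersymmetry enters: since $f(a,b)=(-1)^{\mathrm{p}(a)\mathrm{p}(b)}f(b,a)$ on $\mathbf{Z}_2$-homogeneous elements, vanishing of $f(y,v)$ for all homogeneous $y$ is equivalent to vanishing of $f(v,y)$, so $(u(\g)\g)^{\perp_L}=(u(\g)\g)^{\perp_R}$; here Lemma 5.8 is used once more to guarantee that $v_L$ and $v_R$ are themselves homogeneous, so the graded sign is harmless. Combining these identifications gives $\mathbf{F}v_L=\mathbf{F}v_R$. The real content is the observation that a non-degenerate supersymmetric invariant form identifies left and right orthogonal complements; the only delicate point is the parity bookkeeping in that step, and it is neutralized by the homogeneity of $v_L$ and $v_R$.
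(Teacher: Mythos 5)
Your proof is correct and follows essentially the same route as the paper: Proposition 5.4 supplies the non-degenerate supersymmetric invariant form, the Lemma 5.8 computation places $v_L$ and $v_R$ in the (one-dimensional) orthogonal complement of the augmentation ideal, and supersymmetry identifies the left and right complements. The paper compresses these steps into one sentence, but the underlying argument is identical.
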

\begin{proof} By assumption, $u(\g)$ is a supersymmetric superalgebra. Let $f$ be
the non-degenerate invariant bilinear form on $u(\g)$. The super
symmetry of $f$ implies that both $v_L$ and $v_R$ are in the right
orthogonal complement of $u(\g)\g$, which is 1-dimensional, so
$v_L=v_R$(up to a nonzero constant  multiple).\end{proof}
\begin{definition} \cite[p.15]{k1} A derivation of degree $s\in
\mathbf Z_2$ of a superalgebra $A$ is an endomorphism $D\in End_sA$
with property
$$D(ab)=D(a)b+(-1)^{s\mathrm{p}(a)}aD(b).$$\end{definition}
Let $\g$ be a Lie superalgebra. By the Jacobi identity, $ada$:
$b\rightarrow [a,b]$ is a derivation of $\g$.\par Assume $\g_{\bar
1}$ has a basis $u_1,\dots,u_l$. Then $U(\g)=U(\g)_{\0}\oplus
U(\g)_{\bar 1}$ is naturally $\mathbf Z_2$-graded, where
$$U(\g)_{\0}=\sum_{\overset{i_1<\cdots <i_s}{s \quad\text{is even}}}u_{i_1}\dots
u_{i_s}U(\g_{\0}), U(\g)_{\bar 1}=\sum_{\overset{i_1<\cdots <i_s}{s
\quad\text{is odd}}} u_{i_1}\dots u_{i_s}U(\g_{\0}).$$ Generally,
taking $x_1x_2\cdots x_k\in U(\g)$,
$x_1,\dots,x_k\in\g_{\0}\cup\g_{\bar 1}$, it is easy to see that
$\mathrm{p}(x_1\cdots x_k)=\sum^k_{i=1}\mathrm{p}(x_i)$.\par Let
$a\in \g_{\0}\cup\g_{\bar 1}$. We now extend $ada$ to a derivation
of $U(\g)$. Let $f\in U(\g)_{\theta}$, $\theta\in\mathbf Z_2$. We
define $$ ada (f)=af-(-1)^{\mathrm{p}(a)\mathrm{p}(f)}fa.$$ Use
induction we can show that
\begin{lemma}Let $f=x_1\cdots x_k\in U(\g)_{\theta}$, where $x_i\in\g_{\0}\cup\g_{\bar 1}$.
Then $$ada(f)=\sum (-1)^{\mathrm{p}(a)\mathrm{p}(x_1\cdots
x_{i-1})}x_1\cdots [a,x_i]x_{i+1}\cdots x_k.$$
\end{lemma}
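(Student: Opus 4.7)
The plan is to proceed by induction on $k$. For the base case $k=1$, the definition gives $ada(x_1) = ax_1 - (-1)^{\mathrm{p}(a)\mathrm{p}(x_1)} x_1 a = [a,x_1]$, which agrees with the claimed formula (the empty product $x_1\cdots x_0$ has parity $\bar 0$, so the sign is $+1$).

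The key intermediate step is to verify the super-Leibniz rule: for homogeneous $g,h \in U(\mathfrak{g})$,
$$ada(gh) = ada(g)\cdot h + (-1)^{\mathrm{p}(a)\mathrm{p}(g)}\, g\cdot ada(h).$$
This follows from a direct computation: starting from $a(gh) - (-1)^{\mathrm{p}(a)(\mathrm{p}(g)+\mathrm{p}(h))}(gh)a$ and inserting and subtracting the middle term $(-1)^{\mathrm{p}(a)\mathrm{p}(g)}(ga)h$, the expression telescopes into precisely the two claimed summands.

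For the inductive step, apply this identity with $g = x_1\cdots x_{k-1}$ and $h = x_k$, noting that $\mathrm{p}(g) = \mathrm{p}(x_1\cdots x_{k-1}) = \sum_{j<k}\mathrm{p}(x_j)$. The induction hypothesis expands $ada(g)$ as the sum running over $i = 1,\dots,k-1$; multiplying on the right by $x_k$ produces those terms of the desired sum. The remaining term $(-1)^{\mathrm{p}(a)\mathrm{p}(g)} g\cdot ada(x_k) = (-1)^{\mathrm{p}(a)\mathrm{p}(x_1\cdots x_{k-1})} x_1\cdots x_{k-1}[a,x_k]$ provides exactly the $i=k$ contribution. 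Assembling the pieces yields the formula.

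I do not anticipate any substantive obstacle: the content reduces to the super-Leibniz rule for the inner derivation $ada$ on $U(\mathfrak{g})$, and the only care required is in tracking the parity exponents, which is mechanical once that rule is established.
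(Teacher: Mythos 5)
Your proof is correct, and it takes essentially the same route the paper indicates: the paper simply says ``Use induction we can show that'' and omits the details, while you supply exactly that induction, with the super-Leibniz identity $ada(gh)=ada(g)h+(-1)^{\mathrm{p}(a)\mathrm{p}(g)}g\,ada(h)$ as the inductive step.
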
Recall $I_{\chi}$ for $\chi\in \g^*$ given in Section 2.
Let $\g$ be a restricted Lie superalgebra and let $a\in
\g_{\0}\cup\g_{\bar 1}$. For each generator
$x^p-x^{[p]}-\chi(x)^p\cdot 1\in U(\g)_{\0}$, it is easy to see that
$ada (x^p-x^{[p]}-\chi(x)^p\cdot 1)=0$. This induces a derivation on
$u(\g,\chi)$. We denote it also by $ada$. Note that $I_{\chi}$ is
generated by elements in $U(\g)_{\0}$. Then $u(\g,\chi)$ inherits
the $Z_2-gradation$ from that of $U(\g)$, and Lemma 5.11 also holds
in $u(\g,\chi)$.
\subsection{The Kac-weisfeiler Theorem }
The $\mathbf F$-vector space $gl(m+n)$ has a structure of the Lie
superalgebra  $\g=gl(m|n)$, and also a structure of   Lie algebra
$\mathrm{gl}(m+n)$. The two strutures share the same cartan
subalgebra $H=\langle e_{ii}|i=1,\dots,m+n\rangle$ and the
corresponding root space decomposition.\par
 Let $\g=gl(m|n)$. Then $\g_{\0}=gl(m)\oplus gl(n)$
 and $\g=H+\oplus \g_{\a}$, where $\g_{\a}=\mathbf Fe_{ij}$,
$1\leq i,j\leq m+n$. With respect to $H$, the root system of $\g$ is
given in \cite[p.51]{k1}. \par If we write $\e_{m+1}=\delta_1$,
\dots, $\e_{m+n}=\delta_n$, where $\e_i(e_{jj})=\delta_{ij}$, then
the root system of $gl(m|n)$   coincides with that of
$\mathrm{gl}(m+n)$(denoted by $\Phi$), and each simple root system
of $gl(m|n)$ coincides with  that of $\mathrm{gl}(m+n)$.\par  For
$\a\in \Phi^+$, we use $e_{\a}$(resp., $f_{\a}$) to denote the
positive(resp., negative) root vector. i.e., $\g_{\a}=\mathbf
Fe_{\a}$(resp., $\g_{-\a}=\mathbf Ff_{\a}$).\par If we use $[,]_s$
momentarily to denote the Lie product in $gl(m|n)$, then we see that
$$ (*)\quad\quad [e_{\a_i}, e_{\a_j}]_s=\pm [e_{\a_i}, e_{\a_j}],
[f_{\a_i},f_{\a_j}]_s=\pm[f_{\a_i},f_{\a_j}],$$$$
[e_{\a_i},f_{\a_j}]_s=\pm [e_{\a_i},f_{\a_j}]\quad\text{for}\quad
\a_i\neq \a_j.$$ For each positive root $\a\in \Phi^+_{1}$, we
define $s_{\a}(\beta)=:\tau_{\a}(\beta), \beta \in \Phi$, where
$\tau_{\a}$ is the reflection defined in the root system of
$\mathrm{gl}(m+n)$. The definition of $s_{\a}$ is justified by the
next lemma.
\begin{lemma} $s_{\a}(\Phi^+)$ is a positive root system of
$gl(m|n)$ with the simple root system
$s_{\a}(\Delta)$.\end{lemma}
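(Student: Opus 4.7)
The plan is to reduce this to the classical fact that a Weyl-group element of a reductive Lie algebra permutes positive (and simple) root systems among themselves, and then transfer the conclusion back to $gl(m|n)$ via the identification established just before the lemma. Recall that, under the convention $\epsilon_{m+j}=\delta_j$, the root system $\Phi$ of $gl(m|n)$ coincides as a subset of $H^*$ with the root system of $\mathrm{gl}(m+n)$, and the simple root systems on the two sides coincide as well. Since a positive root system inside $\Phi$ is determined by its associated simple root system (as the set of non-negative integer combinations of simple roots lying in $\Phi$), the positive root systems of $gl(m|n)$ coincide with those of $\mathrm{gl}(m+n)$.

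Next I would record the purely combinatorial observation: $\tau_\alpha$ is a linear bijection of $H^*$ that stabilises $\Phi$ and commutes with negation, so $\tau_\alpha(\Phi^+)\cup\bigl(-\tau_\alpha(\Phi^+)\bigr)=\Phi$ and $\tau_\alpha(\Phi^+)\cap\bigl(-\tau_\alpha(\Phi^+)\bigr)=\emptyset$. Moreover, $\tau_\alpha(\Phi^+)$ is closed under root-valued addition because $\Phi^+$ is: if $\tau_\alpha(\beta)+\tau_\alpha(\gamma)=\tau_\alpha(\beta+\gamma)\in\Phi$ for $\beta,\gamma\in\Phi^+$, then $\beta+\gamma\in\Phi^+$ and hence its image lies in $\tau_\alpha(\Phi^+)$. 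This is exactly what it means for $\tau_\alpha(\Phi^+)$ to be a positive root system of $\mathrm{gl}(m+n)$.

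Analogously, the simple root system of $\tau_\alpha(\Phi^+)$ is exactly $\tau_\alpha(\Delta)$: any element of $\tau_\alpha(\Phi^+)$ has the form $\tau_\alpha(\beta)$ with $\beta=\sum n_i\alpha_i$ a $\mathbf{Z}_{\geq 0}$-combination of $\Delta$, so $\tau_\alpha(\beta)=\sum n_i\,\tau_\alpha(\alpha_i)$ is such a combination of $\tau_\alpha(\Delta)$, and linear independence is preserved under the bijection $\tau_\alpha$. Translating back through the identification of the first paragraph, $s_\alpha(\Phi^+)=\tau_\alpha(\Phi^+)$ is a positive root system of $gl(m|n)$ and $s_\alpha(\Delta)=\tau_\alpha(\Delta)$ is its simple root system, as required. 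I do not expect any obstacle beyond this book-keeping; the only delicate point is the identification of positive/simple systems between the super and non-super settings, which is precisely the content of the preparatory paragraph immediately preceding the lemma.
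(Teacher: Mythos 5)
Your argument is correct, but it takes a genuinely different route from the paper's proof and it leans more heavily on the preparatory paragraph than the paper itself does. You work entirely at the combinatorial level: $\tau_\a$ is a bijection of $H^*$ stabilising $\Phi$, so $\tau_\a(\Phi^+)\sqcup(-\tau_\a(\Phi^+))=\Phi$ holds and closure under root-valued addition transfers automatically; and $\tau_\a$ carries indecomposable positive roots to indecomposable positive roots, so it carries $\Delta$ to the simple system of $\tau_\a(\Phi^+)$. You then invoke the identification of the root combinatorics of $gl(m|n)$ with that of $\mathrm{gl}(m+n)$ to conclude. The paper instead works at the level of Lie brackets and root vectors. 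It records that $\tau_\a(\Delta)$ is a minimal subset of $\tau_\a(\Phi^+)$ whose root vectors $\{e_\gamma,f_\gamma\}$ generate $\mathrm{gl}(m+n)$, satisfy $[e_\gamma,f_\beta]=\delta_{\gamma,\beta}h_\gamma$, and generate all positive root vectors; it then uses the relation $(*)$ (that the super bracket $[\ ,\ ]_s$ equals $\pm$ the ordinary bracket on root vectors) and $(**)$ (that $H$ acts the same way in both structures) to transport exactly these generation and commutation properties to $gl(m|n)$, thereby verifying directly that $\tau_\a(\Delta)$ is a simple root system of the superalgebra. The paper's proof is therefore self-contained: it does not take for granted that the combinatorial characterisation of a positive/simple root system is identical on the super and non-super sides, whereas you treat that identification (asserted informally just before the lemma) as a black box. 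If that identification is granted, your argument is a legitimate and shorter proof; if the lemma is meant to serve as a justification of precisely that identification, then your argument skirts the substance and the paper's bracket-level verification is the safer route.
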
\begin{proof}  Since $\tau_{\a}(\Phi^+)$
is a positive root system of $\mathrm{gl}(m+n)$ with simple roots
$\tau_{\a}(\Delta)$, $\tau_{\a}(\Delta)$ is a minimal subset of
$\tau_{\a}(\Phi^+)$ satisfying \par  (1) $\{e_{\gamma}|\gamma\in
\tau_{\a}(\Delta)\}$ generates $\{e_{\gamma}|\gamma\in
\tau_{\a}(\Phi^+)\}$, $\{f_{\gamma}|\gamma\in \tau_{\a}(\Delta)\}$
generates $\{f_{\gamma}|\gamma\in \tau_{\a}(\Phi^+)\}.$\par (2)
$\{e_{\gamma},f_{\gamma}|\gamma\in \tau_{\a}(\Delta)\}$ generates
$\mathrm{gl}(m+n)$.\par (3)
$[e_{\gamma},f_{\beta}]=\delta_{\gamma,\beta}h\in H$, $\gamma,\beta
\in \tau_{\a}(\Delta)$.\par Note that
$$(**)\quad [h,e_{\gamma}]_s=[h,e_{\gamma}]=\gamma(h)e_{\gamma}, \quad\text{for
every}\quad h\in H.$$ So each $e_{\gamma}$ above is also a root
vector in the Lie superalgebra $gl(m|n)$. Then  $(*)$ says that
$\{e_{\gamma}|\gamma\in \tau_{\a}(\Delta)\}$ is the set of root
vectors  of a simple root system $\tilde{\Delta}$ of $\g=gl(m|n)$,
and $\{e_{\gamma}|\gamma\in \tau_{\a}(\Phi^+)\}$ is the set of root
vectors of the corresponding positive root system
$\tilde{\Phi}^+$.\par By $(**)$, we have
$\tilde{\Delta}=\tau_{\a}(\Delta)$ and
$\tilde{\Phi}^+=\tau_{\a}(\Phi^+)$; that is, $\tau_{\a}(\Phi^+)$ is
a positive root system of $gl(m|n)$ with the simple root system
$\tau_{\a}(\Delta)$.\end{proof}\begin{lemma} Let $I=I_{\0}\oplus
I_{\bar 1}$ be a restricted ideal of a subalgebra $\g'$ of the Lie
superalgebra $\g$. Assume $\chi(I)=0$ and $u(I)\subseteq
u(\g',\chi)$ be the reduced enveloping algebra of $I$. Let $\mathbf
F v_L$ be the unique 1-dimensional trivial submodule of $u(I)$.
Suppose $v_L=x_1^{i_1}\dots x^{i_m}_m$, where $x^{i_k}_k\in I_{i_k}$
for each $i_k\in \mathbf Z_2$. Then for every homogeneous $a\in
\g'$, $ad a(v_L)=\l v_L$ for some $\l\in \mathbf F$. If $ad a$ is
nilpotent, then $\l=0$.
\end{lemma}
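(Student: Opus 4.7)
The plan is to combine the graded derivation property of $ada$ on $u(I)$ with the fact, established within the proof of Lemma 5.8, that the space of $I$-invariants in the left regular module $u(I)_L$ is exactly $\mathbf Fv_L$. The first step is to extend $ada$ from $\g'$ to a graded derivation of $U(\g')$ and descend it to a graded derivation of $u(\g',\chi)$, as in the discussion preceding Lemma 5.11; since $I$ is a restricted ideal of $\g'$ with $\chi(I)=0$, this derivation stabilizes the subalgebra $u(I)\subseteq u(\g',\chi)$.

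For the first assertion, I would take any homogeneous $b\in I$ and apply $ada$ to the identity $bv_L=0$ to get
$$0 = ada(bv_L) = [a,b]v_L + (-1)^{\mathrm{p}(a)\mathrm{p}(b)}\, b\cdot ada(v_L).$$
Since $I$ is an ideal, $[a,b]\in I$, hence $[a,b]v_L=0$, and therefore $b\cdot ada(v_L)=0$ for every homogeneous $b\in I$. Thus $ada(v_L)$ lies in the space $u(I)^I$ of $I$-invariants of $u(I)_L$. Reapplying the key step inside the proof of Lemma 5.8 to $u(I)$ (a Frobenius superalgebra by Proposition 5.4), $u(I)^I$ coincides with the right orthogonal complement of the codimension-one subspace $u(I)I$ under the non-degenerate invariant form, hence is $1$-dimensional and equal to $\mathbf Fv_L$. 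Consequently $ada(v_L)=\l v_L$ for some $\l\in\mathbf F$.

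For the second assertion, assume $(ada)^N=0$ on $\g'$. Using Lemma 5.11 inductively, $(ada)^k(x_1\cdots x_m)$ is a signed sum of monomials in which $k$ copies of $ada$ are distributed among the $m$ factors $x_i$; as soon as $k\geq mN$, the pigeonhole principle forces some factor to absorb at least $N$ copies, so every summand vanishes and $(ada)^k(v_L)=0$. Iterating the relation $ada(v_L)=\l v_L$ from the first part yields $(ada)^k(v_L)=\l^k v_L$, so $\l^k=0$ and hence $\l=0$. The point demanding most care is the identification $u(I)^I=\mathbf Fv_L$; the rest is routine derivation bookkeeping. Note also that the hypothesis tacitly requires $I$ to be unipotent, in order that Lemma 5.8 apply to $u(I)$ in the first place.
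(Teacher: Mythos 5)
Your proof of the first assertion is correct and, modulo cosmetic rearrangement, the same as the paper's: the paper expands $x\,ada(v_L)$ directly and uses $xv_L=0$ together with $[x,a]v_L=0$ to conclude $x\,ada(v_L)=0$ for every homogeneous $x\in I$, whereas you apply $ada$ to the identity $bv_L=0$ and invoke the derivation property — algebraically the same manipulation. Both routes then appeal to the uniqueness of the trivial submodule of $u(I)_L$ (Lemma 5.8) to conclude $ada(v_L)=\lambda v_L$, and your identification of $u(I)^I$ with the right orthogonal complement of $u(I)I$ is exactly the key step inside Lemma 5.8's proof. The paper's printed proof, however, stops there and says nothing about the second assertion that $ad a$ nilpotent forces $\lambda=0$; your pigeonhole argument via Lemma 5.11 — distributing the iterated derivation over the finitely many factors of the monomial $v_L$ so that $(ada)^k(v_L)=0$ once $k$ is large, then comparing with $(ada)^k(v_L)=\lambda^k v_L$ — is a correct way to supply that missing step. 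Your remark that the lemma tacitly assumes $I$ unipotent (so that Lemma 5.8 applies to $u(I)$ at all) is also accurate, and it is satisfied in the one place the lemma is used, since $F_{i-1}$ is spanned by negative root vectors and is therefore unipotent.
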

\begin{proof} Let $x\in I$ be homogeneous.
Then
$$xada(v_L)=x(av_L-(-1)^{\mathrm{p}(v_L)}v_La)$$$$=[x,a]v_L+(-1)^{\mathrm{p}(x)\mathrm{p}
(a)}axv_L-(-1)^{\mathrm{p}(v_L)}xv_La=0.$$ Therefore $ada(v_L)=\l
v_L$ for some $\l\in \mathbf F$.
\end{proof}
Note that if $\g=\g_{\0}$, the lemma recovers \cite[Lemma
8.3]{f}.\par Let $\chi \in \g^*_{\0}$.
 Without loss of generality we assume
$\chi$
 vanishes on ${N}^+=\sum_{\a>0} \g_{\a}$. Let
$\chi=\chi_s+\chi_n$, where $\chi_s$ is the semisimple part and
$\chi_n$ is the nilpotent part. Let
$$\mathfrak{l}'=C_{\g}(\chi_s)=\{x\in \g|\chi_s([x,-])=0\}.$$ Then we
get
$$
\mathfrak{l}'=H\oplus\sum_{\chi(h_{\a})=0} \g_{\a}.
$$
Let $\mathfrak{l}=[\mathfrak{l}',\mathfrak{l}']$ and let
$\mathfrak{B}=H+N^+$. $\mathfrak{P}=\mathfrak{B}+\mathfrak{l}$ is a
parabolic subalgebra of $\g$. First we take the natural root system
of $\g$ with the simple roots
$$
\Delta=\{\e_1-\e_2,\dots, \e_m-\d_1,\dots,\d_{n-1}-\d_n\}.
$$
 Then $\mathfrak{N}=\sum_{\chi(h_{\a})\neq 0,\a>0}\g_{\a}$ is the
nilradical of $\mathfrak{P}$. Let
$\mathfrak{N}=\mathfrak{N}_{\0}\oplus \mathfrak{N}_{\bar{1}}$, where
$$\mathfrak{N}_{\bar i}=\sum_{\chi(h_{\a})\neq 0,\a\in \Phi^+_{\bar i}}\g_{\a}, \bar{i}\in \mathbf Z_2.$$
Let $\Phi_{\mathfrak{l}}$ be the root system of $\mathfrak{l}$.
Denote $\Phi'=\{\a\in \Phi^+|\chi(h_\a)\neq 0\}$. By \cite[p.
140]{k3}, $\chi(e_{\a})=0$ for every $\a\in \Phi^+\cup -\Phi'$.
\par

\begin{lemma}({\cite{h},\cite{f}}) There is an order of the roots in $\Phi'$:
$\Phi'=\{\a_1,\a_2,\dots, \a_s\}$ such that if $\Phi^+_1=\Phi^+$,
$\Delta_1=\Delta$. $\Phi^+_{i+1}=s_{\a_i}(\Phi^+_i)$ and
$\Delta_{i+1}=s_{\a_i}(\Delta_i)$. Then \par (1) $\Phi^+_i$ is a
system of positive roots in $\Phi$ with simple roots $\Delta_i$ and
$\a_i\in \Delta_i$.\par (2) For each $i\leq s$,
$\{-\a_1,-\a_2,\dots, -\a_i\}$ is a closed subsystem normalized by
$\Phi_{\mathfrak l}^+$.
\end{lemma}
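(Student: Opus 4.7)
The plan is to mimic the classical inductive construction of Humphreys and Friedlander--Parshall, legitimizing the reflections at the superalgebra level by Lemma 4.3. Induct on $i\in\{1,\dots,s\}$: at each step choose $\alpha_i$ to be an element of minimal height (measured in the original $\Phi^+$) in $\Phi'\setminus\{\alpha_1,\dots,\alpha_{i-1}\}$, then verify properties (1) and (2). The key structural input is that $\mathfrak{l}=[\mathfrak{l}',\mathfrak{l}']$ is a Lie subalgebra of $\g$, so its root set $\Phi_{\mathfrak l}$ is automatically closed under addition of roots, even though for $gl(m|n)$ the naive identity $h_{\beta+\gamma}=h_\beta+h_\gamma$ may fail in some parity configurations.

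For the base case $i=1$, suppose the minimal-height $\alpha_1\in\Phi'$ were not in $\Delta$. Then $\alpha_1=\beta+\gamma$ with $\beta,\gamma\in\Phi^+$ of strictly smaller height; minimality forces $\beta,\gamma\in\Phi^+_{\mathfrak l}$, and then $0\neq[\g_\beta,\g_\gamma]\subseteq\g_{\alpha_1}\cap\mathfrak l$ yields $\alpha_1\in\Phi^+_{\mathfrak l}$, contradicting $\alpha_1\in\Phi'$. For the inductive step, one first verifies (by induction together with Lemma 4.3) that $\Phi^+_i$ decomposes as $\Phi^+_{\mathfrak l}\,\cup\,(\Phi'\setminus\{\alpha_1,\dots,\alpha_{i-1}\})\,\cup\,\{-\alpha_1,\dots,-\alpha_{i-1}\}$. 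Choosing $\alpha_i$ of minimal original height and supposing $\alpha_i=\beta+\gamma$ in $\Phi^+_i$, one splits into cases according to whether each of $\beta,\gamma$ lies in $\Phi^+$ (base-case argument applies) or equals some $-\alpha_j$ with $j<i$; the latter possibility is ruled out by combining the normalization clause of~(2) from step $i-1$ with the closedness of $\Phi^+_{\mathfrak l}$.

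For property (2), closedness of $\{-\alpha_1,\dots,-\alpha_i\}$: if $-(\alpha_j+\alpha_k)$ is a root, then $\alpha_j+\alpha_k\in\Phi^+$, and the Lie-subalgebra closedness argument rules out $\alpha_j+\alpha_k\in\Phi^+_{\mathfrak l}$ (else one of $\alpha_j,\alpha_k$ would itself lie in $\Phi^+_{\mathfrak l}$), so $\alpha_j+\alpha_k=\alpha_l$ for some $l$, and the height ordering places $l$ within $\{1,\dots,i\}$. Normalization by $\Phi^+_{\mathfrak l}$ is verified analogously. The main obstacle is the combinatorial bookkeeping that at each step the minimal-height candidate survives all competing decomposition patterns; Lemma 4.3 is essential here by permitting the reflections $s_\alpha$ for $\alpha\in\Phi_1$ on the same footing as the even ones, thereby reducing the whole argument to the classical Lie-algebraic case proved in \cite{h} and \cite{f}.
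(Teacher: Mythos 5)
Your main structural observation agrees with the paper's: once $s_\alpha$ is legitimized for odd $\alpha$ (this is Lemma~5.12, not ``Lemma~4.3'' as you cite — there is no such lemma in the relevant section), the classical argument of \cite{h} and \cite{f} carries over, and the paper explicitly makes exactly this move and only sketches a ``$\chi$-related'' proof of part~(1) on top of it. Where you diverge is in how you propose to exhibit $\alpha_i\in\Delta_i$: the paper argues that $\Delta_i\cap\Phi'=\emptyset$ would force $\chi$ to be nilpotent (after reducing to $sl(m|n)$ so that $H\subseteq[\g,\g]$) and then shows the chosen $\alpha_i$ are distinct; you instead propose to choose $\alpha_i$ of minimal height measured in the \emph{original} $\Phi^+$ and run a case analysis.

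That choice does not work past the base case, and this is a genuine gap. Take $\g_{\bar 0}\supseteq sl_3$ with simple roots $\alpha,\beta$ and $\Phi'=\{\alpha,\beta,\alpha+\beta\}$ (so $\Phi^+_{\mathfrak l}=\emptyset$). With $\alpha_1=\alpha$ one gets $\Delta_2=s_\alpha(\Delta)=\{-\alpha,\alpha+\beta\}$. The minimal-original-height element of $\Phi'\setminus\{\alpha_1\}$ is $\beta$, but $\beta=-\alpha+(\alpha+\beta)$ is a sum of two elements of $\Delta_2$, hence $\beta\notin\Delta_2$; one must instead take $\alpha_2=\alpha+\beta$, which has \emph{larger} original height. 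Your case analysis does not detect this: when $\alpha_i=\gamma+\delta$ with $\gamma=-\alpha_j$ and $\delta\in\Phi'\setminus\{\alpha_1,\dots,\alpha_{i-1}\}$, the normalization clause of part~(2) is silent (it concerns sums with $\Phi^+_{\mathfrak l}$, and here $\delta\notin\Phi^+_{\mathfrak l}$), and in the $sl_3$ example $\Phi^+_{\mathfrak l}=\emptyset$ makes that clause vacuous. So the ``latter possibility'' you wish to rule out actually occurs. The fix is either to order by height in the \emph{new} system $\Phi^+_i$ at each stage, or to do what the paper does and simply pick any $\alpha_i\in\Delta_i\cap\Phi'$, whose existence is supplied by the nilpotency-of-$\chi$ dichotomy rather than a height argument.
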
\begin{proof}
Now that $s_{\a}$ is defined for $\a\in \Phi_1$,  the proof in
 \cite{h} and \cite{f} also works here.  We give here only a
$\chi$-related proof to the first part.  We treat $\chi$ as a linear
function of $\g$ with $\chi(\g_{\bar{1}})=0$. Note that we can
assume $\g=sl(m|n)$ for this lemma.\par

Suppose $\Delta\cap \Phi'=\phi$. Then we have $\Delta\subseteq
\Phi_{\mathfrak{l}}$. Since $\Phi_{\mathfrak{l}}$ is a closed root
system and $H\subseteq [\g,\g]$, we get $\chi(H)=0$. Thus, $\chi$ is
nilpotent and the case being trivial.
\par Taking $\a_1\in \Delta\cap\Phi'$, we let
$\Delta_2=s_{\a_1}(\Delta_1)$ and $\Phi^+_2=s_{\a_1}(\Phi^+_1)$.
Note that $s_{\a_i}(\Phi')\subseteq \Phi'$. Using a similar argument
as above, we can find $\a_2\in \Delta_2\cap \Phi'$. We continue this
process.  Suppose there is $i<j$ such that $\a_i=\a_j$. Then since
$-\a_i\in \Phi^+_j$ and  $ \a_j\in \Delta_j\in \Phi^+_j$, we get a
contradiction. Therefore $\Phi'$ can be ordered as required.
\end{proof}
 Let
$\Phi'=\{\a_1,\dots,\a_s\}$ be in the order as above. As in
\cite{f}, we let $F_i=\langle f_{\a_1},\dots,f_{\a_i}\rangle$. Then
since $\{\a_1,\dots,\a_i\}$ is closed,  $F_i$ is a subalgebra of
$\g$. Since $\chi(F_i)=0$, $u(F_i)=u(F_i,\chi)\subseteq u(\g,\chi)$.

\begin{theorem}
Let $M=M_{\0}\oplus M_{\bar 1}$ be a simple $u(\g,\chi)$-module and
$M'\subseteq M$ a $\mathbf Z_2$-graded simple
$u(\mathfrak{P},\chi)$-submodule. Then $M\cong u(\g,\chi)\otimes
_{u(\mathfrak{P},\chi)}M'$. In particular, we have $$dim M=p^{dim
\mathfrak{N}_{\0}}2^{dim \mathfrak{N}_{\bar{1}}}dim M'.$$
\end{theorem}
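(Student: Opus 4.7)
The plan is to exhibit the natural map
$$\phi: u(\g,\chi)\otimes_{u(\mathfrak{P},\chi)}M'\longrightarrow M,\qquad u\otimes m'\mapsto u\cdot m',$$
induced by the inclusion $M'\hookrightarrow M$, and show it is a $\mathbf Z_2$-graded isomorphism. Surjectivity is immediate from the simplicity of $M$, since the image contains $M'\neq 0$. By the PBW theorem, $u(\g,\chi)$ is a free right $u(\mathfrak{P},\chi)$-module of rank $p^{\dim\mathfrak{N}_{\0}}2^{\dim\mathfrak{N}_{\bar{1}}}$ with basis the ordered monomials in the root vectors $f_{\a_1},\dots,f_{\a_s}$ spanning the opposite nilradical $\mathfrak{N}^-$. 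Thus the dimension statement follows once $\phi$ is injective, equivalently, once we know that $\widetilde{M}:=u(\g,\chi)\otimes_{u(\mathfrak{P},\chi)}M'$ is itself simple.

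To prove the simplicity of $\widetilde{M}$, I would follow the strategy of the classical Kac-Weisfeiler theorem (cf.~\cite{f},\cite{h}), using the filtration $F_0\subset F_1\subset\dots\subset F_s=\mathfrak{N}^-$ supplied by Lemma 5.15. Since $\chi(F_i)=0$, each $u(F_i)$ is the restricted enveloping algebra of a unipotent restricted subalgebra, and by Lemma 5.8 its regular left module has a unique $\mathbf Z_2$-homogeneous trivial submodule $\mathbf F v_i^L$. As a left $u(\mathfrak{N}^-)$-module, $\widetilde M$ decomposes via PBW as $u(\mathfrak{N}^-)\otimes_{\mathbf F}M'$, so its $\mathfrak{N}^-$-invariants form the subspace $v_s^L\otimes M'$.

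Given any nonzero $\mathbf Z_2$-graded submodule $N\subseteq\widetilde{M}$, unipotency of $\mathfrak{N}^-$ (via Lemma 5.7) forces $N\cap(v_s^L\otimes M')\neq 0$. The pivotal step is then to show that $v_s^L\otimes M'$ is a $u(\mathfrak{P},\chi)$-submodule of $\widetilde{M}$ isomorphic (up to a character twist) to $M'$. Granting this, simplicity of $M'$ gives $v_s^L\otimes M'\subseteq N$, after which multiplication by a PBW basis of $u(\mathfrak{N}^-)$ from the right of $v_s^L$ sweeps out all of $\widetilde M$, forcing $N=\widetilde M$. The $\mathfrak{P}$-stability of $v_s^L\otimes M'$ rests on Lemma 5.15(2): although $\mathfrak{P}$ need not normalize $\mathfrak{N}^-$ as a set, the adjoint action of $\mathfrak{l}$ preserves each $F_i$, and Lemma 5.14 then says each homogeneous element of $\mathfrak{P}$ acts on $v_i^L$ by a scalar modulo terms whose images vanish on $1\otimes M'$.

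The principal obstacle will be the careful bookkeeping of super-signs and filtration indices when verifying the $\mathfrak{P}$-action on $v_s^L$: the super-derivation formula of Lemma 5.11, applied to the ordered PBW monomial defining $v_s^L$, produces a sum of terms which must be regrouped as contributions in $u(F_{i})$ for descending $i$, using the closedness of $\{-\a_1,\dots,-\a_i\}$ and the fact that $\chi$ vanishes on $\mathfrak{N}^-$. Once this compatibility is established, the scalar action of $\mathfrak{P}$ on $v_s^L$ yields the $u(\mathfrak{P},\chi)$-module isomorphism $v_s^L\otimes M'\cong M'$, and the rest of the argument is formal, giving both $\phi$ injective and the announced dimension formula.
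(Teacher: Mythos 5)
There is a genuine gap in the pivotal claim. You correctly reduce to showing $\widetilde{M}$ is simple and correctly identify $\widetilde{M}^{\mathfrak{N}^-}=v_s^L\otimes M'$, but the assertion that $v_s^L\otimes M'$ is a $u(\mathfrak{P},\chi)$-submodule is false. Lemma 5.14 requires $I$ to be an ideal of $\g'$, and $F_s$ is normalized by $\mathfrak{l}$ but \emph{not} by the nilradical $\mathfrak{N}\subset\mathfrak{P}$, so the lemma does not apply to the full parabolic. Concretely, for $e_\a\in\mathfrak{N}$ one has $e_\a\cdot(v_s^L\otimes m')=[e_\a,v_s^L]\otimes m'$ (since $\mathfrak{N}$ kills $1\otimes M'$), and $[e_\a,v_s^L]$ is a sum of PBW monomials of strictly lower degree in the $f$'s, which are nonzero in $\widetilde{M}$ and leave $v_s^L\otimes M'$. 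Already for $\g_{\0}=sl(2)$ with $\chi$ regular semisimple, $\mathfrak{P}=\mathfrak{B}$ and $\widetilde{M}=Z^\chi(\l)$, one computes $e\cdot f^{p-1}v=(2-\l(h))f^{p-2}v$, which is not a multiple of $f^{p-1}v$. The subsequent ``sweep out'' step is also broken: right multiplication is not a module operation, and left multiplication by $\mathfrak{N}^-$ annihilates $v_s^L\otimes M'$ by the very definition of $v_s^L$; the assertion that $u(\g,\chi)\cdot(v_s^L\otimes M')=\widetilde{M}$ is essentially equivalent to the simplicity you are trying to prove, so this step is circular.

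A telling symptom is that your argument never uses the defining condition $\chi(h_{\a_i})\neq 0$ for $\a_i\in\Phi'$, which is the heart of the theorem. The paper instead builds the filtration $W_1=1\otimes M'\subset W_2\subset\cdots\subset W_{s+1}=\widetilde{M}$ with $W_{i+1}=u(\mathfrak{P}_i,\chi)\otimes_{u(\mathfrak{B}_i,\chi)}W_i$, and at step $i$ proves $(W_{i+1}^{\mathfrak{N}_i})^{e_{\a_i}}=1\otimes W_i^{\mathfrak{N}_i}$ via the eigenvalue calculation: $e_{\a_i}$-invariance forces $h_{\a_i}w_k=(k-1)w_k$, and then $\chi(h_{\a_i})^p w_k=(h^p_{\a_i}-h_{\a_i})w_k=0$ kills all $w_k$ with $k\geq 1$. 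This per-root use of $\chi(h_{\a_i})\neq 0$ is precisely what lets one descend from $W_{i+1}$ to $W_i$ and ultimately to $1\otimes M'$, which \emph{is} visibly a $\mathfrak{P}$-submodule. Your one-shot approach (taking invariants under all of $\mathfrak{N}^-$ at once and hoping for $\mathfrak{P}$-stability of the socle) loses access to this eigenvalue obstruction and therefore cannot work; to repair the proof you would need to replace the socle-stability claim by the inductive filtration argument.
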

\begin{proof}Let $\mathfrak{B}_i$ denote the Borel subalgebra of $\g$ with
respect to $\Delta_i$ and
$\mathfrak{P}_i=\mathfrak{B}_i+\mathbf{F}f_{\a_i}$. Denote
$\mathfrak{N}_i=\oplus _{\a\in \Phi^+_i,\a\neq \a_i}\g_{\a}$. Then
$\mathfrak{N}_i$ is the nilradical of $\mathfrak{P}_i$. Since $M$ is
simple, $M$ is the quotient of $W=u(\g,\chi)\otimes
_{u(\mathfrak{P},\chi)}M'$. Note that $\mathfrak{B}_{i+1}\subset
\mathfrak{P}_i$. \par  We define an increasing filtration of $W$:
$$W_1=1\otimes M', W_{i+1}=u(\mathfrak{P}_i,\chi)\otimes
_{u(\mathfrak{B}_i,\chi)}W_i, i=1,\dots,s.$$ Clearly we have
$W_{s+1}=W$.\par
 Let $W'$ be a simple $u(\g,\chi)$-submodule of $W$ and choose $i$ minimal such that $W'\cap
W_{i+1}\neq 0$.\par  Suppose $i>0$. We shall derive a contradiction.
 We first show that
$$(1)\quad W_{i+1}^{\mathfrak{N}_i}=(u(\mathfrak{P}_i,\chi)\otimes
_{u(\mathfrak{B}_i,\chi)}W_i)^{\mathfrak{N}_i}=u(\mathfrak{P}_i,\chi)\otimes
_{u(\mathfrak{B}_i,\chi)}W_i^{\mathfrak{N}_i}.$$ For each root $\a$
of $ \mathfrak{N}_i$, we have $[e_{\a},f_{\a_i}]\in \mathfrak{N}_i$,
since $\mathfrak{N}_i$ is the nilradical of $\mathfrak{P}_i$. Let
$$x=\sum_{0\leq j\leq \bar{p}-1}f^j_{\a_i}\otimes x_j\in
W^{\mathfrak{N}_i}_{i+1}.$$  Then we obtain $f_{\a_i}^k x\in
W^{\mathfrak{N}_i}_{i+1}$ for every $0\leq k\leq \bar p-1$.\par
Assume $\a_i\in \Phi^+_0$. Since $\chi(f_{\a_i})=0$, so that
$f^p_{\a_i}=0$ in $u(\g,\chi)$, we have
$$f^k_{\a_j}x= \sum_{0\leq j\leq  p-1-k}f^{j+k}_{\a_i}\otimes x_j, 0\leq k\leq p-1.$$
Assume $\a_i\in \Phi^+_1$. Since $f^2_{\a_i}=0$ in $u(\g,\chi)$(
also in $U(\g)$), we have $$f^k_{\a_j}x= \sum_{0\leq j\leq
1-k}f^{j+k}_{\a_i}\otimes x_j$$ for $k=0,1$. \par Hence we have, for
any $\a_i\in \Phi^+$,
$$f^k_{\a_j}x= \sum_{0\leq j\leq  \bar p-1-k}f^{j+k}_{\a_i}\otimes x_j$$
for $0\leq k\leq \bar p-1$, and $f^{\bar p}_{\a_i} x=0$.\par
  We use induction
on $k$ to prove that $x_k\in W_i^{\mathfrak{N}_i}$. For $k=0$, take
an arbitrary $y\in \mathfrak{N}_i$.  Since $[f_{\a_i},y]\in
\mathfrak{N}_i\subseteq \mathfrak{B}_i$, we get
$$0=yf^{\bar p-1}_{\a_i}x=yf^{\bar p-1}_{\a_i}\otimes x_0=f^{\bar p-1}_{\a_i}\otimes
yx_0+\sum_{l<\bar p-1} f^l_{\a_i}\otimes w'_l, w'_l\in W_i.$$ This
gives us $y x_0=0$ and hence $x_0\in W_i^{\mathfrak{N}_i}$.
\par Assume  $x_k\in W_i^{\mathfrak{N}_i}$ for all $k<m\leq \bar
p-1$, and consider the case  $k=m$. For any $y\in \mathfrak{N}_i$,
we have
$$0=yf^{\bar p-1-m}_{\a_i}x=y\sum_{0\leq k\leq m}f^{\bar
p-1-m+k}_{\a_i}\otimes x_k.$$ Then we get $$f^{\bar
p-1}_{\a_i}\otimes yx_m+\sum _{l<\bar p-1}f^l_{\a_i}\otimes w'_l=0,
w'_l\in W_i.$$ This gives us $yx_m=0$, so that $x_m\in
W_i^{\mathfrak{N}_i}$. Hence $x\in u(\mathfrak{P}_i,\chi)\otimes
_{u(\mathfrak{B}_i,\chi)}W_i^{\mathfrak{N}_i}$.\par

Let $F_i=(F_i)_{\bar{0}}\oplus (F_i)_{\bar{1}}$. Since
$(F_i)_{\bar{0}}^{[p]}=0$ and $\chi(F_i)=0$, $u(F_i)\subseteq
u(\g,\chi)$ is  supersymmetric. \par  Let $\mathbf F v_L$ be the
unique 1-dimensional trivial submodule  in the left regular module
$u(F_{i-1})$. We may assume that $v_L=f^{\bar{p}-1}_{\a_{j_1}}\dots
f^{\bar{p}-1}_{\a_{j_{i-1}}}$, where $\a_{j_1}$,\dots
,$\a_{j_{i-1}}$ is the set $\a_1,\dots, \a_{i-1}$ put in the order
of ascending heights.  By Lemma 5.9, $v_R=v_L$.\par By definition,
$W_i$ is a free $u(F_{i-1})$-module. In particular, as a
$u(F_{i-1})$-submodule,
$$W_i\cong u(F_{i-1})\otimes_{\mathbf F} W_1\quad(\text{canonically imbedded in}
\quad u(\mathfrak{P},\chi)\otimes_{u(\mathfrak{B},\chi)}W_1).$$ Then
the fact $v_L=v_R$ shows that
$$W_i^{F_{i-1}}=v_L\otimes W_1.$$Since $F_{i-1}\subseteq \mathfrak{N}_i$,
$W_i^{\mathfrak{N}_i}\subseteq W^{F_{i-1}}_i$.  It follows that $
W^{\mathfrak{N}_i}_i\subseteq (v_L \otimes
W_1)^{\mathfrak{N}_i}.$\par For any $\a\in \Phi^+_{\mathfrak l}$ and
$w_1\in W_1$, if $e_{\a}v_L\otimes w_1=0$, then we get $v_L\otimes
e_{\a}w_1=0$ by Lemma 5.11.   Hence  $e_{\a} w_1=0$. Since
$\mathfrak{N}=\sum _{\a\in \Phi'}\g_{\a}$ is the nilradical of
$\mathfrak{P}$ and $\chi(\mathfrak{N})=0$, Lemma 2.1(2) shows that
$\mathfrak{N}W_1=0$. Note that $\mathfrak{N}_1\subseteq
\oplus_{\a\in\Phi^+_{\mathfrak l}}\g_{\a}+\mathfrak{N}$. So we get
$\mathfrak{N}_1 w_1=0$. This gives us
$$
W_i^{\mathfrak{N}_i}\subseteq v_L\otimes W_1^{\mathfrak{N}_1}.
$$
Since $\mathfrak{N}_i$ is an ideal of $\mathfrak{B}_i$ and
$e_{\a_i}\in \mathfrak{B}_i$, $e_{\a_i}W^{\mathfrak{N}_i}_i\subseteq
W^{\mathfrak{N}_i}_i$. Thus we get
$$
e_{\a_i}W^{\mathfrak{N}_i}_i\subseteq W^{\mathfrak{N}_i}_i\cap
e_{\a_i}v_L\otimes W^{\mathfrak{N}_1}_1.
$$
It follows that
$$
e_{\a_i}W^{\mathfrak{N}_i}_i\subseteq v_L\otimes
W^{\mathfrak{N}_1}_1\cap e_{\a_i}v_L\otimes
W^{\mathfrak{N}_1}_1\\
$$$$=v_L\otimes W^{\mathfrak{N}_1}_1\cap
(v_L\otimes e_{\a_i}W^{\mathfrak{N}_1}_1+[e_{\a_i},v_L]\otimes
W^{\mathfrak N_1}_1)$$$$=v_L\otimes W^{\mathfrak{N}_1}_1\cap
[e_{\a_i},v_L]\otimes W^{\mathfrak N_1}_1.
$$
 By Lemma 5.12(2), $\bar P_{i-1}=:\mathfrak{B}+F_{i-1}$ is a parabolic subalgebra
 of $\g$. Clearly $e_{\a_i}$ is in the nilradical of $\bar P_{i-1}$:
 $\bar N_{i-1}=\oplus_{\a\in\Phi_{\mathfrak{l}}^+}\g_{\a} +\langle e_{\a_i},\dots,e_{\a_s}\rangle$. Therefore,
 we have $[e_{\a_i}, f_{\a_j}]\in \bar N_{i-1}\subseteq \mathfrak{B}$ for each $j<i$.
 It follows that $$[e_{\a_i},v_L]\otimes
 W_1\subseteq \sum_{l_k<\bar p-1\quad \text{for some}\quad k}e^{l_1}_{\a_{j_1}}\cdots
 e^{l_{i-1}}_{\a_{j_{i-1}}}\otimes  W_1.$$ Then we get
 $e_{\a_i}W^{\mathfrak{N}_i}_i=0$.\par

 We  shall now prove that
$$(2)\quad (u(\mathfrak{P}_i,\chi)\otimes
_{u(\mathfrak{B}_i,\chi)}W_i^{\mathfrak{N}_i})^{e_{\a_i}}=1\otimes
W^{\mathfrak{N}_i}_i.$$ Let $$x=\sum_{0\leq k\leq \bar
p-1}f^k_{\a_i}\otimes w_k\in (u(\mathfrak{P}_i,\chi)\otimes
_{u(\mathfrak{B}_i,\chi)}W_i^{\mathfrak{N}_i})^{e_{\a_i}}.$$ Then we
get $$0=e_{\a_i}\sum_{0\leq k\le \bar p-1}f^k_{\a_i}\otimes w_k$$
$$
=\sum_{1\leq k\leq \bar p-1}[e_{\a_i},f^k_{\a_i}]\otimes w_k$$$$=
-\sum_{1\leq k\leq \bar p-1}2kf^{k-1}_{\a_i}\otimes
(k-1-h_{\a_i})w_k.$$ This gives $$(k-1-h_{\a_i})w_k=0\quad
\text{for}\quad 1\leq k\leq \bar p-1.$$ i.e.,
$h_{\a_i}w_k=(k-1)w_k$. It follows that
$$\chi(h_{\a_i})^pw_k=(h^p_{\a_i}-h_{\a_i})w_k=0, 0\leq k\leq \bar p-1.$$
So we obtain $w_k=0$,
 $1\leq k\leq \bar p-1$ and  hence $x=1\otimes w_0\in
W_i^{\mathfrak{N}_i}.$\par From (1) and (2), we get
$$
0\neq ((W'\cap W_{i+1})^{\mathfrak{N}_i})^{e_{\a_i}}\subseteq
(u(\mathfrak{P}_i,\chi)\otimes
_{u(\mathfrak{B}_i)}W^{\mathfrak{N}_i}_i)^{e_{\a_i}}=1\otimes
W^{\mathfrak{N}_i}_i.
$$
This implies that   $W'\cap W_i\neq 0$, a contradiction.\par Then we
must have $W'\cap (1\otimes W_1)\neq 0$, and hence $W'=W$. Therefore
$W$ is a simple $u(\g,\chi)$-module, so that $M\cong W$.\par Since
$\mathfrak{N}$ is the nilradical of $\mathfrak{P}$ with
$\chi(\mathfrak{N})=0$, Lemma 2.1(2) shows  that each simple
$u(\mathfrak{P},\chi)$-module is a simple
$u(\mathfrak{l}',\chi)$-module and vice versa. So $M'$ is a simple
$u(\mathfrak{l}',\chi)$-module. Let
$\mathfrak{l}'=\mathfrak{l}\oplus H'$, where $H'\subseteq H$ and
$[H', \mathfrak{l}']=0$. Using the fact that $H'\subseteq \g_{\0}$,
 Schur's Lemma (\cite[p.18]{k1}) shows  that $H'$ acts as scalar
multiplications on $M'$. Thus, $M'$ is a simple
$u(\mathfrak{l},\chi)$-module. Note that $\chi|_{\mathfrak{l}}$ is
nilpotent.
\end{proof}

\subsection{Standard Levi form}

The study of the simple $u(\g,\chi)$-modules is now reduced to the
case where $\chi$ is nilpotent. Let $\mathfrak{B}$ be a Borel
subalgebra of $\g$.
\begin{definition}We say that $\chi$ has standard Levi form if
$\chi(\mathfrak{B})=0$ and there is a subset $I\subseteq \Delta_0$
 such that

$$
\chi(f_{\a})\begin{cases}\neq 0 &\text{if $\a\in I$,}\\
=0 & \text{if $\a\in \Phi\setminus I$.}\end{cases} $$

\end{definition}
\begin{proposition}If $\chi$ is in the standard Levi form, then each
$Z^{\chi}(\l)$ has a unique maximal submodule.
\end{proposition}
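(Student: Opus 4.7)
The plan is to mirror the argument of Lemma~2.5(2) that handled the semisimple case: I will show that every proper $\mathbf{Z}_2$-graded submodule $V$ of $Z^{\chi}(\lambda)$ is contained in a fixed proper subspace, so that the sum of all proper submodules is itself proper and is the unique maximal $\mathbf{Z}_2$-graded submodule. The natural candidate is $W=u(\mathfrak{N}^-,\chi)\mathfrak{N}^- v$, the PBW-complement of the line $\mathbf{F}v$. In Lemma~2.5(2) the containment $V\subseteq W$ came from the nilpotency of any $f\in u(\mathfrak{N}^-,\chi)\mathfrak{N}^-$, which used $\chi(\mathfrak{N}^-)=0$. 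This fails now because $\chi(f_\alpha)\neq 0$ for $\alpha\in I$ forces $f_\alpha^p=\chi(f_\alpha)^p\in\mathbf{F}^\times$, so $f_\alpha$ is actually a unit in $u(\g,\chi)$.

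To work around this I would use the Levi decomposition $\g=\mathfrak{u}^-\oplus\mathfrak{l}\oplus\mathfrak{u}^+$ associated with $I\subseteq\Delta_0$, where $\mathfrak{l}\subseteq\g_{\bar 0}$ is the reductive Levi subalgebra generated by $H$ and the root spaces $\g_{\pm\alpha}$ for $\alpha\in I$, and $\mathfrak{u}^\pm$ are the opposite nilradicals. Since $I\subseteq\Delta_0$, the Levi $\mathfrak{l}$ is even and $\mathfrak{u}^\pm$ contains all odd root spaces of the respective sign together with the even root spaces outside $\Phi_I^\pm$. Standard Levi form gives $\chi(H)=\chi(\mathfrak{u}^\pm)=0$, while $\chi|_\mathfrak{l}$ is the principal nilpotent character on $\mathfrak{l}$. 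By PBW, $Z^{\chi}(\lambda)\cong u(\mathfrak{u}^-)\otimes_{\mathbf{F}} S$ as $u(\mathfrak{u}^-)$-modules, where $S:=u(\mathfrak{l},\chi|_\mathfrak{l})v$ is the Levi baby Verma module with $\mathfrak{u}^+$ acting as zero on $v$ (and hence on $S$). Since $\mathfrak{u}^-$ is unipotent, Corollary~5.6 shows $u(\mathfrak{u}^-)$ is a local superalgebra with nilpotent maximal ideal $\mathfrak{m}$, and the classical Friedlander--Parshall theorem applied to the even reductive algebra $\mathfrak{l}$ gives that $S$ is a simple $u(\mathfrak{l},\chi|_\mathfrak{l})$-module.

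The plan is then to establish $V\subseteq W$ by working within the $H$-weight decomposition of $Z^{\chi}(\lambda)$ (available since $\chi(H)=0$): given $z=cv+w\in V$ with $c\in\mathbf{F}^\times$, project $z$ onto its $\lambda$-weight component; the resulting element is $v$ plus a sum of PBW monomials in $u(\mathfrak{N}^-,\chi)$ whose root-sums vanish in $H^*$. Each such monomial splits (after re-ordering) into a part from $\mathfrak{u}^-$, which is $p$-nilpotent and can be absorbed as in Lemma~2.5(2), and a part from $\mathfrak{l}$. The main obstacle is dealing with the mixed monomials: I would induct on the total $\mathfrak{u}^-$-length, using nilpotency of $\mathfrak{m}$ at each step to reduce the length and using the simplicity of $S$ under $u(\mathfrak{l},\chi|_\mathfrak{l})$ to eliminate the Levi part at each stage. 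Iterating yields $v\in V$, contradicting properness, so $V\subseteq W$ and the sum of all proper $\mathbf{Z}_2$-graded submodules is the unique maximal one.
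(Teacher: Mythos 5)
Your plan has the right ingredients (the Levi decomposition, the local algebra $u(\mathfrak{u}^-)$, the simplicity of $S$), but it goes wrong at the very first step, and the workaround that follows is too vague to repair it. The subspace you name as the trap, $W=u(\mathfrak{N}^-,\chi)\mathfrak{N}^- v$, is \emph{not} the PBW complement of $\mathbf{F}v$ once $I\neq\emptyset$: for $\alpha\in I$ the element $f_\alpha$ is a unit of $u(\mathfrak{N}^-,\chi)$ (as $f_\alpha^p=\chi(f_\alpha)^p\in\mathbf{F}^\times$), hence $u(\mathfrak{N}^-,\chi)\mathfrak{N}^-=u(\mathfrak{N}^-,\chi)$ and $W=Z^{\chi}(\lambda)$. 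The inclusion $V\subseteq W$ you set out to prove is therefore vacuous and gives no control over the sum of proper submodules. Even replacing $W$ by the genuine PBW complement $W'$ (the span of the nontrivial PBW monomials applied to $v$) does not obviously help: $W'$ is not a left ideal of $u(\mathfrak{N}^-,\chi)$ (indeed $f_\alpha^{p-1}\cdot f_\alpha\notin W'$), so there is no reason proper submodules should land in it. The ``absorb the $\mathfrak{u}^-$-part, kill the $\mathfrak{l}$-part'' induction you sketch never closes: simplicity of $S$ lets $u(\mathfrak{l},\chi|_{\mathfrak{l}})$ move $v$ around inside $1\otimes S$, but does not force a $v$-component of an element of $V$ to vanish, and you give no invariant that strictly decreases.

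The correct trap is $\mathfrak{m}Z^{\chi}(\lambda)$, where $\mathfrak{m}$ is the nilpotent maximal ideal of the local superalgebra $u(\mathfrak{u}^-)=u(\mathfrak{u}^-,0)$ (here Corollary 5.6 is used, as you observed). By PBW, $Z^{\chi}(\lambda)\cong u(\mathfrak{u}^-)\otimes S$ as $u(\mathfrak{u}^-)$-modules, with $S=u(\mathfrak{l},\chi|_{\mathfrak{l}})v$ simple because $\chi|_{\mathfrak{l}}$ is regular nilpotent on the even reductive $\mathfrak{l}$; moreover $\mathfrak{m}Z^{\chi}(\lambda)$ is an $(\mathfrak{l}+\mathfrak{u}^-)$-submodule with $Z^{\chi}(\lambda)/\mathfrak{m}Z^{\chi}(\lambda)\cong S$. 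If $V$ is any $(\mathfrak{l}+\mathfrak{u}^-)$-submodule with $V\not\subseteq\mathfrak{m}Z^{\chi}(\lambda)$, its image in $S$ is nonzero, hence all of $S$, so $V+\mathfrak{m}Z^{\chi}(\lambda)=Z^{\chi}(\lambda)$; since $\mathfrak{m}$ is nilpotent, iterating gives $V=V+\mathfrak{m}^N Z^{\chi}(\lambda)=Z^{\chi}(\lambda)$ (Nakayama). As every $\mathbf{Z}_2$-graded $u(\g,\chi)$-submodule is in particular an $(\mathfrak{l}+\mathfrak{u}^-)$-submodule, every proper one lies in $\mathfrak{m}Z^{\chi}(\lambda)$, so their sum is proper and is the unique maximal submodule. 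This is essentially the argument of Jantzen's Proposition 10.2, to which the paper defers; your ``induction on $\mathfrak{u}^-$-length'' is an attempt to reprove Nakayama by hand, but it neither identifies the right $u(\mathfrak{u}^-)$-stable subspace nor shows that the iteration terminates.
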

The proof is similar to that of \cite[Prop. 10.2]{j}.\par  Each
nilpotent p-character is conjugate under $G$ to the standard Levi
form. Let $\chi$ be in the standard Levi form and let $I$ be the set
of simple roots as above. For each $\a\in I$, let $\l(h_\a)=a$. Then
since $\l(h_{\a})^p-\l(h_{\a})=0$, we get $a\in \mathbf{F}_p$. Write
$0\leq a\leq p-1$.  Then it is easy to see that $f^{a+1}_{\a}v$ is
another maximal vector in $Z^{\chi}(\l)$ with weight $\l-(a+1)\a$.
So we get
$$ Z^{\chi}(\l-(a+1)\a)\cong Z^{\chi}(\l).$$ Recall $\rho$ defined
earlier. We define the dot action of $w\in W$ on $H^*$ by
$$
w\cdot \l=w(\l+\rho)-\rho.
$$
If $\a\in \Delta_0$ is simple, then we have $$s_{\a}\cdot
\l=\l-(\l(h_{\a})+1)\a.$$ Denote $W_I$ the subgroup of $W$ generated
by the reflections of $s_{\a}$ with $\a\in I$. Let $L^{\chi}(\l)$ be
the unique simple quotient of $Z^{\chi}(\l)$.
\begin{corollary}Let $w\in W_I$, then \par(1) $$Z^{\chi}(w\cdot\l)\cong Z^{\chi}(\l).$$
(2) $$L^{\chi}(w\cdot \l)\cong L^{\chi}(\l).$$
\end{corollary}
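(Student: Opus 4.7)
The plan is to prove (1) by induction on the length of $w$ as a word in the simple reflections $\{s_\alpha : \alpha \in I\}$, then deduce (2) from (1) using the uniqueness provided by Proposition 5.17. The base case is the simple-reflection statement already established in the discussion preceding the corollary, where for $\alpha \in I$ with $\lambda(h_\alpha) = a \in \{0,\dots,p-1\}$ the vector $f_\alpha^{a+1} v$ is shown to be a maximal vector of weight $\lambda - (a+1)\alpha = s_\alpha\cdot\lambda$, yielding $Z^\chi(s_\alpha\cdot\lambda)\cong Z^\chi(\lambda)$ (by a dimension count, since any nonzero map between baby Verma modules of the same dimension extending the inclusion of the maximal line must be surjective, hence an isomorphism).

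To iterate, I first observe that since $\chi$ is in standard Levi form, $\chi(\mathfrak{B})=0$ and in particular $\chi|_H=0$, so every weight $\mu\in H^*$ satisfying the $p$-character compatibility $\mu(h)^p-\mu(h)=\chi(h)^p$ has $\mu(h_\beta)\in\mathbf{F}_p$ for every $\beta\in I$. Thus the simple-reflection argument is not special to $\lambda$; it applies verbatim with $\lambda$ replaced by any such $\mu$, giving $Z^\chi(s_\beta\cdot\mu)\cong Z^\chi(\mu)$ for every $\beta\in I$. Writing $w = s_{\beta_1}\cdots s_{\beta_k}$ and setting $\mu_k:=\lambda$, $\mu_{i-1}:=s_{\beta_i}\cdot\mu_i$, each $\mu_i$ still satisfies the $p$-character condition (the dot action differs from the plain reflection by a translation by $\rho$, and preserves the compatibility when $\chi$ vanishes on $H$). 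Composing the isomorphisms $Z^\chi(\mu_{i-1})\cong Z^\chi(\mu_i)$ yields $Z^\chi(w\cdot\lambda)\cong Z^\chi(\lambda)$, proving (1).

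For (2), Proposition 5.17 guarantees that every $Z^\chi(\nu)$ has a unique maximal $\mathbf{Z}_2$-graded submodule, hence a unique simple quotient $L^\chi(\nu)$. Any isomorphism of $u(\g,\chi)$-modules carries the unique maximal submodule to the unique maximal submodule, so the isomorphism from (1) descends to $L^\chi(w\cdot\lambda)\cong L^\chi(\lambda)$.

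The only real subtlety lies in the inductive step, namely checking that the simple-reflection argument remains valid at each intermediate weight $\mu_i$; this amounts to noticing that the required $\mathbf{F}_p$-integrality of $\mu_i(h_{\beta_{i+1}})$ is automatic because $\chi(h_\beta)=0$ for every $\beta\in\Delta_0$ under the standard Levi assumption. Beyond this bookkeeping there is no genuine obstacle, since everything else is either a direct dimension count or an invocation of Proposition 5.17.
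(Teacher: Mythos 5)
Your proof is correct and follows the route the paper intends: iterate the simple-reflection isomorphism from the preceding discussion, noting that $\chi|_H=0$ in standard Levi form so the dot action preserves the $p$-character compatibility $\mu(h)^p-\mu(h)=\chi(h)^p$ at every intermediate step, then pass to the unique simple quotients via Proposition 5.17. One remark on the parenthetical in your base case: it is not true in general that a nonzero map between baby Verma modules of the same dimension which extends the inclusion of a maximal line is automatically surjective --- for nilpotent $\chi$ a baby Verma module may have several maximal vectors, some of which generate proper submodules. The surjectivity here hinges on the standard Levi condition $\chi(f_{\a})\neq 0$ for $\a\in I$: then $f_{\a}^p=\chi(f_{\a})^p\cdot 1$ acts invertibly in $u(\g,\chi)$, so $u(\g,\chi)f_{\a}^{a+1}v$ contains $f_{\a}^{p-a-1}f_{\a}^{a+1}v=\chi(f_{\a})^p v$ and hence equals $Z^{\chi}(\l)$. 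With this point made explicit, the argument is complete.
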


\def\refname{\centerline{\bf REFERENCES}}

\end{document}